\renewcommand{\gg}{\gamma}
\newcommand{\bG}{\mathbf{\Gamma}}
\newcommand{\bS}{\mathbf{\Sigma}}
\newcommand{\bd}{\mathbf{\delta}}
\newcommand{\bD}{\mathbf{\Delta}}
\newcommand{\bR}{{\mathbb{R}}}
\newcommand{\rest}{\restriction}
\newcommand{\card}[1]{{\vert #1 \vert} }
\newcommand{\forces}{\Vdash}
\renewcommand{\models}{\vDash}
\newcommand{\powerset}{{\wp}}
\newcommand{\dom}{{\rm dom}}
\newcommand{\rge}{{\rm rge}}
\newcommand{\ind}{{\rm ind}}
\newcommand{\cp}{{\rm crit }}
\newcommand{\lh}{{\rm lh}}
\newtheorem{theorem}{Theorem}[section]
\newtheorem{proposition}[theorem]{Proposition}
\newtheorem{definition}[theorem]{Definition}
\newtheorem{lemma}[theorem]{Lemma}
\newtheorem{corollary}[theorem]{Corollary}
\newtheorem{conjecture}[theorem]{Conjecture}
\newtheorem{sublemma}[theorem]{Sublemma}
\newtheorem{review}[theorem]{Review}
\numberwithin{figure}{section}
\newenvironment{proof}{{\it{
Proof.}}}{\nopagebreak\mbox{}{\hfill$\square$}
\par\bigskip}
\newcommand{\rcon}[1]{Conjecture~\ref{#1}}
\newcommand{\rprop}[1]{Proposition~\ref{#1}}
\newcommand{\rthm}[1]{Theorem~\ref{#1}}
\newcommand{\rlem}[1]{Lemma~\ref{#1}}
\newcommand{\rsublem}[1]{Sublemma~\ref{#1}}
\newcommand{\rcor}[1]{Corollary~\ref{#1}}
\newcommand{\rdef}[1]{Definition~\ref{#1}}
\newcommand{\rsubsec}[1]{Section~\ref{#1}}
\newcommand{\rsec}[1]{Section~\ref{#1}}
\newcommand{\rrev}[1]{Review~\ref{#1}}
\def\inseg{\trianglelefteq}
\def\k{\kappa}
\def\a{\alpha}
\def\b{\beta}
\def\d{\delta}
\def\s{\sigma}
\def\l{\lambda}
\def\c{{\sf{cop}}}
\def\P{{\mathcal{P} }}
\def\W{{\mathcal{W} }}
\def\Q{{\mathcal{ Q}}}
\def\mH{{\mathcal{ H}}}
\def\K{{\mathcal{ K}}}
\def\R{{\mathcal R}}
\def\X{{\mathbb X}}
\def\H{{\rm{HOD}}}
\def\M{{\mathcal{M}}}
\def\N{{\mathcal{N}}}
\def\T {{\mathcal{T}}}
\def\U{{\mathcal{U}}}
\def\S{{\mathcal{S}}}
\def\V{{\mathcal{V}}}
\def\X{{\mathcal{X}}}
\def\Y{{\mathcal{Y}}}
\def\card#1{\left|#1\right|}
\def\iff{\mathrel{\leftrightarrow}}
\def\and{\mathrel{\kern1pt\&\kern1pt}}
\def\inseg{\triangleleft}
\def\insegeq{\trianglelefteq}
\def\<#1>{\langle\,#1\,\rangle}
\title{Hjorth's reflection argument}
\author{Grigor Sargsyan\\
Institute of Mathematics\\
Polish Academy of Sciences\\
impan.pl/$\sim$gsargsyan}
\date{\today}
\begin{document}
\maketitle
\begin{abstract} In \cite{Hjorth}, Hjorth, assuming ${\sf{AD+ZF+DC}}$, showed that there is no sequence of length $\omega_2$ consisting of distinct $\bS^1_2$-sets. We show that the same theory implies that for $n\geq 0$, there is no sequence of length $\bd^1_{2n+2}$ consisting of distinct $\bS^1_{2n+2}$ sets. The theorem settles Question 30.21 of \cite{Kanamori}, which was also conjectured by Kechris in \cite{Kechris} (see Conjecture in Chapter 4 of \cite{Kechris} and the last paragraph of Chapter 4 of \cite{Kechris}). 
\end{abstract}

A central theme in descriptive set theory is the study of the complexity of various natural processes in terms of their ordinal lengths. In this line of thought, it is often shown that some ordinal is unreachable via processes of certain complexity. For example, there is no analytic well-founded relation of length $\omega_1$, and so $\omega_1$ is inaccessible with respect to analytic surjections with domain $\bR$. 

One way to study the complexity of a definability class $\bG$ is to seek definable ways of assigning sets from $\bG$ to ordinals in such a way that no two ordinals are assigned to the same set. For example, if $\a$ is a countable ordinal then we can assign to $\a$ the set $A_\a\subseteq \bR$ consisting of those reals that code $\a$ in some natural way.  Each $A_\a$ is Borel and clearly the assignment $\a\mapsto A_\a$ is definable. However, a remarkable theorem of Harrington (see \cite[Theorem 4.5]{Harrington}) says that such an assignment cannot exist if we further demand that each set comes from a specific Borel class.

Below ${\sf{AD}}$ is the ${\sf{Axiom\ of\ Determinacy}}$, ${\sf{ZF}}$ are the axioms of the Zermelo-Fraenkel set theory (which does not include the ${\sf{Axiom\ of\ Choice}})$ and ${\sf{DC}}$ is the axiom of dependent choice. 
\begin{theorem}[Harrington] Assume ${\sf{AD+ZF+DC}}$. If $\b<\omega_1$ then there is no injection $f:\omega_1\rightarrow \mathbf{\Pi}^0_\b$. 
\end{theorem}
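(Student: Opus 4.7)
Plan: My plan is to derive a contradiction from a hypothetical injection $f:\omega_1\to\mathbf{\Pi}^0_\beta$ by producing an injection $g:\omega_1\to\mathbb{R}$. Such a $g$ cannot exist under $\sf AD$: the range of $g$ would be an uncountable set of reals, and by the perfect set property (which holds for every set of reals under $\sf AD$) it would contain a perfect subset $P$ of cardinality $2^{\aleph_0}$. Since $P\subseteq\mathrm{ran}(g)$, composing with $g^{-1}$ gives an injection $P\hookrightarrow\omega_1$, forcing $2^{\aleph_0}\le\aleph_1$ and hence a wellordering of $\mathbb{R}$, contradicting $\sf AD$.

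To build $g$ from $f$, fix a universal $\mathbf{\Pi}^0_\beta$ set $U\subseteq\mathbb{R}^2$ whose sections $U_x=\{y:(x,y)\in U\}$ enumerate $\mathbf{\Pi}^0_\beta$. Let $X=\{x\in\mathbb{R}:U_x\in\mathrm{ran}(f)\}$ be the set of codes for sets in the range of $f$, with the natural surjection $h:X\twoheadrightarrow\mathrm{ran}(f)$, $h(x)=U_x$. Since $\mathrm{ran}(f)$ is uncountable, $X$ must be uncountable too — otherwise $\mathrm{ran}(f)=h[X]$ would itself be countable, contradicting injectivity of $f$ on $\omega_1$. The problem then reduces to producing a section $s:\mathrm{ran}(f)\to X$ with $h\circ s=\mathrm{id}$; then $g:=s\circ f:\omega_1\to\mathbb{R}$ is the desired injection.

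The main obstacle is this selection step. In $\sf ZF$ alone it requires a choice principle (essentially $\sf AC_{\omega_1}$ for families of sets of reals) that $\sf AD+DC$ does not supply. My plan is to invoke the Moschovakis Coding Lemma, applied to the length-$\omega_1$ prewellordering of $X$ induced by $f$ via $h$, to produce a definable set of reals encoding the required selector. The subtlety I expect to be the real difficulty is that this prewellordering depends on the arbitrary $f$ and is not a priori definable. I would handle this either by passing to $L(\mathbb{R})$, where every set of reals is definable from a real and an ordinal and so $f$ becomes amenable to the Coding Lemma, or by a reflection-style maneuver in which, for each $\alpha<\omega_1$, one passes to an inner model where $f\restriction\alpha$ is coded by a real and the restricted prewellordering becomes definable, and then patches the local selectors $s_\alpha$ into a global $s$. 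Either way, it is this definability hurdle, rather than the combinatorics, where the full power of $\sf AD$ over $\sf ZF+DC$ must be brought to bear.
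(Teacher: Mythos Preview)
The paper does not prove this theorem; it is stated with attribution to Harrington and a reference to \cite[Theorem 4.5]{Harrington}, so there is no in-paper argument to compare against.

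Your overall strategy --- reduce to the nonexistence of an injection $\omega_1 \to \mathbb{R}$ --- is sound, and you correctly locate the difficulty at the selection step. But neither proposed fix closes the gap. Passing to $L(\mathbb{R})$ does not help: the hypothetical $f$ lives in $V$, and nothing forces it (or any injection $\omega_1 \to \mathbf{\Pi}^0_\beta$) to lie in $L(\mathbb{R})$; proving the theorem inside $L(\mathbb{R})$ does not yield it in $V$. The Coding Lemma, even granting that under ${\sf AD}$ every prewellordering sits in \emph{some} adequate pointclass, produces only a choice \emph{set} $C^*$ meeting every fiber $h^{-1}(f(\alpha))$, not a choice function; you are then left with $\omega_1$ pairwise disjoint nonempty subsets of $\mathbb{R}$, and selecting a point from each is precisely the $\omega_1$-choice you were trying to avoid. (That configuration is not by itself contradictory under ${\sf AD}$: the sets $\{w \in WO : |w| = \alpha\}$ already partition $WO$ into $\omega_1$ nonempty pieces.) Your reflection-and-patching sketch has the same defect --- gluing local selectors $s_\alpha$ into a global $s$ again requires $\omega_1$ many choices of reals. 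What would actually finish your argument is a definable transversal for the equivalence ``$U_x = U_y$'' on $\mathbf{\Pi}^0_\beta$-codes, i.e., a canonical code for each $\mathbf{\Pi}^0_\beta$ set, and that is essentially the substance of Harrington's theorem rather than something the Coding Lemma supplies.
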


Notice that the content of Harrington's theorem isn't that $\card{\mathbf{\Pi}^0_\b}<\omega_1$, which is in fact not true as $\mathbf{\Pi}^0_\b$ has continuum many distinct sets. The content of Harrington's theorem is that if we fix $\b<\omega_1$ and devise an algorithm that picks a new set from the Borel class $\mathbf{\Pi}^0_\b$ then at some stage less than $\omega_1$ our algorithm will stop outputting anything\footnote{Harrington's Theorem can also be viewed as an effective cardinality theorem. This type of questions have been investigated in \cite{Hjorth_2002} and \cite{ANDHJONEE}.}. 

Harrington's theorem was recently used by Marks and Day to prove the decomposability conjecture (see \cite{Marks}). 
\begin{definition}
Suppose $\bG\subseteq \powerset(\bR)$. We say $\k$ is $\bG$-reachable if there is an injection $f:\kappa\rightarrow \bG$, and that $\k$ is $\bG$-unreachable if it is not $\bG$-reachable. 
\end{definition}
 Let $\Theta$ be the least ordinal that is not a surjective image of $\bR$. Then in $L(\bR)$, $\Theta$ is $\powerset(\bR)$-reachable. Indeed, in $L(\bR)$, if $\a<\Theta$ then there is a pre-well-ordering (pwo\footnote{$\leq^*$ is a pwo if $\leq^*$ is a well-founded relation such that for every $x, y\in \dom(\leq^*)$, either $x\leq^* y$ or $y\leq^* x$.}) $\leq^*$ of $\bR$ that is ordinal definable and has length $\a$. We can then let $\leq^*_\a$ be the least ordinal definable pwo of $\bR$ that has length $\a$. Thus we have assigned an ordinal definable pwo of length $\a$ to each $\a$ in an ordinal definable manner. 

The above construction has a well-known Harrington-type analogue. Assume ${\sf{AD}}$. If $\b<\Theta$ and $\bG$ consists of those sets of reals whose Wadge rank\footnote{E.g. \cite[Theorem 29.16]{Kanamori} or \cite[Chapter 2.3]{Jackson}.} is $\leq \b$ then $\Theta$ is $\bG$-unreachable. This is because Wadge's lemma\footnote{See \cite[Lemma 29.15]{Kanamori}.} implies that there is a surjection $f:\bR\rightarrow \bG$, so if $\Theta$ was $\bG$-reachable then we could find a surjection $g:\bR\rightarrow \Theta$. 

Perhaps the most natural way of showing that $\k$ is $\bG$-reachable is to find a surjection $f: \bR\rightarrow \k$ such that for each $\a<\k$, $A_\a=\{ x\in \bR: f(x)<\a\}\in \bG$. In this case, the sets $(A_\a: \a<\k)$ form a strictly $\subset$-increasing sequence. Thus, the fact that $\k$ is not $\bG$-reachable via a strictly $\subset$-increasing sequence implies that $\kappa$ is inaccessible with respect to $\bG$-surjections. 

An equivalence relation on $\bR$ is called \textit{thin} if it does not have a perfect set of inequivalent elements. Assuming ${\sf{AD}}$ holds in $L(\bR)$, or in fact just ${\sf{AD^+}}$, Woodin (see \cite{CaicedoKetchersid},  \cite{ChanDet} and \cite[Theorem 0.3]{Dichotomy}), generalizing Harrington's earlier result on $\Pi^1_2$-equivalence relations, showed that if $E\subseteq \bR^2$ is a thin equivalence relation then the set $\{[x]_E: x\in \bR\}$ is well-orderable. Thus, assuming ${\sf{AD^+}}$, if $\k$  is $\bG$-unreachable than any thin equivalence relation $E\in \powerset(\bR^2)\cap \bG$ has $<\k$ many equivalence classes. 

 Below $\mathbf{\delta}^1_{n}$ is the supremum of the lengths of $\mathbf{\Delta}^1_n$ pre-well-orderings of $\bR$. In a seminal work, Jackson, building on an early work of Kechris, Kunen, Martin, Moschovakis and Solovay computed $\mathbf{\delta}^1_n$. Before Jackson's work, it was known that assuming ${\sf{AD}}^{L(\bR)}$, $\delta^1_1=\aleph_1$, $\delta^1_2=\aleph_2^{L(\bR)}$, 
 $\delta^1_3=\aleph_{\omega+1}^{L(\bR)}$, $\delta^1_4=\aleph_{\omega+2}^{L(\bR)}$, for every $n$, $\delta^1_{2n+2}=((\delta^1_{2n+1})^+)^{L(\bR)}$, for every $n$, $\delta^1_{2n+1}$ is itself a successor cardinal of $L(\bR)$ and $\bS^1_{2n+2}$ is exactly the collection of $\d^1_{2n+1}$-Suslin sets\footnote{See \cite[Theorem 2.18, Chapter 3]{Jackson}.}. In \cite{delta15}, Jackson computed the remaining $\delta^1_n$'s and in particular, showed that $\delta^1_5=\aleph_{\omega^{\omega^\omega}+1}^{L(\bR)}$.
 
 In \cite{Kechris}, Kechris proved a partial generalization to Harrington's theorem\footnote{As pointed out by the referee, the theorems and conjectures stated below do not need ${\sf{DC}}$. See, for example, \cite{ADDC}. However, the original versions of these results assumed ${\sf{DC}}$, and so we also assume it.}.
\begin{theorem}[Kechris] Assume ${\sf{ZF+AD+DC}}$. Then $\delta^1_{2n+2}$ is $\mathbf{\Delta}^1_{2n+1}$-unreachable.
\end{theorem}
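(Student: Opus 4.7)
The plan is to argue by contradiction using the Kunen-Martin theorem together with the Moschovakis coding lemma. Assume $f:\delta^1_{2n+2}\to\mathbf{\Delta}^1_{2n+1}$ is an injection, with $A_\alpha=f(\alpha)$. Since $\mathbf{\Delta}^1_{2n+1}\subseteq\bS^1_{2n+2}$ and the introduction records that $\bS^1_{2n+2}$ coincides with the class of $\delta^1_{2n+1}$-Suslin sets, the Kunen-Martin theorem bounds the rank of any $\bS^1_{2n+2}$ well-founded relation on $\bR$ strictly below $(\delta^1_{2n+1})^+=\delta^1_{2n+2}$. It therefore suffices to extract, from $f$, a $\bS^1_{2n+2}$ well-founded relation on $\bR$ of rank $\delta^1_{2n+2}$.

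Fix a universal $\bS^1_{2n+1}$ set $U$, so that every $\mathbf{\Delta}^1_{2n+1}$ set appears as $U_a$ for some code $a\in\bR$. The naive candidate $aRb \iff U_a,U_b\in\mathbf{\Delta}^1_{2n+1}$ and $f^{-1}(U_a)<f^{-1}(U_b)$ is a well-founded relation on codes of rank exactly $\delta^1_{2n+2}$, but it depends on the arbitrary injection $f$ and so is typically not in $\bS^1_{2n+2}$. The central step of the proof is to replace it by a definable relation of the same rank.

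For this I would apply the Moschovakis coding lemma level by level. For each $\xi<\delta^1_{2n+1}$, fix a $\mathbf{\Delta}^1_{2n+1}$-pwo $\leq^*_\xi$ of length $\xi$; the coding lemma then produces in $\bS^1_{2n+1}$ a set which ``picks codes'' in $U$ for any $\xi$-indexed family of sets in $\mathbf{\Delta}^1_{2n+1}$, and in particular for the initial segment $(A_\beta : \beta\leq\xi)$. The resulting level-$\xi$ codings amalgamate, via an existential quantifier over real codes of $\xi$, into a single $\bS^1_{2n+2}$ relation which ``sees'' the whole ordering of the range of $f$. Kunen-Martin, applied to this amalgamated relation, then yields the desired contradiction.

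The main obstacle is the rank bookkeeping inside the amalgamation. Individual level-$\xi$ codings only reach rank $\xi<\delta^1_{2n+1}$, and a naive disjoint union across $\xi<\delta^1_{2n+1}$ only reaches rank at most $\delta^1_{2n+1}\cdot\delta^1_{2n+1}<\delta^1_{2n+2}$, which is not enough. The amalgamation therefore must be carefully interlocked, feeding cofinal tails of $(A_\alpha)$ into successive levels so that the ranks achieved are cofinal in $\delta^1_{2n+2}$, while still keeping the relation in $\bS^1_{2n+2}$. The delicate point is that the coding lemma produces codings at each level separately, with no a priori coherence between levels; the resolution exploits that one only needs a well-founded relation and not a genuine well-ordering, so repeated codes across different levels are harmless provided the global rank is right.
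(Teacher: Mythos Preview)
The paper does not prove this theorem; it is cited as a result of Kechris without proof. So there is no ``paper's approach'' to compare against, and I evaluate your argument on its own.

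Your high-level ingredients—Kunen--Martin plus the Coding Lemma—are the right ones, but the execution has a genuine gap, one you yourself flag and do not close. The problem is the choice of prewellorderings. You apply the Coding Lemma level by level to $\mathbf{\Delta}^1_{2n+1}$ pwo's of length $\xi<\delta^1_{2n+1}$ and then try to amalgamate over $\xi$. As you observe, this reaches rank at most $\delta^1_{2n+1}\cdot\delta^1_{2n+1}<\delta^1_{2n+2}$, and your ``interlocking'' suggestion does not explain how to climb higher. The obstruction is not cosmetic: with $\delta^1_{2n+1}$ many levels, each handling fewer than $\delta^1_{2n+1}$ of the sets $A_\alpha$, you simply cannot touch all $\delta^1_{2n+2}$ of them, let alone arrange them into a relation of rank $\delta^1_{2n+2}$. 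No rearrangement of codings from pwo's of length $<\delta^1_{2n+1}$ will manufacture a well-founded relation of rank $(\delta^1_{2n+1})^+$.

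The fix is to use a single pwo of the right length. Under ${\sf AD}$, $\bS^1_{2n+2}$ has the prewellordering property, so there is a $\bS^1_{2n+2}$-norm $\psi$ on a universal $\bS^1_{2n+2}$ set $P$, and this norm has length exactly $\delta^1_{2n+2}$, with strict norm relation $<^*_\psi$ in $\bS^1_{2n+2}$. Apply the Coding Lemma once, with $\Gamma=\bS^1_{2n+2}$, to the relation $R(x,a)\iff x\in P\wedge U_a=A_{\psi(x)}$, obtaining a choice set $C\in\bS^1_{2n+2}$. Because the $A_\alpha$ are pairwise distinct, $C(x,a)\wedge C(x',a)$ forces $\psi(x)=\psi(x')$; hence
\[
a\prec a'\iff\exists x,x'\,\bigl(C(x,a)\wedge C(x',a')\wedge x<^*_\psi x'\bigr)
\]
is a well-founded $\bS^1_{2n+2}$ relation of rank $\delta^1_{2n+2}$, and Kunen--Martin yields the contradiction. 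In short, the pwo you need lives one level higher in the projective hierarchy than where you were looking.
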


Moreover, in \cite{Kechris}, Kechris showed that there is no $f:\mathbf{\delta}^1_{2n+2}\rightarrow \bS^1_{2n+2}$ such that for all $\a, \b<\d^1_{2n+2}$, if $\a\not =\b$ then $f(\a)\cap f(\b)=\emptyset$. In Chapter 4 of \cite{Kechris}, Kechris conjectured that in fact a general form of Harrington-type theorem is true for projective pointclasses.

\begin{conjecture}[Kechris 1st Conjecture] Assume ${\sf{ZF+AD+DC}}$. Then $\bd^1_{2n+2}$ is $\bD^1_{2n+2}$-unreachable.
\end{conjecture}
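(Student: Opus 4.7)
The plan is to prove the stronger statement announced in the abstract, namely that $\bd^1_{2n+2}$ is $\bS^1_{2n+2}$-unreachable, which implies the conjecture since $\bD^1_{2n+2}\subseteq \bS^1_{2n+2}$. The overall strategy is to generalize Hjorth's reflection argument for the base case $n=0$, in which $\bS^1_2$-sets are represented as projections of trees on $\omega\times\omega_1$, by replacing $\omega_1$ with $\d^1_{2n+1}$ and exploiting the fact, recalled in the introduction, that $\bS^1_{2n+2}$ is exactly the class of $\d^1_{2n+1}$-Suslin sets.

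Concretely, I would fix a putative injection $f:\bd^1_{2n+2}\to\bS^1_{2n+2}$, and for each $\alpha<\bd^1_{2n+2}$ choose a tree $T_\alpha$ on $\omega\times \d^1_{2n+1}$ with $f(\alpha)=p[T_\alpha]$. The Moschovakis Coding Lemma, applied at the Suslin cardinal $\d^1_{2n+1}$, yields a code, in a sufficiently high pointclass, for the whole sequence $\vec T=(T_\alpha:\alpha<\bd^1_{2n+2})$. Since $\bd^1_{2n+2}=(\d^1_{2n+1})^+$ in $L(\bR)$, we are trying to compress a $(\d^1_{2n+1})^+$-long sequence of distinct projections into data naturally indexed at the $\d^1_{2n+1}$-level, and the sought contradiction must emerge from this cardinality mismatch. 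The reflection step would take an ultrapower by a carefully chosen measure $\mu$ on $\d^1_{2n+1}$ --- for $n=0$ the club measure on $\omega_1$ suffices, while for $n\geq 1$ one is pushed into Jackson's weak-partition measures --- and invoke a Shoenfield-style absoluteness for Suslin representations to argue that the images of the $T_\alpha$'s still have pairwise distinct projections in the ultrapower. Since the ultrapower carries at most $\d^1_{2n+1}$ many distinct $\bS^1_{2n+2}$-sets in its internal sense, injectivity must fail there, producing the desired contradiction.

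The main obstacle is calibrating the reflection at levels $n\geq 1$. Hjorth's original argument exploits the comparatively simple measure theory of $\omega_1$ under ${\sf{AD}}$, essentially just the club filter, which makes the preservation of distinctness of projections under the ultrapower almost automatic. At $\d^1_{2n+1}$ the natural measures arise from Jackson's descriptions, and arranging that the ultrapower both strictly shortens the sequence below $\bd^1_{2n+2}$ \emph{and} preserves distinctness of the $p[T_\alpha]$'s is where the real work lies. I expect the crucial technical step to be an averaging argument using the strong partition property of $\d^1_{2n+1}$ to reduce matters to a ``$\d^1_{2n+1}$-side'' counting statement, which can then be refuted either directly from the Suslin representation or by appealing to Kechris's theorem that $\bd^1_{2n+2}$ is $\bD^1_{2n+1}$-unreachable.
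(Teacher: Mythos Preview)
Your proposal has a genuine gap and also misidentifies Hjorth's method.

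First, Hjorth's argument for $n=0$ is not a tree/ultrapower argument at all. It is inner-model-theoretic: one works with iterates of $\M_1(z)$, the extender algebra at a Woodin cardinal, and a reflection of forcing conditions below a strong cardinal. The paper's proof of the stronger $\bS^1_{2n+2}$-unreachability (which is how it obtains the conjecture) is a direct generalization of \emph{that} machinery to $\M_{2n+1}(z)$: the directed system of mice, $\gamma$-stable iterates, $\Pi^1_{2n+2}$-iterability, and a careful choice of extender algebras over $fb$-cuts. The new idea here is showing that the Kechris--Martin theorem, which Hjorth used at $n=0$ and which has no known analogue at higher levels, can be eliminated by encoding each $A_\gamma$ via a pair $(p_{\gamma,\infty},e_{\gamma,\infty})$ in the direct limit and bounding the image by a $\Sigma^1_{2n+2}$ prewellordering (Kunen--Martin). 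None of this resembles an ultrapower by a measure on $\d^1_{2n+1}$.

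Second, your sketch is not a proof even in outline. The step ``the ultrapower carries at most $\d^1_{2n+1}$ many distinct $\bS^1_{2n+2}$-sets in its internal sense, so injectivity fails there'' is circular: that bound is precisely the statement you are trying to establish, and an ultrapower of $V$ by a measure on $\d^1_{2n+1}$ does not shorten a $\bd^1_{2n+2}$-sequence --- the embedding moves $\bd^1_{2n+2}$ upward. You acknowledge that ``arranging that the ultrapower both strictly shortens the sequence \emph{and} preserves distinctness is where the real work lies,'' but you supply no mechanism for either. Jackson's partition-property methods do yield $\bD^1_{2n+2}$-unreachability (as the paper records), but extending them to $\bS^1_{2n+2}$ was open for decades; your proposal does not indicate what would bridge that gap, and the paper's resolution goes through an entirely different, inner-model-theoretic route.
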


In \cite{JacksonPart}, Jackson proved Kechris' 1st Conjecture by establishing the following remarkable theorem (see \cite[Corollary 4.5]{JacksonPart}).
\begin{theorem}[Jackson]\label{jackson} Assume ${\sf{ZF+AD+DC}}$. Then $\d^1_{2n+2}$ is $\bD^1_{2n+2}$-unreachable. 
\end{theorem}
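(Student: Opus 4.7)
The plan is to generalize Hjorth's reflection argument for $\bS^1_2$ sets at $\omega_2$ to arbitrary projective levels, substituting the combinatorics of $\omega_1$ under ${\sf{AD}}$ by the considerably more delicate description theory at $\bd^1_{2n+1}$ developed by Jackson. Suppose for contradiction that $f:\bd^1_{2n+2}\to \bD^1_{2n+2}$ is an injection.

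First, by Moschovakis' periodicity theorems, $\bS^1_{2n+2}$ has the scale property and so every $\bS^1_{2n+2}$-set is $\bd^1_{2n+1}$-Suslin. Hence every $\bD^1_{2n+2}$-set $A$ admits an ${\sf{OD}}$-canonical pair of trees $(T^A,U^A)$ on $\omega\times \bd^1_{2n+1}$ with $p[T^A]=A$ and $p[U^A]=\bR\setminus A$. This turns $f$ into an injection $F:\a\mapsto (T_\a,U_\a)$ of $\bd^1_{2n+2}$ into the set $\T$ of such pairs. Second, after fixing a definable bijection of $\omega^{<\omega}\times\bd^1_{2n+1}^{<\omega}$ with $\bd^1_{2n+1}$, each member of $\T$ is coded by a function $h:\bd^1_{2n+1}\to 2$, so $F$ becomes an injection $\a\mapsto h_\a\in 2^{\bd^1_{2n+1}}$. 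Third, one applies Jackson's strong partition property $\bd^1_{2n+1}\to(\bd^1_{2n+1})^{\bd^1_{2n+1}}$ together with the induced Martin-style measures on $\bd^1_{2n+2}$ to carry out a pressing-down: the aim is to produce a measure-one set $S\of \bd^1_{2n+2}$ and an ordinal $\d<\bd^1_{2n+1}$ such that $h_\a\restriction \d$ already determines $h_\a$ for every $\a\in S$. Because the range of the map $\a\mapsto h_\a\restriction \d$ cannot inject $\bd^1_{2n+2}$ (its image being essentially a set of trees of height $<\bd^1_{2n+1}$), two distinct $\a,\a'\in S$ must be sent to the same restriction, yielding $h_\a=h_{\a'}$ and contradicting injectivity.

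The expected main obstacle is the reflection carried out in the third step. In Hjorth's original argument for $n=0$, the pressing-down exploits a very concrete understanding of the club measure on $\omega_1$ and of Kunen's analysis of the ultrapower $\omega_1^{\omega_1}/W^1_1$, together with the observation that each canonical Kunen/Shoenfield tree $T^A$ arises from a definable procedure and therefore reflects below a club of $\a<\omega_1$. For general $n$, the analogous step demands Jackson's description theory: the analysis of $\bd^1_{2n+1}^{\bd^1_{2n+1}}$ modulo the Martin measures, weak square at $\bd^1_{2n+1}$, and the identification of a definable reflection point for the trees $T^A$ furnished by the scale. Once this apparatus is in place, the remainder of the proof proceeds essentially along the same template as in Hjorth's original argument.
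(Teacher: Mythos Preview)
Your proposal is not a proof but a sketch whose decisive step is left unargued, and it also mischaracterizes Hjorth's argument.

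The entire content lies in your ``pressing-down'' step: you assert that the strong partition relation $\bd^1_{2n+1}\to(\bd^1_{2n+1})^{\bd^1_{2n+1}}$ together with ``Martin-style measures on $\bd^1_{2n+2}$'' produces a measure-one set $S$ and some $\d<\bd^1_{2n+1}$ on which $h_\a\rest\d$ determines $h_\a$. You give no mechanism for this. A function $h_\a\colon\bd^1_{2n+1}\to 2$ is not itself an input to the strong partition relation, and there is no evident regressive function here to press down on. Jackson's actual proof in \cite{JacksonPart} does use the strong partition property of $\bd^1_{2n+1}$, but the argument is a delicate analysis of how scales and the description theory interact with the partition calculus; simply naming the ingredients does not constitute the argument. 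As written, the step ``$h_\a\rest\d$ determines $h_\a$'' is a restatement of what you want to prove, not a reason for it.

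You also describe Hjorth's $n=0$ argument as resting on the club measure on $\omega_1$ and Kunen's analysis of $\omega_1^{\omega_1}/W^1_1$. That is not what happens in \cite{Hjorth}: Hjorth's proof is inner-model-theoretic, using iterates of $\M_1$, the extender algebra, and a reflection of conditions below a strong cardinal (together with Kechris--Martin). The present paper follows that route, proving the stronger $\bS^1_{2n+2}$-unreachability of $\bd^1_{2n+2}$ via directed systems of iterates of $\M_{2n+1}$, the extender algebra, and a generalized form of Hjorth's reflection, with the main new point being the elimination of Kechris--Martin. Jackson's $\bD^1_{2n+2}$ statement then falls out trivially since $\bD^1_{2n+2}\subseteq\bS^1_{2n+2}$. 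So the paper's approach and yours are entirely disjoint, and yours in its current form has a gap exactly at the one place where genuine work is required.
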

Jackson's proof used his computation of the projective ordinals. In particular, that $\bd^1_{2n+1}$ has the strong partition property. In the last paragraph of \cite[Chapter 4]{Kechris}, Kechris made the following stronger conjecture.

\begin{conjecture}[Kechris 2nd Conjecture] Assume ${\sf{ZF+AD+DC}}$. Then $\bd^1_{2n+2}$ is $\bS^1_{2n+2}$-unreachable.
\end{conjecture}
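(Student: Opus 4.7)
Let $\kappa=\bd^1_{2n+1}$, so $\bd^1_{2n+2}=\kappa^+$ in $L(\bR)$, $\bS^1_{2n+2}$ is exactly the class of $\kappa$-Suslin sets, and $\kappa$ carries the strong partition property $\kappa\to(\kappa)^\kappa_\lambda$ for every $\lambda<\kappa$. Supposing for contradiction that $f:\bd^1_{2n+2}\to \bS^1_{2n+2}$ is an injection, I would generalize Hjorth's $n=0$ argument in three steps: (i) assign $\alpha\mapsto T_\alpha$, a canonical tree on $\omega\times\kappa$ projecting onto $f(\alpha)$, in an OD fashion; (ii) use Kunen's and Jackson's identification of $\kappa^+$ with a strong-partition ultrapower $\kappa^\kappa/\mu$ to represent $\alpha=[g_\alpha]_\mu$; (iii) apply the strong partition relation on $\kappa$ to locate distinct $\alpha,\beta$ whose trees coincide, contradicting injectivity.

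For (i), I would invoke Moschovakis' third periodicity theorem (giving $\mathbf{\Pi}^1_{2n+1}$ the scale property) together with the Moschovakis coding lemma to obtain an OD assignment $A\mapsto T(A)$ with $p[T(A)]=A$ for every $A\in\bS^1_{2n+2}$; composing with $f$ and coding trees as subsets of $\kappa$ via a fixed OD bijection $\kappa\leftrightarrow\omega^{<\omega}\times\kappa^{<\omega}$ produces an OD injection $\alpha\mapsto X_\alpha\in\powerset(\kappa)$. For (ii), the relevant $\mu$ is the partition measure on $[\kappa]^\kappa$ concentrated on increasing functions of uniform cofinality $\omega$, for which $\kappa^\kappa/\mu\simeq\kappa^+$ by Jackson's computation; this converts $\alpha\mapsto X_\alpha$ into a $\mu$-almost-everywhere defined $G:\kappa^\kappa\to\powerset(\kappa)$.

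Step (iii) is the heart of the matter. The goal is to design an OD coloring of $[\kappa]^\kappa$ whose range has size $<\kappa$ and whose $\kappa$-homogeneous set produces two representatives $g_\alpha,g_\beta$ with $X_\alpha=X_\beta$; a natural choice is $c(g)=X_{[g]_\mu}\cap\xi(g)$ for a threshold $\xi(g)<\kappa$ extracted from $g$ (for example $\xi(g)=g(\omega)$), using OD-definability of $\alpha\mapsto X_\alpha$ to guarantee that the coloring itself is OD. The main obstacle is the transition from $\bD^1_{2n+2}$ (handled by Jackson's Theorem~\ref{jackson}, where both $\Sigma$- and $\Pi$-tree representations feed into the partition coloring) to the full pointclass $\bS^1_{2n+2}$, where only the single canonical tree from the scale is available. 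Hjorth's key insight at the bottom level $n=0$ is that, nevertheless, a single canonical tree together with strong partition homogeneity on $\kappa$ and the OD-definability of $\alpha\mapsto T_\alpha$ still forces the required collision; pushing this through at higher $n$ will require invoking Jackson's description theory for the projective scales at level $\mathbf{\Pi}^1_{2n+1}$ to guarantee that the resulting coloring really does fall within the complexity to which the strong partition relation applies, and verifying this compatibility is where I expect the technical core of the paper to lie.
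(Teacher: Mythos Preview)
Your proposal takes a completely different route from the paper, and it contains a genuine gap at exactly the point you flag as ``the technical core.''

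The paper's proof is entirely inner-model-theoretic. It uses directed systems of iterates of $\M_{2n+1}$, the extender algebra, genericity iterations, $\Pi^1_{2n+2}$-iterability, and Hjorth's reflection argument (Lemma~\ref{hjorths trick}), which is a forcing argument over iterates of mice, not a partition-calculus argument. The paper's new contribution is to remove the dependence on the Kechris--Martin theorem from Hjorth's $n=0$ proof, which is what had blocked the generalization. There is no use of the strong partition property anywhere in the argument.

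Your proposal, by contrast, is a partition-calculus approach along the lines Jackson used for the weaker $\bD^1_{2n+2}$ result (Theorem~\ref{jackson}) and Jackson--Martin used for monotone $\bS^1_{2n+2}$ sequences (Theorem~\ref{jacksonmartin}). The gap is in step~(iii): you have not explained how a \emph{single} tree $T_\alpha$ (as opposed to a pair of trees witnessing $\bD^1_{2n+2}$-membership, or the monotonicity hypothesis) lets you design a coloring whose homogeneity forces $A_\alpha=A_\beta$. Your suggested coloring $c(g)=X_{[g]_\mu}\cap\xi(g)$ only records an initial segment of the tree; agreement of initial segments of trees on $\omega\times\kappa$ says nothing about equality of their projections, so constancy of $c$ on a homogeneous set does not yield $X_\alpha=X_\beta$, let alone $A_\alpha=A_\beta$. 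This is precisely the obstruction that left the $\bS^1_{2n+2}$ case open after Jackson's and Jackson--Martin's work; your sketch names the obstacle but does not overcome it.

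Finally, you have misread Hjorth's $n=0$ argument. It is not a partition argument with ``a single canonical tree together with strong partition homogeneity''; it is an inner-model argument using $\M_1$, the extender algebra, and the Kechris--Martin theorem. So the sentence ``Hjorth's key insight at the bottom level $n=0$ is that, nevertheless, a single canonical tree together with strong partition homogeneity\ldots'' does not describe anything in \cite{Hjorth}, and hence does not supply the missing idea for step~(iii).
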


This was partially resolved by Jackson and Martin who showed the following (see the Theorem on Page 84 of \cite{JacksonPart}).
\begin{theorem}[Jackson-Martin]\label{jacksonmartin} Assume ${\sf{ZF+AD+DC}}$. Then there is no strictly $\subset$-increasing or $\subset$-decreasing sequence $(A_\a:\a<\bd^1_{2n+2})\subseteq \bS^1_{2n+2}$. 
\end{theorem}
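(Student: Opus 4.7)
My plan is to derive a contradiction in the strictly $\subset$-increasing case; the decreasing case is symmetric, using that $\mathbf{\Sigma}^1_{2n+2}$ is also closed under well-ordered intersections of length less than $\mathbf{\delta}^1_{2n+2}$ (via the product tree construction for $\delta^1_{2n+1}$-Suslin sets).

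First I normalize the sequence to be continuous at limits. Since $\mathbf{\Sigma}^1_{2n+2}$ coincides with the pointclass of $\delta^1_{2n+1}$-Suslin sets and $\mathbf{\delta}^1_{2n+2} = (\delta^1_{2n+1})^+$, the class $\mathbf{\Sigma}^1_{2n+2}$ is closed under well-ordered unions of length strictly less than $\mathbf{\delta}^1_{2n+2}$. Setting $B_\lambda = \bigcup_{\beta < \lambda} A_\beta$ at limit $\lambda$ and $B_\alpha = A_\alpha$ otherwise yields a strictly $\subset$-increasing and limit-continuous sequence $(B_\alpha : \alpha < \mathbf{\delta}^1_{2n+2})$ of $\mathbf{\Sigma}^1_{2n+2}$ sets. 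Define the rank norm $\tau(x) = \min\{\alpha : x \in B_\alpha\}$ on $\bigcup_\alpha B_\alpha$. Limit continuity forces $\tau$ to avoid limit values, and strict increase ensures each fiber $\tau^{-1}(\gamma + 1) = B_{\gamma+1} \setminus B_\gamma$ is non-empty for every $\gamma < \mathbf{\delta}^1_{2n+2}$. So $\tau$ is a surjection onto a set of order type $\mathbf{\delta}^1_{2n+2}$ whose successor fibers are all non-empty.

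The heart of the argument is to convert $\tau$ into an injection $\mathbf{\delta}^1_{2n+2} \to \bR$. Such an injection produces a well-orderable set of reals of size $\mathbf{\delta}^1_{2n+2} > \omega$, contradicting the standard AD-theorem that every well-orderable set of reals is countable. The plan is to invoke Moschovakis's Coding Lemma on the prewellordering induced by $\tau$, combined with third periodicity and the canonical $\mathbf{\Sigma}^1_{2n+2}$-scales: these scales supply, uniformly in $\gamma$, the leftmost-branch witness inside the Suslin tree for $B_{\gamma+1}$, and the Coding Lemma then yields a selector $\gamma \mapsto x_\gamma \in B_{\gamma+1} \setminus B_\gamma$. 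The resulting map is automatically injective, since for $\gamma < \gamma'$ we have $x_\gamma \in B_{\gamma+1} \subseteq B_{\gamma'}$ while $x_{\gamma'} \notin B_{\gamma'}$.

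The main obstacle is that $(A_\alpha)$ is not assumed definable in any prior pointclass, so $\tau$ is a bare well-founded relation rather than an element of a tractable pointclass, and the Coding Lemma requires some definability of the ambient prewellordering. The planned workaround is a reflection-style argument in the spirit of this paper: absorb the sequence into the setup by coding it as a set of reals whose Wadge rank is controlled through Jackson's analysis of the partition properties of $\delta^1_{2n+1}$, then apply the Coding Lemma relative to this coded sequence. Concretely, this means either running the argument inside an inner model where the sequence is automatically definable from a real parameter, or constructing a reflecting substructure that faithfully mirrors the sequence on a cofinal subset of $\mathbf{\delta}^1_{2n+2}$.
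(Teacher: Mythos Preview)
The paper does not give its own proof of this theorem; it cites the Jackson--Martin result and then proves the strictly stronger main theorem (that $\mathbf{\delta}^1_{2n+2}$ is $\mathbf{\Sigma}^1_{2n+2}$-unreachable) by inner-model-theoretic means. So there is no direct ``paper's proof'' to compare your attempt to, except insofar as Jackson--Martin is a corollary of Theorem~\ref{main theorem}.

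That said, your proposal has a genuine gap at the central step. The Coding Lemma does not hand you a selector $\gamma\mapsto x_\gamma\in B_{\gamma+1}\setminus B_\gamma$; it gives a \emph{choice set} $C$ in a pointclass with $C$ meeting each fiber of a given prewellordering, and it applies only when that prewellordering lies in the relevant pointclass. Here the norm $\tau$ is built from the \emph{arbitrary} sequence $(A_\alpha)$; nothing places $\tau$ or the relation $\{(x,y):\tau(x)\le\tau(y)\}$ in any projective class, so the Coding Lemma does not apply. Your ``leftmost-branch'' idea does not help either: to invoke leftmost branches uniformly you would first have to choose, for each $\gamma$, a real parameter coding $B_{\gamma+1}$, i.e.\ a choice function on the nonempty sets $\{y:U_y=B_{\gamma+1}\}$---exactly the kind of selection that ${\sf AD}$ forbids and that the problem is about. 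And even granting such a choice, the leftmost branch lands in $B_{\gamma+1}$, not in $B_{\gamma+1}\setminus B_\gamma$.

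You correctly identify this obstacle in your final paragraph, but the proposed workaround is not a proof: ``absorb the sequence into the setup by coding it as a set of reals'' and ``run the argument inside an inner model where the sequence is automatically definable'' are aspirations, not arguments. The actual Jackson--Martin proof (and Chuang's generalization) goes through the prewellordering property of $\mathbf{\Pi}^1_{2n+1}$ together with partition properties to bound lengths of $\subset$-chains, not by producing a selector into $\bR$; and the paper's approach to the stronger theorem replaces all of this with the directed-system machinery and Hjorth's reflection. Your outline as written stops precisely where the real work begins.
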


Chuang then generalized \rthm{jacksonmartin} by showing that in fact there are no strictly $\subset$-increasing or $\subset$-decreasing sequences of ${\mathbf{\bG}}$ sets of length $\d_\bG^+$ provided $\bG$ is closed under $\forall^\bR$, $\wedge$, $\vee$ and that $\bG$ has the pre-well-ordering property (see \cite[Theorem 3.5]{Jackson}). Here $\d_\bG$ is the supremum of the lengths of $\mathbf{\Delta}_{\bG}$-pwos\footnote{Here, $\mathbf{\Delta}_{\bG}$ consists of all those sets of reals $A$ such that both $A$ and $\bR-A$ belong to $\Gamma$.}. 

Kechris' 2nd Conjecture also appears in Kanamori's book where it appears as  \cite[Question 30.21]{Kanamori}. In \cite{Hjorth}, Hjorth verified Kechris' 2nd Conjecture for $n=0$ using techniques from inner model theory. In this paper, we  prove Kechris' 2nd Conjecture. 

\begin{theorem}[Hjorth-S.]\label{main theorem} Assume ${\sf{ZF+AD+DC}}$. Then $\bd^1_{2n+2}$ is $\bS^1_{2n+2}$-unreachable.
\end{theorem}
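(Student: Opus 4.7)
The plan is a proof by contradiction that generalizes Hjorth's reflection argument from $n=0$ to arbitrary $n$. Suppose $f : \bd^1_{2n+2} \to \bS^1_{2n+2}$ is an injection and write $A_\alpha := f(\alpha)$. Since $\bS^1_{2n+2}$ is exactly the class of $\d^1_{2n+1}$-Suslin sets and carries the scale property, each $A_\alpha$ equals $p[T_\alpha]$ for a canonically chosen tree $T_\alpha$ on $\omega \times \d^1_{2n+1}$ built from the Moschovakis--Martin scale. Identifying trees with subsets of $\d^1_{2n+1}$ produces an injection $\alpha \mapsto T_\alpha$ from $\bd^1_{2n+2}$ into $\mathcal{P}(\d^1_{2n+1})$; Cantor's theorem alone does not rule this out, so the task is to show that the range of this map actually lies in a well-orderable subcollection of cardinality at most $\d^1_{2n+1}$.

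The heart of the argument is to code each $T_\alpha$ by a pair $(\tau_\alpha, \vec\xi_\alpha)$, where $\tau_\alpha$ is one of countably many Skolem terms and $\vec\xi_\alpha$ is a finite tuple of ordinals below $\d^1_{2n+1}$, interpreted inside an iterable mouse whose indiscernibles capture the $\bS^1_{2n+2}$-theory. For $n=0$ this is the $L[x]$-style model used by Hjorth, with $x^\#$ providing Silver indiscernibles; for general $n$ the natural analogue is the minimal $2n$-Woodin mouse $M_{2n}^\#$ together with its relativizations absorbing real parameters, whose $\Sigma^1_{2n+2}$-correctness (Martin--Steel and Woodin) forces the Moschovakis--Martin tree $T_\alpha$ to be reconstructible from $\tau_\alpha$ applied to boundedly many indiscernibles. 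Jackson's strong partition property of $\d^1_{2n+1}$ is then invoked on the induced coloring to produce a homogeneous cofinal set of indices on which $\tau_\alpha$ is constant and the indiscernible substitution pattern is uniform. On this set, the coding collapses to a well-orderable target of cardinality at most $\d^1_{2n+1}$, which together with $\bd^1_{2n+2} = (\d^1_{2n+1})^+$ in $L(\bR)$ forces $A_\alpha = A_\beta$ for some $\alpha \neq \beta$, contradicting injectivity.

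The principal obstacle, and the source of the genuine content in going beyond Hjorth's original case, is the faithful mouse-theoretic capture of $\bS^1_{2n+2}$-sets for $n \geq 1$. One must verify that the canonical $\bS^1_{2n+2}$-scales and their associated trees are absolute between $V$ and the relevant mouse over a chosen parameter; that each $A_\alpha$ admits a Skolem-term description using only boundedly many indiscernibles, so that the coding target has the claimed cardinality after the partition-homogeneity reduction; and that the real parameters required to define the $A_\alpha$ can be absorbed into the mouse-theoretic coding without inflating the target back above $\d^1_{2n+1}$. For $n = 0$ these follow from Silver's classical theorem on $0^\#$ and the inner model theory of $L[x]$. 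For higher $n$ they depend on correctness of $M_{2n}^\#$ combined with Jackson's fine analysis of $\d^1_{2n+1}$, and it is precisely this interaction between inner model theory and partition combinatorics that constitutes the paper's extension of Hjorth's reflection argument.
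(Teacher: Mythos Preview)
Your proposal has a genuine gap at the critical step. You assert that each $A_\alpha$ ``admits a Skolem-term description using only boundedly many indiscernibles'' inside $\M_{2n}^\#$ or a relativization, citing $\Sigma^1_{2n+2}$-correctness. For $n=0$ this is essentially the Kechris--Martin theorem; its extension to $n \geq 1$ is precisely the obstacle that kept the conjecture open, and the paper says so explicitly. Correctness of $\M_{2n}^\#(x)$ alone does not yield such a finitary coding: the scale trees live on $\omega \times \d^1_{2n+1}$, and expressing membership of ordinals below $\d^1_{2n+1}$ in them via finitely many indiscernibles is a higher Kechris--Martin statement, not a consequence of correctness. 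Your invocation of Jackson's strong partition property of $\d^1_{2n+1}$ does not close this gap either---that machinery yielded the weaker $\bD^1_{2n+2}$-unreachability (\rthm{jackson}) but not the $\bS^1_{2n+2}$ result, and you have not indicated what coloring the partition relation is applied to or why homogeneity forces the range of your coding map to have size $\leq \d^1_{2n+1}$.

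The paper's route is designed to bypass exactly this obstacle and looks nothing like your sketch. It works with $\M_{2n+1}(z)$ rather than $\M_{2n}$, uses the Coding Lemma to absorb all real parameters into a single $z$, and combines Woodin's extender algebra with $\Pi^1_{2n+2}$-iterability and a genericity-iteration form of Hjorth's reflection (\rlem{hjorths trick}) to show that a code for each $B_\gg$ appears in a small generic extension of some iterate. The new idea (\rsubsec{remove km}) is that one can then assign to each $\gg$ a pair $(p_{\gg,\infty}, e_{\gg,\infty})$ lying in the direct-limit image $\M_\infty(\Q)$ of a fixed cutpoint initial segment $\Q\inseg \M|\k_\M$; injectivity of this assignment is proved directly via the product extender algebra and $\Sigma^1_{2n+2}$-correctness, and the contradiction is that $\card{\M_\infty(\Q)} < \bd^1_{2n+2}$ by a Kunen--Martin bound on a $\Sigma^1_{2n+2}$ prewellordering. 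Neither a higher Kechris--Martin theorem nor the strong partition property is used.
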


The following is an immediate corollary of \rthm{main theorem} and Woodin's result mentioned above\footnote{Notice that $\k$ is $\Gamma$-unreachable if and only if $\kappa$ is $\breve{\Gamma}$-unreachable.}. The case $n=0$ is due to Hjorth (\cite{Hjorth}).
\begin{corollary}\label{corollary}
Assume ${\sf{ZFC}}+{\sf{AD}}^{L(\bR)}$. If $E$ is a thin $\mathbf{\Pi}^1_{2n+2}$-equivalence relation then it has $\leq \bd^1_{2n+1}$-many equivalence classes.  In particular, any thin $\mathbf{\Pi}^1_2$ equivalence relation has $\leq \omega_1$-equivalence classes and any thin $\mathbf{\Pi}^1_4$ equivalence relation has $\leq \aleph_{\omega+1}^{L(\bR)}$-equivalence classes. 
 \end{corollary}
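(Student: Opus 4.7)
The plan is to derive \rcor{corollary} as an essentially immediate consequence of \rthm{main theorem} combined with Woodin's theorem on thin equivalence relations, cited in the introduction. This is precisely the pattern flagged in the paper's remark that under $\sf{AD^+}$, if $\kappa$ is $\bG$-unreachable then any thin equivalence relation lying in $\bG$ has $<\kappa$ many classes; I apply this with $\bG = \mathbf{\Pi}^1_{2n+2}$, using the obvious fact that $\bS^1_{2n+2}$-unreachability and $\mathbf{\Pi}^1_{2n+2}$-unreachability are equivalent via complementation.

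First, I would work inside $L(\bR)$, which satisfies $\sf{AD^+}$, so that Woodin's dichotomy applies and the set $\{[x]_E : x\in\bR\}$ of $E$-classes is well-orderable. Let $\kappa$ be its order type and fix a repetition-free enumeration $([x_\alpha]_E : \alpha<\kappa)$ of the classes. Each class $[x_\alpha]_E = \{y : y E x_\alpha\}$ is a section of the $\mathbf{\Pi}^1_{2n+2}$ set $E$, hence is itself $\mathbf{\Pi}^1_{2n+2}$; its complement $A_\alpha := \bR\setminus [x_\alpha]_E$ then lies in $\bS^1_{2n+2}$. Since distinct $E$-classes are disjoint and nonempty, the $A_\alpha$'s are pairwise distinct, so $\alpha\mapsto A_\alpha$ is an injection from $\kappa$ into $\bS^1_{2n+2}$. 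By \rthm{main theorem}, $\bd^1_{2n+2}$ is $\bS^1_{2n+2}$-unreachable, which forces $\kappa < \bd^1_{2n+2}$. Using the equality $\bd^1_{2n+2} = ((\bd^1_{2n+1})^+)^{L(\bR)}$ recorded in the introduction, $\kappa$ injects into $\bd^1_{2n+1}$ inside $L(\bR)$ (and hence in $V$), so the number of $E$-classes is at most $\bd^1_{2n+1}$, as required.

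The particular cases follow by substituting the classical value $\bd^1_1 = \omega_1$ and Jackson's computation $\bd^1_3 = \aleph_{\omega+1}^{L(\bR)}$, both of which appear in the introduction. There is essentially no obstacle beyond \rthm{main theorem} itself: the only points to check are (i) that Woodin's dichotomy genuinely applies to $E$ in our setting, which is handled by passing to $L(\bR)$ where the hypothesis $\sf{AD^+}$ of the cited theorem is available, and (ii) that complementing the $E$-classes produces a bona fide injection of length $\kappa$ into $\bS^1_{2n+2}$, which is immediate from the disjointness of equivalence classes. Both are routine, so the content of the corollary really does reduce to the main theorem together with the already cited results.
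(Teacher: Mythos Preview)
Your proposal is correct and follows exactly the approach the paper indicates: the corollary is immediate from \rthm{main theorem} together with Woodin's theorem that under ${\sf{AD}}^+$ thin equivalence relations have well-orderable quotients, combined with the fact $\bd^1_{2n+2}=(\bd^1_{2n+1})^+$ in $L(\bR)$. The paper gives no further argument, and your unpacking (sections of $E$ are $\mathbf{\Pi}^1_{2n+2}$, complement to land in $\bS^1_{2n+2}$, apply the main theorem) is precisely what is intended.
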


Our proof of \rthm{main theorem} uses inner model theory and directly builds on \cite{Hjorth} and \cite{SargPre}. 
From \cite{Hjorth}, we will mainly use the \textit{reflection argument}\footnote{Hjorth didn't call it a reflection argument.} used by Hjorth which appears on page 104 of \cite{Hjorth}. We state it as \rlem{hjorths trick}. According to Page 95 of \cite{Hjorth}, Hjorth's reflection argument is inspired by Woodin's unpublished proof of the pre-well-ordering property for $\Pi^1_3$. We strongly believe that it can have many other applications.

The reason that \rthm{main theorem} has been open since \cite{SargPre} is that the proof in \cite{Hjorth} uses the well-known Kechris-Martin theorem (see (i) and (ii) on page 105 of \cite{Hjorth}, see \cite{KM} for the Kechris-Martin theorem). It has been quite challenging to extend Kechris-Martin result in a way that could be useful to us. However, as it turns out, the use of Kechris-Martin theorem can be removed from \cite{Hjorth}, and this is our main new idea (see \rsubsec{remove km}). Clearly \rthm{jackson} is a corollary of \rthm{main theorem}, and so the inner model proof of \rthm{jackson} avoids the sophisticated machinery developed by Jackson in \cite{delta15}, though it uses inner model theory.

The main technical ingredient of our argument is the directed systems of mice. This is the system that Steel used in his calculation of $(\H|\Theta)^{L(\bR)}$ (\cite{Steel1995}) and Woodin used in his calculation of the $\H^{L(\bR)}$. The theory of these directed systems of mice has appeared in \cite{HODCoreModel}. We will use the material developed in \cite[Chapter 6]{HODCoreModel}. \cite{IMU} has a nice introduction to the subject. 

We expect that our methods will generalize and settle the following conjecture. ${\sf{AD}}^+$ is an extension of  ${\sf{AD}}$ introduced by Woodin (\cite{ADPlusBook}). 
\begin{conjecture}\label{my conjecture} Assume ${\sf{AD}}^+$. Suppose $\k$ is a regular Suslin cardinal and $\bG$ is the pointclass of $\k$-Suslin sets. Then $\k^+$ is $\bG$-unreachable. 
\end{conjecture}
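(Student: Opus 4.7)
I would argue by contradiction and assume that $f:\bd^1_{2n+2}\to\bS^1_{2n+2}$ is an injection, setting $A_\alpha=f(\alpha)$. Since $\bS^1_{2n+2}$ is exactly the pointclass of $\bd^1_{2n+1}$-Suslin sets, fix for each $\alpha<\bd^1_{2n+2}$ a tree $T_\alpha$ on $\omega\times\bd^1_{2n+1}$ with $A_\alpha=p[T_\alpha]$. The plan is to pull the sequence $(A_\alpha)_{\alpha<\bd^1_{2n+2}}$ into the directed system of mice advertised in the introduction and apply Hjorth's reflection argument (\rlem{hjorths trick}) to derive a contradiction.

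Concretely, I would work with the directed system of \cite[Chapter 6]{HODCoreModel} at the level appropriate to the pointclass $\bS^1_{2n+2}$, whose indexing mice carry the requisite Woodin cardinals and whose direct limit $\M_\infty$ computes an initial segment of $\H^{L(\bR)}$ containing $\bd^1_{2n+2}$. The trees $T_\alpha$, being $\bd^1_{2n+1}$-Suslin, should be codable by terms in $\M_\infty$ definable from ordinal parameters. The first task is to produce, uniformly in $\alpha$, a parameter $p_\alpha\in\M_\infty$ coding $T_\alpha$. Hjorth's reflection argument then says, roughly, that a $\bd^1_{2n+2}$-length sequence of such parameters must reflect to a single mouse $\M_s$ in the system; all but $\bd^1_{2n+1}$-many indices $\alpha$ will then have parameters in the range of $\pi_{s,\infty}:\M_s\to\M_\infty$, so the corresponding $A_\alpha$ can be listed by a function whose range has $\M_s$-cardinality at most $\bd^1_{2n+1}$, contradicting the injectivity of $f$.

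The principal obstacle, and the place where the new idea of \rsubsec{remove km} is needed, is the uniform mouse-theoretic computation of membership $x\in A_\alpha$. For $n=0$, Hjorth used the Kechris-Martin theorem (items (i) and (ii) on page 105 of \cite{Hjorth}) to conclude that $x\in A_\alpha$ is uniformly $\mathbf{\Pi}^1_{2n+1}$ in the ordinal parameter $\alpha$; as explained in the introduction, no suitable extension of Kechris-Martin is known for $n>0$, which is precisely why the conjecture has been open since \cite{SargPre}. My plan is to bypass Kechris-Martin entirely by rewriting ``$x\in p[T_\alpha]$'' as a genericity statement over an iterate of a suitable background mouse from the directed system, using the fact that the branches of $T_\alpha$ are built from the indiscernibles supplied by the Woodins of that mouse. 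If this translation can be made uniform across the system, then the reflection step goes through for every $n$, and Hjorth's argument lifts from $n=0$ to the general case, establishing the theorem.
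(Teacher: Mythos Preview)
The statement you are attempting to prove is a \emph{conjecture} that the paper explicitly leaves open; it is introduced with ``We expect that our methods will generalize and settle the following conjecture.'' There is therefore no proof in the paper to compare your proposal against.

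More importantly, your proposal does not address the conjecture as stated. The conjecture concerns an \emph{arbitrary} regular Suslin cardinal $\kappa$ under $\mathsf{AD}^+$, but you immediately specialize to $\kappa=\bd^1_{2n+1}$, $\bG=\bS^1_{2n+2}$, $\kappa^+=\bd^1_{2n+2}$: this is exactly \rthm{main theorem}, the projective case, which the paper \emph{does} prove. For the general conjecture one would need directed systems of mice far beyond the projective hierarchy, and the paper does not claim that the required machinery is available.

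Even read as a sketch for \rthm{main theorem}, your outline diverges from the paper's actual argument in its central mechanism. You propose to code the Suslin trees $T_\alpha$ by parameters $p_\alpha\in\M_\infty$ and then reflect the sequence of parameters. The paper never handles Suslin trees directly. Instead it fixes a $\Pi^1_{2n+3}(z)$ real $w$ that is not $\Sigma^1_{2n+3}(z)$, builds a $\Sigma^1_{2n+3}(z)$ approximation $w'\subseteq w$ via a formula $\theta$ expressed in terms of extender-algebra conditions inside small $fb$-cut levels of mice, picks $k\in w\setminus w'$, and uses that choice to pin down a small initial segment $\Q\trianglelefteq\M|\kappa_\M$. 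Hjorth's reflection (\rlem{hjorths trick}) then shows that for every $\gamma<\bd^1_{2n+2}$ a code for $A_\gamma$ appears in a generic extension of an iterate by $Coll(\omega,\Q_\N)$. The Kechris--Martin step is not bypassed by rewriting ``$x\in p[T_\alpha]$'' as a genericity statement; rather, one assigns to each $\gamma$ a pair $(p_{\gamma,\infty},e_{\gamma,\infty})$ consisting of a forcing condition and a recursive-function index, shows this assignment is injective, and obtains a contradiction because $\card{\pi_{\M,\infty}(\Q)}<\bd^1_{2n+2}$ by a Kunen--Martin bound on a $\Sigma^1_{2n+2}$ prewellordering.
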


\rcon{my conjecture} is a global conjecture like those made by Jackson (see \cite[Conjecture 6.4]{Jackson}, the conjectures in \cite{ICMJackson} and \cite[Problem 19]{OpenProblems}) though perhaps given the result of this paper \rcon{my conjecture} is somewhat easier than those made by Jackson. Such global conjectures test our understanding of projective sets. It is one of the deepest mysteries of descriptive set theory that, assuming ${\sf{AD}}$, the complete theory of analytic and  co-analytic sets doesn't immediately generalize to projective hierarchy. Perhaps the most well-known example of this phenomenon is that $\Delta^1_3$ to $(\Pi^1_3, \Sigma^1_3)$ is not the same as $\Delta^1_1$ to $(\Pi^1_1, \Sigma^1_1)$ (see \cite{Qtheory}). From a current point of view, it seems that the function $x\mapsto x^\#$ provides  singularly magical coding of subsets of $\omega_1$, and that coding,- which was used by Martin to establish the strong partition property for $\omega_1$ (see \cite[Theorem 28.12]{Kanamori}) and by Kechris-Martin to establish their celebrated Kechris-Martin theorem (see  \cite{KM}),- doesn't yet have a proper inner model theoretic generalization to higher levels of the projective hierarchy and beyond. Our current understanding is based on Jackson's deep analysis of measures (see \cite{Jackson}). The fact that global conjectures such as \rcon{my conjecture} and  Jackson's conjectures are still open seems to suggest that our current understanding of the projective hierarchy, just like it was  with our understanding of $(\Pi^1_1, \Sigma^1_1)$, may not be the final one, as whatever methods we discover to settle these global conjectures, the projective case will have to be the special case of these conjectures, and so these yet-to-be-discovered ideas will come with new insight into the projective hierarchy. \\\\

\textbf{Acknowledgments.} I wish to thank John Steel for introducing me to \cite{BoundHjorth} so many years ago. The work carried out in \cite{SargPre}, which answers most of the questions raised in the addendum of \cite{BoundHjorth}, was done while I was Steel's PhD student. The addendum of \cite{BoundHjorth} appeared in the unpublished version of Hjorth's paper by the same title available on his web site. I am grateful to Derek Levinson for a list of typos and corrections, and indebted to the referee for a long list of corrections.

This paper is about 10 years late. Sometime in 2010, while I was a postdoc at UCLA, Hjorth suggested that two of us work on improving the results of \cite{SargPre}, and in particular compute $b_{2n+1, 0}$ (which \cite{SargPre}  conjectures to be $\d^1_{2n+2}$) and also prove Kechris' 2nd Conjecture.  In \cite{BoundHjorth}, Hjorth showed that $b_{1, 0}=\delta^1_2=\omega_2$. The issues we encountered were the familiar ones: proving Kechris-Martin for $\Pi^1_5$ and beyond, and avoiding fine boundedness arguments involving $\Sigma^1_1$ relations and admissible ordinals. Unfortunately, on January 13 of 2011, Greg Hjorth unexpectedly passed away, and the project has remained unfinished. In the Spring of 2021 it became apparent that the use of Kechris-Martin in \cite{BoundHjorth} is unnecessary. 

The author’s work is funded by the National Science Center, Poland under the Maestro Call, registration number UMO-2023/50/A/ST1/00258.

\section{Remarks, Notations and Terminology}\label{remarks}

Experts can skip this and the following sections and directly go to \rsec{the proof}. The results in \rsec{pi iterability} are not fundamentally new and go back to \cite{PWIM}. However, \cite{PWIM} doesn't state them in the exact form that we need. Below we make some remarks, and set up our notation and terminology. 

\begin{review}\label{basic imt}\textbf{Basic concepts from inner model theory:}
\end{review}
The reader unfamiliar with basic concepts of inner model theory might find it helpful to consult \cite{OIMT}. Also, the introduction of \cite{IMU} is accessible and introduces many of the concepts that we need. Extenders were treated both in \cite{Jech} and \cite{Kanamori}.
\begin{enumerate}
\item Suppose $x\in {\sf{HC}}$\footnote{$\sf{HC}$ is the set of hereditarily countable sets.} is such that there is a wellordering of $x$ in $L_1[x]$ which can be defined without parameters over $L_0[x]$\footnote{Let $L_0[x]$ be the the transitive closure of $x$.}. We say $(\M, \Phi)$ is an $x$-mouse pair if $\M$ is an $x$-premouse and $\Phi$ is an $(\omega_1, \omega_1)$-iteration strategy for $\M$ (for example, see \cite[Definition 2.19, 3.9 and 4.4]{OIMT}\footnote{Here we only consider $(\omega, \omega_1, \omega_1)$-iteration strategies, and so we drop $``\omega"$ for convenience.})\footnote{An $x$-premouse is defined similarly to a premouse except one requires that $\mathcal{J}_0^{\M}$ is the transitive closure of $x$}. We say $(\M, \Phi)$ is a countable mouse pair if $\M$ is countable. We will often say that $\M$ is a premouse or $(\M, \Phi)$ is a mouse pair without mentioning the $x$. 
\item Suppose $\M=\mathcal{J}_\b^{\vec{E}}$ is a premouse and $\a\leq {\sf{Ord}}\cap \M$. We let $\vec{E}^\M$ be the extender sequence of $\M$. Because we will allow padded iterations, we let $\dom(\vec{E})^\M=\b$ and for those $\gg$ such that $\M$ doesn't have an extender indexed at $\gg$, we set $\vec{E}^\M(\gg)=\emptyset$. We then let $\M|\a=(\mathcal{J}_{\omega\a}^{\vec{E}\rest \omega \a}, \vec{E}\rest \omega\a, \in)$ and $\M||\a=(\mathcal{J}_{\omega \a}^{\vec{E}\rest \omega \a}, \vec{E}\rest \omega\a, \vec{E}(\omega \a), \in)$. 
\item Suppose $\M$ is a premouse. We say $\eta$ is a cutpoint of $\M$ if for all $E\in \vec{E}^\M$ with the property that $\cp(E)<\eta$, $\lh(E)\leq \eta$. 
\item Under ${\sf{AD}}$, as $\omega_1$ is measurable, $\omega_1$-iterability\footnote{This concept is defined in \cite[Definition 3.9]{OIMT}. Here again we ignore the integer $k$ of that definition as we take it to be $\omega$. If $\k$ is a measurable cardinal then $\k$-iterability implies $\kappa+1$-iterability.} is what is needed to prove the ${\sf{Comparison\ Theorem}}$ for countable mouse pairs (see \cite[Theorem 3.11]{OIMT}).
\item For a definition of an iteration\footnote{Following Jensen, we will use iteration for iteration trees.} and an iteration strategy see \cite[Definition 3.3 and 3.9]{OIMT}. Given an iteration $\T$ of a premouse $\M$, we write $\T=((\M_\a: \a<\lh(\T)), (E_\a: \a+1<\lh(\T)), \mathcal{D}, T)$  where
\begin{enumerate}
\item $E_\a\in \vec{E}^{\M_\a}$ is the extender picked from $\M_\a$,
\item $\mathcal{D}$ is the set of those $\a$ where a drop occurs, and
\item $T$ is the tree order of $\T$.
\end{enumerate}
We allow padded iterations, and so it is possible that $E_\a=\emptyset$.
\item Suppose $(\M, \Phi)$ is a mouse pair and $\N$ is a $\Phi$-iterate of $\M$ via iteration $\T$. We then let $\Phi_{\N, \T}$ be the strategy of $\N$ induced by the pair $(\Phi, \T)$. More precisely, $\Phi_{\N, \T}(\U)=\Phi(\T^\frown \U)$. 
\item Continuing with $(\M, \Phi)$, $\N$ and $\T$ as above, we say $\N$ is a complete $\Phi$-iterate if the iteration embedding $\pi^\T$ is defined, which happens if and only if there is no drop on the main branch of $\T$ (see the paragraph after \cite[Definition 3.3]{OIMT}).
\item We say $\M$ is an almost knowledgable mouse if $\M$ has a unique $(\omega_1, \omega_1)$-iteration strategy $\Phi$ such that whenever $\T$ is an iteration of $\M$ via $\Phi$ and $\N$ is the last model of $\T$ then 
\begin{enumerate}
\item  $\Phi_{\N, \T}$ is independent of $\T$, and\footnote{Such strategies are usually called positional, see for example \cite[Chapter 2.6]{ATHM}.}
\item if $\N$ is a complete iterate of $\M$ then $\pi^\T$ is independent of $\T$\footnote{Such strategies are usually called commuting, see for example \cite[Chapter 2.6]{ATHM}.}. 
\end{enumerate}
\item We say $\M$ is knowledgable if letting $\Phi$ be the unique $(\omega_1, \omega_1)$-iteration strategy of $\M$, whenever $\N$ is a complete iterate of $\M$ via $\Phi$, $\N$ is almost knowledgable.
\item If $\M$ is knowledgable then we let $\Phi_\M$ be its unique $(\omega_1, \omega_1)$-iteration strategy and for each $\Phi$-iterate $\N$ of $\M$, we let $\Phi_\N=\Phi_{\N, \T}$ where $\T$ is some iteration of $\M$ via $\Phi$ with last model $\N$. If $\M$ is knowledgable and $\N$ is a $\Phi_\M$-iterate of $\M$ then we say that $\N$ is an iterate of $\M$. 
If $\N$ is a complete $\Phi_\M$-iterate of $\M$ then we  say that $\N$ is a complete iterate of $\M$. 
\item Suppose $\M$ is a  knowledgable mouse and $\N$ is a normal iterate\footnote{We say $\N$ is a \textit{normal iterate} of $\M$ if there is a normal iteration tree $\T$ on $\M$ whose last model is $\N$. Normal iteration trees are exactly the iteration trees defined in \cite{OIMT} on page 23. In such trees, the lengths of the extenders increase and each extender is applied to the earliest possible model appearing in the tree. Normal iteration trees are the trees that appear in the comparison process.} of $\M$. Then we let $\T_{\M, \N}$ be the unique normal iteration of $\M$ according to $\M$'s unique iteration strategy whose last model is $\N$\footnote{$\T_{\M, \N}$ is unique because it is the iteration of $\M$ that is build via the comparison process, see \cite[Chapter 3.2]{OIMT}.}. In general, if $\N$ is an iterate of $\M$ then $\M$-to-$\N$ iteration may not be unique\footnote{\cite{farm2} establishes the following remarkable theorem. Suppose that $(\M, \Phi)$ is a mouse pair, $\M$ is sound and projects to $\omega$ and $\Phi$ is the unique iteration strategy of $\M$. Then $\Phi$ has full normalization, i.e., every $\Phi$-iterate of $\M$ can be obtained as a normal $\Phi$-iterate of $\M$.}. If $\N$ is a complete iterate of $\M$, we let $\pi_{\M, \N}:\M\rightarrow \N$ be the iteration embedding.
\item Suppose $\M$ is an $x$-premouse and $\T$ is an iteration of $\M$ of limit length. We let $\d(\T)=\sup\{\lh(E_\a^\T): \a<\lh(\T)\}$ and $\c(\T)=\cup_{\a<\lh(\T)}\M_\a^\T|\lh(E_\a^\T)$\footnote{$\c(\T)$ is usually denoted by $\M(\T)$. However, $\M$ gets overused in inner model theory, so we decided to change the notation.}. For more on these objects see \cite[Definition 6.9]{OIMT}.
\item Suppose $x\in \bR$. $\M_n(x)$ is the minimal class size $x$-mouse with $n$ Woodin cardinals. $\M_n^\#(x)$ is the minimal active\footnote{This just means that it has a last extender predicate indexed at the ordinal height of the mouse.} $x$-mouse with $n$ Woodin cardinals. We say $\M_n^\#(x)$ exists if there is an $\omega_1+1$-iterable active $x$-premouse with $n$ Woodin cardinals. Assuming ${\sf{AD}}$, $\M_n^\#(x)$ exists (for example, see \cite{PDMice} or \cite[Sublemma 3.2]{CoarseAD}). For each $n$, $\M_n^\#(x)$ is knowledgable (assuming it exist). If every real has a sharp and $\M_n(x)$ exists then $\M_n(x)$ is knowledgable. For the proof of these and relevant results see \cite[Chapter 2 and 3]{HODCoreModel} and especially \cite[Theorem 3.23]{HODCoreModel}.
\item Suppose $\M$ is an $x$-premouse, $\T$ is an iteration of $\M$ of limit length and $b$ is a branch. We say $\Q(b, \T)$ exists if there is $\a$ such that $\M^\T_b||\a\models ``\d(\T)$ is a Woodin cardinal" but $\mathcal{J}_1[\M^\T_b||\a]\models ``\d(\T)$ is not a Woodin cardinal". If $\Q(b, \T)$ exists we let it be $\M^\T_b||\a$ where $\a$ is the largest such that $\M^\T_b||\a\models ``\d(\T)$ is a Woodin cardinal". 
\item $\Q(b, \T)$ defined above is one of the most used objects in inner model theory. The reader may want to consult \cite[Definition 6.11]{OIMT} and \cite[Definition 2.11]{PWIM}. The reason that $\Q(b, \T)$ is important is that it uniquely identifies $b$. More precisely, if $c\not=b$ is another cofinal branch of $\T$ such that $\Q(c, \T)$ exists then $\Q(c, \T)\not =\Q(b, \T)$.
\item Suppose $\M$ is an $x$-premouse and $\T$ is an iteration of $\M$. We say $\T$ is below $\d$ if for every $\a<\lh(\T)$ either $\pi_{0, \a}^\T$ is not defined or $E_\a^\T\in \vec{E}^{\M_\a^\T|\pi_{0, \a}^\T(\d)}$. We say $\T$ is above $\nu$ if for every $\a<\lh(\T)$, $\cp(E_\a^\T)\geq \nu$.
\item We remark that when we say that $``\k$ is a measurable cardinal in a premouse $\M"$ or $``\k$ is a strong cardinal in a premouse $\M"$ or say other similar expressions we tacitly assume that these large cardinal properties are witnessed by the extenders on the extender sequence of $\M$. See \cite{MeasuresInMice} for results showing that such a restriction is unnecessary. 
\end{enumerate}
\begin{review}\label{ds} \textbf{The directed system:}
\end{review}
 The theory of directed systems, by now, has a long history. It originates in Steel's seminal paper \cite{Steel1995}. Since then it has been used to establish a number of striking applications of inner model theory to descriptive set theory. To learn more about the subject the interested reader may consult \cite[Chapter 8]{OIMT}, \cite[Chapter 6]{HODCoreModel}, \cite{Sandra}, \cite{IMU}, \cite{BSL}, \cite{ATHM}, \cite{Varsovian} and many other sources.
\begin{enumerate}
\item Suppose $\P$ is a knowledgable mouse. Let $\mathcal{I}_\P$ be the set of complete iterates $\N$ of $\P$ such that $\P$-to-$\N$ iteration has a countable length.
\item Suppose that either there is some $\nu<\omega_1$ such that $\P=L[\P|\nu]$ or  $\P$ itself is countable.  Then comparison implies that if $\R, \S\in \mathcal{I}_\P$ then there is  $\W\in \mathcal{I}_\P$ such that $\W$ is a complete iterate of both $\R$ and $\S$. Define $\leq_\P$ on $\mathcal{I}_\P$ by setting $\R\leq_\P \S$ if and only if $\S$ is an iterate of $\R$. Assuming $\P$ is knowledgable, we then get a directed system $\mathcal{F}_\P$ whose models consist of the models in $\mathcal{I}_\P$, whose directed order is $\leq_\P$ and whose embeddings are the iteration embeddings $\pi_{\R, \S}$. 
\item  Assuming $\P$ is as above, we let $\M_\infty(\P)$ be the direct limit of $\mathcal{F}_\P$ and given $\N\in \mathcal{I}_\P$, we let $\pi_{\N, \infty}:\N\rightarrow \M_\infty(\P)$ be the iteration embedding according to $\Phi_\N$. ${\sf{Comparison\ Theorem}}$ (see \cite[Theorem 3.11]{OIMT}) implies that $\M_\infty(\P)$ is well-founded (see for example the remark after \cite[Definition 8.15]{OIMT})).
\end{enumerate}
\begin{review}\label{ea} \textbf{The extender algebra:}
\end{review}
The extender algebra, which was discovered by Woodin, is the magic tool of inner model theory. The reader may consult \cite[Chapter 7.2]{OIMT}.
\begin{enumerate}
\item Suppose $\P$ is a premouse, $\d$ is a Woodin cardinal of $\P$ and $\nu<\d$. 
\begin{definition}\label{wap}
We say that $\mathcal{E}$ is \textbf{weakly appropriate} at $\d$ if $\mathcal{E}$ is a set consisting of extenders $E\in \vec{E}^{\P|\d}$ such that\\\\
(a) $\nu(E)$\footnote{Recall from \cite[Definition 2.2]{OIMT} that $\nu(E)$ is the supremum of the generators of $E$, the natural length of $E$.} is an inaccessible cardinal in $\P$,\\
(b) $\mathcal{E}$ witnesses that $\d$ is a Woodin cardinal\footnote{In the sense that for every $A\subseteq \d$ there is an $E\in \mathcal{E}$ such that $A\cap \nu(E)=\pi_E(A)\cap \nu(E)$.}.\\\\
If in addition\\\\
(c) for each $E\in \mathcal{E}$, $\pi_E(\mathcal{E})\cap (\P|\nu(E))=\mathcal{E}\cap (\P|\nu(E))$,\\\\
then we say that $\mathcal{E}$ is \textbf{appropriate}.
\end{definition}
 When $\d$ is clear from the context we will omit the expression ``at $\d$". Suppose now that $\mathcal{E}$ is weakly appropriate. We then let ${\sf{Ea}}^\P_{\d, \nu, \mathcal{E}}$ be the extender algebra of $\P$ defined using extenders $E\in \mathcal{E}$ such that $\cp(E)>\nu$. If $\nu=0$ then we omit it from our notation. If $\mathcal{E}$ consists of all extenders or if its role is irrelevant then we omit it from the notation.
 
 ${\sf{Ea}}^\P_{\d, \nu, \mathcal{E}}$ is the basic extender algebra for adding a real: thus, it has only countably many predicate symbols. The conditions in ${\sf{Ea}}^\P_{\d, \nu, \mathcal{E}}$ are formulas, and so given $g\subseteq {\sf{Ea}}^\P_{\d, \nu, \mathcal{E}}$ and $r\in {\sf{Ea}}^\P_{\d, \nu, \mathcal{E}}$, we will often write $g\models r$ instead of $r\in g$. 
 
 Instead of making our notation endlessly complicated, we will abuse our terminology and notation in the following way. Suppose $a=L[b]$ where for some $\a$, $b\subseteq \a$. Then we will say that $a$ is generic for ${\sf{Ea}}^\P_{\d, \nu, \mathcal{E}}$ to mean that if $\a$ is the least such that there is $b\in a$ with the property that $b\subseteq \a$ and $a=L[b]$, then $b$ is generic for the extender algebra at $\d$ that uses $\a$ many generators and extenders who critical points are strictly greater than $\nu$.
 
 A celebrated theorem of Woodin says that  ${\sf{Ea}}^\P_{\d, \nu, \mathcal{E}}$  has the $\d$-c.c. condition (assuming only that $\mathcal{E}$ is weakly appropriate, see  \cite[Chapter 7.2]{OIMT}).
\item Suppose $(\P, \Sigma)$ is a mouse pair, $\d$ is a Woodin cardinal of $\P$, $\nu<\d$, $\mathcal{E}$ is an appropriate\footnote{Here, we need appropriateness to ensure that the resulting iteration is normal.} set of extenders  and $(x_1, ..., x_k)\in \bR^k$. We say $\T$ is the $(\vec{x}, \d, \nu, \mathcal{E})$-genericity iteration of $\P$ if $\T$ is according to $\Sigma$ and for each $\a<lh(\T)$, setting $\M_\a=_{def}\M_\a^\T$ and $E_\a=_{def}E_\a^{\T}\in \vec{E}^{\M_\a}$, $\lh(E_\a)$ is the least $\gg\in \dom(\vec{E}^{\M_\a})$ such that setting $E=\vec{E}^{\M_\a}(\gg)$, the following clauses hold:
\begin{enumerate}
\item $\cp(E)>\nu$ and $\lh(E)<\pi^\T_{0, \a}(\d)$,
\item $E\in \pi_{0, \a}^\T(\mathcal{E})$ (and hence, $E$ measures all subsets of $\cp(E)$ in $\M_\a$),
\item for some $i\leq k$, $x_i$ doesn't satisfy an axiom of $\pi_{\P, \M_\a}({\sf{Ea}}^\P_{\d, \nu. \mathcal{E}})$ that is generated by $E$. More precisely,  $x_i\not \models A_{E, \vec{\phi}}$ where $\vec{\phi}\in\M_\a|(\cp(E)^+)^{\M_\a}$ and $A_{E, \vec{\phi}}$ is the axiom $\bigvee \vec{\phi}\iff \bigvee \pi_{E}^{\M_\a}(\vec{\phi})\rest \nu(E)$, and
\item for all $\gg'<\gg$, if $\gg'\in \dom(\vec{E}^{\M_\a})$ then $E_{\gg'}^{\M_\a}$ does not satisfy clauses (a)-(c) above.
\end{enumerate}
\item  Suppose $\phi(v_0, ..., v_k)$ is a $\Sigma^1_{2n+2}$ formula, $\M$ is a premouse,  $\d_0<...<\d_{2n-1}$ are Woodin cardinals of $\M$, $\k<\d_0$, $d=(\k, \d_0, ..., \d_{2n-1})$ and $\vec{a}\in [\bR^{\M}]^{<\omega}$. By induction, we define $\phi_{\M, d}$ and the meaning of $\M\models \phi_{\M, d}[\vec{a}]$.  If $n=0$ then $\phi_{\M, d}=\phi$ and $\M\models \phi_{\M, d}[\vec{a}]$ if and only if $\M\models \phi[\vec{a}]$. Next let $\psi$ be a $\Sigma^1_{2n}$ formula such that $\phi(v_0, ..., v_k)\iff \exists u_0 \forall u_1 \psi(u_0, u_1, v_0, ..., v_k)$. We then write $\M\models \phi_{\M, d}[\vec{a}]$ if and only if there is  $p\in {\sf{Ea}}_{\d_0, \k}$ such that if $g$ is the name of the generic for ${\sf{Ea}}_{\d_0, \k}$ then $p$ forces that there is $x\in \bR$ such that every $q\in {\sf{Ea}}^{\M[g]}_{\d_1, \d_0}$ forces that for all $y\in \bR$, $\psi_{\M, d'}[ x, y, \vec{a}]$ where $d'=(\d_1, \d_2,..., \d_{2n-1})$.\footnote{We believe that the reader can extract the exact formula $\phi_{\M, d}$ from this equivalence.} Similarly we can define $\phi_{\M, d}$ for a $\Pi^1_{2n+3}$ formula $\phi$ and $d=(\k, \d_0, \d_1, ..., \d_{2n})$ (assuming that $\d_0, \d_1, ..., \d_{2n}$ are Woodin cardinals of $\M$). 

\item We will need the following basic applications of the extender algebra. 

\begin{proposition}\label{correctness} Suppose $x\in \bR$, $\M$ is countable $\omega_1+1$-iterable mouse over $x$, $\M\models {\sf{ZFC}}$, $\M$ has $2n$ Woodin cardinals, and  $\phi(\vec{v})$ is a $\Sigma^1_{2n+2}$ formula. Let $\d_0<\d_1<...<\d_{2n-1}$ be the Woodin cardinals of $\M$ and let $\k<\d_0$ be any ordinal. Set $d=(\k, \d_0, ..., \d_{2n-1})$ and suppose that for some $\vec{a}\in \M\cap \bR^k$, $\M\models \phi_{\M, d}[\vec{a}]$. Then $\phi[\vec{a}]$. 
\end{proposition}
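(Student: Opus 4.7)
The plan is to prove the proposition by induction on $n$. For the base case $n=0$ the formula $\phi_{\M,d}$ is by definition just $\phi$, so the hypothesis reduces to $\M\models\phi[\vec a]$; since $\phi$ is $\Sigma^1_2$, $\M$ is transitive, and $\vec a\in\M$, Shoenfield absoluteness upward delivers $\phi[\vec a]$ in $V$.

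For the inductive step I assume the proposition for $n$ and prove it for $n+1$. So $\phi$ is $\Sigma^1_{2n+4}$, written $\phi(\vec v)\iff\exists u_0\forall u_1\,\psi(u_0,u_1,\vec v)$ with $\psi\in\Sigma^1_{2n+2}$. Unpacking the definition of $\phi_{\M,d}$ yields a condition $p\in{\sf Ea}^{\M}_{\d_0,\k}$ forcing that there is a real $x$ for which the trivial condition of ${\sf Ea}^{\M[\dot g]}_{\d_1,\d_0}$ forces $\forall y\,\psi_{\M,d'}[x,y,\vec a]$, where $d'=(\d_1,\d_2,\ldots,\d_{2n+1})$. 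Using countability of $\M$ I pick in $V$ an $\M$-generic $g\subseteq{\sf Ea}^{\M}_{\d_0,\k}$ with $p\in g$, and let $x^*\in\bR^{\M[g]}$ be a witness to the existential. The claim to establish is that $x^*$ witnesses $\phi$ in $V$, that is, $\psi[x^*,y,\vec a]$ holds for every $y\in\bR$. Fix such a $y$. By $\omega_1+1$-iterability of $\M$ and Woodin's genericity theorem, run the $(y,\d_1,\d_0)$-genericity iteration of $\M$ using extenders with critical point above $\d_0$, obtaining a complete iterate $\N$ with embedding $\pi:\M\to\N$ that is the identity on $\M|\d_0^+$, together with an $\N[g]$-generic $h\subseteq{\sf Ea}^{\N[g]}_{\pi(\d_1),\d_0}$ containing $y$. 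Since $\pi$ fixes $\vec a$, $g$ and $x^*$, elementarity of $\pi$ and preservation of the forcing relation give $\N[g][h]\models\psi_{\N,\pi(d')}[x^*,y,\vec a]$.

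The final step is to convert this forced truth into an application of the inductive hypothesis. I reorganize $\N[g][h]$ as a countable $\omega_1+1$-iterable ${\sf ZFC}$-premouse over $(x^*,y,\vec a)$ with $2n$ Woodin cardinals $\pi(\d_2)<\cdots<\pi(\d_{2n+1})$: the two-step extender-algebra forcing has size below $\pi(\d_1)$ and satisfies the $\pi(\d_1)$-c.c., so these Woodins and their iterability are preserved by pulling back $\Phi_\N$. With cutoff parameter $\pi(\d_1)$, the IH applied to $\psi$ and this reorganization yields $\psi[x^*,y,\vec a]$ in $V$, completing the induction. The principal subtlety is precisely this reorganization and the verification that the definition of $\psi_{\cdot,\cdot}$ is insensitive to the passage from $\N$ to $\N[g][h]$; both are standard consequences of the way the extender-algebra recursion in the definition of $\phi_{\M,d}$ is tuned and of iterability-lifting for small forcings.
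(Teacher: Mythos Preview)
Your argument is correct and follows essentially the same approach as the paper's own proof, which only writes out the prototypical case $n=1$ and leaves the evident recursion to the reader; your induction on $n$ is the natural formalization of that recursion. The reorganization of $\N[g][h]$ as a premouse that you flag as the main subtlety can in fact be sidestepped by simply unfolding the argument in one pass (iterate and force $2n$ times, then apply Shoenfield absoluteness to the residual $\Sigma^1_2$ statement), which is what the paper's treatment of $n=1$ tacitly suggests for general $n$.
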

\begin{proof} We give the proof of the prototypical case of $n=1$. Suppose $\phi$ is $\exists u_0 \forall u_1 \psi[u_0, u_1, \vec{v}]$ where $\psi$ is $\Sigma^1_2$.  Let $p\in {\sf{Ea}}^\M_{\d_0, \k}$ be a condition witnessing $\phi_{\M, d}[\vec{a}]$. Let  $g\subseteq {\sf{Ea}}^\M_{\d_0, \k}$ be $\M$-generic such that $p\in g$ and $g\in V$. Let then $b_0$ be a real in $\M[g]$ such that every $q\in {\sf{Ea}}^{\M[g]}_{\d_1, \d_0}$ forces that if $u$ is a real then $\psi[b_0, u, \vec{a}]$. 

We now want to see that for all $b_1\in \bR$, $\psi[b_0, b_1, \vec{a}]$ holds. Fix $b_1\in \bR$ and an $\omega_1+1$-strategy $\Sigma$ for $\M$. Let $\N$ be a complete iterate of $\M$ via iteration $\T$ that is above $\d_0$ and is such that $b_1$ is generic over $\N$ for $\pi^\T({\sf{Ea}}^{\M[g]}_{\d_1, \d_0})$. Because $\cp(\pi^\T)>\d_0$, we have that in $\N[g]$,  every $q\in\pi^\T({\sf{Ea}}^{\M[g]}_{\d_1, \d_0})$ forces that if $u$ is a real then $\psi[b_0, u, \vec{a}]$. Therefore, we have that $\N[g][b_1]\models \psi[b_0, b_1, \vec{a}]$. But then it follows from the upward absoluteness of $\Sigma^1_2$ formulas that $\psi[b_0, b_1, \vec{a}]$. 
\end{proof}
The same proof also gives the following. 
\begin{proposition}\label{correctness pi} Suppose $x\in \bR$, $\M$ is countable $\omega_1+1$-iterable mouse over $x$, $\M\models {\sf{ZFC}}$, $\M$ has $2n+1$ Woodin cardinals, and  $\phi(\vec{v})$ is a $\Pi^1_{2n+3}$ formula. Let $\d_0<\d_1<...<\d_{2n}$ be the Woodin cardinals of $\M$ and let $\k<\d_0$ be any ordinal. Set $d=(\k, \d_0, ..., \d_{2n})$ and suppose that for some $\vec{a}\in \M\cap \bR^k$, $\M\models \phi_{\M, d}[\vec{a}]$. Then $\phi[\vec{a}]$. 
\end{proposition}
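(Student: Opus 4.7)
The plan is to mirror the proof of \rprop{correctness}, with the roles of the existential and universal quantifiers interchanged. Write $\phi$ as $\forall u_0\,\psi(u_0,\vec{v})$, where $\psi$ is a $\Sigma^1_{2n+2}$ formula. The definition of $\phi_{\M,d}[\vec{a}]$ for a $\Pi^1_{2n+3}$ formula, alluded to in \rrev{ea}(3), is the evident dual: every $p\in {\sf{Ea}}^\M_{\d_0,\k}$ forces that for every real $x$ of the generic extension $\M[g]$, $\psi_{\M[g],d'}[x,\vec{a}]$ holds, where $d'=(\d_0,\d_1,\ldots,\d_{2n})$ (so that $\d_0$ plays the role of the new base ordinal in the $\Sigma^1_{2n+2}$ evaluation over $\M[g]$). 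I will take this as the working definition and derive $\phi[\vec{a}]$ from it.

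Fix an arbitrary real $b$ and an $(\omega_1,\omega_1)$-iteration strategy $\Sigma$ for $\M$. By Woodin's genericity iteration theorem applied to the extender algebra at $\d_0$, iterate $\M$ via $\Sigma$ below $\d_0$ and above $\k$ to produce a complete iterate $\N$ with iteration embedding $\pi=\pi^\T:\M\to\N$ of critical point above $\k$, such that $b$ is $\pi({\sf{Ea}}^\M_{\d_0,\k})$-generic over $\N$. By hypothesis and elementarity of $\pi$, every condition in $\pi({\sf{Ea}}^\M_{\d_0,\k})$ forces over $\N$ that for every real $x$ of the generic extension, $\psi_{\N[g],\pi(d')}[x,\vec{a}]$ holds. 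Specializing to the generic $g$ determined by $b$ yields $\N[b]\models \psi_{\N[b],\pi(d')}[b,\vec{a}]$.

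Now $\N[b]$ is a countable $\omega_1+1$-iterable premouse satisfying ${\sf{ZFC}}$ containing the $2n$ Woodin cardinals $\pi(\d_1)<\cdots<\pi(\d_{2n})$, with $\pi(\d_0)$ available as a base ordinal below $\pi(\d_1)$. Iterability of $\N[b]$ is the standard fact that extender-algebra generic extensions preserve iteration strategies. Applying \rprop{correctness} to $\N[b]$, with the $\Sigma^1_{2n+2}$ formula $\psi$, parameters $(b,\vec{a})$, and the tuple $(\pi(\d_0),\pi(\d_1),\ldots,\pi(\d_{2n}))$ playing the role of $d$, we conclude that $\psi[b,\vec{a}]$ holds in $V$. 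Since $b$ was arbitrary, $\phi[\vec{a}]$.

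The only delicate point, and the main piece of bookkeeping, is to verify that the recursive definition of $\phi_{\M,d}$ in the $\Pi^1_{2n+3}$ case transitions correctly both under the iteration embedding $\pi$ and under the passage from $\M$ to $\M[g]$, so that what is true in $\N[b]$ is literally the statement to which \rprop{correctness} applies. This reduces to observing that the extender algebra, its axioms, and the forcing relation for those axioms all commute with iteration maps whose critical point lies above their support, which is immediate from the definitions.
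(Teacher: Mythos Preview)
Your proof is correct and is precisely what the paper intends: it says only ``The same proof also gives the following,'' and your argument---iterate below $\d_0$ to make an arbitrary real $b$ generic, then invoke the $\Sigma^1_{2n+2}$ case on the remaining $2n$ Woodins---is exactly that dualization. One cosmetic point: $\N[b]$ is not literally a premouse, so you are not applying \rprop{correctness} verbatim but rather its proof (iterating the underlying mouse $\N$ above $\pi(\d_0)$ while carrying $b$ along as a small generic), which is what you indicate anyway in your remark about iterability of extender-algebra extensions.
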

\item In \cite{Hjorth}, Hjorth showed that the product of the extender algebra with itself is still $\d$-c.c. More precisely, suppose $M$ is a transitive model of ${\sf{ZFC}}$, $\d$ is a Woodin cardinal of $M$, $\mathcal{E}$ is a weakly appropriate set of extenders and $\nu, \nu'<\d$. Then ${\sf{Ea}}^M_{\d, \nu, \mathcal{E}}\times {\sf{Ea}}^M_{\d, \nu', \mathcal{E}}$ is $\d$-c.c. For the proof see \cite[Lemma 1.2]{Hjorth}. 
\item Suppose $M$, $\d$ and $\nu, \nu'<\d$ are as above. We then let ${\sf{ea}}$ be the name for the generic object for ${\sf{Ea}}^M_{\d, \nu}$ and let $({\sf{ea}}_1, {\sf{ea}}_2,..., {\sf{ea}}_n)$ be the sequence of reals ${\sf{ea}}$ codes. We let $({\sf{ea}}^l, {\sf{ea}}^r)$ be the name for the generic of ${\sf{Ea}}^M_{\d, \nu}\times {\sf{Ea}}^M_{\d, \nu'}$. We then have that $({\sf{ea}}^l_1,..., {\sf{ea}}^l_n)$ and $(\sf{ea}^r_1,..., {\sf{ea}}^r_m)$ are the finite sequences of reals coded by ${\sf{ea}}^l$ and ${\sf{ea}}^r$. 
\end{enumerate}

\begin{review}\label{s constructions} \textbf{Backgrounded constructions and $S$-constructions:}
\end{review}
$S$-construction is a method of translating the mouse structure to a similar structure over some inner model. The details of such a construction first appeared in \cite[Lemma 1.5]{Selfiterability} where it was called $P$-constructions. In \cite{SargPre}, the author renamed them $S$-constructions\footnote{``S" stands for Steel.}. In this paper, we will need a special instance of $S$-constructions. The reader is advised to review the notion of \textit{fully backgrounded constructions}\footnote{The difference between full backgrounded constructions and $K^c$ constructions defined in \cite{OIMT} as well as in many other places is that in the fully backgrounded constructions the background extenders are assumed to be total (i.e. they measure all subsets of their critical points) while in $K^c$ constructions this requirement is relaxed. Thus, the fully backgrounded constructions can inherit large cardinals only if there are already large cardinals in $V$ while $K^c$ constructions can inherit large cardinals even if there are no large cardinals in $V$. For example, see \cite{JSSS}.} as defined in \cite[Chapter 11]{FSIT}.
\begin{enumerate}
\item Suppose $\P$ is a premouse, $\d$ is a $\P$-cardinal and $z\in \P|\d$. We then let ${\sf{Le}}(z)$ be the last model\footnote{We will often say that it is the output of the fully backgrounded construction.} of the fully backgrounded construction of $\P|\d$ over $z$ as defined in \cite[Chapter 11]{FSIT}. In this construction, all extenders used have critical points $>\eta$ where $\eta$ is the least such that $z\in \P|\eta$. If $z=\emptyset$ then we omit it from our notation. We will only consider such fully backgrounded constructions over $z$ that can be easily coded as a subset of ordinals. Typical examples of $z$'s that we will consider are reals and premice.

 If $\P$ is $\omega_1+1$-iterable then ${\sf{Ord}}\cap {\sf{Le}}(z)=\d$, and if $\a={\sf{Ord}}\cap \P$ and $\d$ is Woodin in $\P$ then $L_\a[{\sf{Le}}(z)]\models ``\d$ is a Woodin cardinal" (see \cite[Chapter 11]{FSIT}). 

Below, in clauses 2-6, $\P$, $\d$ and $z$ are as above, $\d$ is a Woodin cardinal of $\P$ and $u\in \bR\cap \P$.
\item  Suppose the $z$-premouse $\M\subseteq \P$ is such that ${\sf{Le}}(z)\insegeq \M$ and for every $\a<{\sf{Ord}}\cap \M$, $\M||\a \in \P$. Then for any $\nu<\d$, $u$ is generic over ${\sf{Ea}}^\M_{\d, \nu}$. This is because every extender $E\in \vec{E}^{{\sf{Le}}(z)}$ such that $\nu(E)$ is inaccessible in ${\sf{Le}}(z)$ is background by an extender $F\in \vec{E}^\P$ such that there is a factor map $\tau: Ult({\sf{Le}}(z), E)\rightarrow \pi^\P_F({\sf{Le}}(z))$ with the property that $\cp(\tau)\geq \nu(E)$. Fixing now $\vec{\phi}\in {\sf{Le}}(z)|(\cp(E)^+)^{{\sf{Le}}(z)}$ such that $u\models \bigvee \pi_E^{{\sf{Le}}(z)}(\vec{\phi})\rest \nu(E)$, we have that $u\models \bigvee \pi^\P_F(\vec{\phi})$. But now because $u\in \P$, we have that $u\models \bigvee \vec{\phi}$. Thus, $u\models A_{E, \vee{\phi}}$.
\item We say $\P$ is translatable if for every $z\in \P\cap \bR$, ${\sf{Ord}}\cap {\sf{Le}}(z)=\d$\footnote{The fully backgrounded constructions may fail to reach $\d$.}. 
\item Given a translatable $\P$ and $z\in \P\cap \bR$, we let ${\sf{StrLe}}(\P, z)$ be the result of the $S$-construction over ${\sf{Le}}(z)$ that translates the extenders of $\P$ with critical points $>\d$ into extenders over ${\sf{Le}}(z)$. 
\item It is shown in \cite[Lemma 1.5]{Selfiterability} that if $E\in \vec{E}^\P$ is such that $\cp(E)>\d$ then $E\cap {\sf{StrLe}}(\P, z)\in \vec{E}^{{\sf{StrLe}}(\P, z)}$.
\item It follows that every Woodin cardinal of $\P$ greater than $\d$ is a Woodin cardinal of ${\sf{StrLe}}(\P, z)$, and also if $\P$ is $\omega_1$-iterable then ${\sf{StrLe}}(\P, z)$ is $\omega_1$-iterable.
\item Suppose $x, z\in \bR$, $\N$ is a complete iterate of $\M_n^\#(z)$ such that $\T_{\M_n^\#(z), \N}$ is below the least Woodin cardinal of $\M_n^\#(z)$, and some real recursive in $x$ codes $\N$. Set $\P={\sf{StrLe}}(\M_n(x), z)$. Then $\P$ is a complete iterate of $\N$. The proof proceeds as follows. First it is shown that there is a normal iteration $\T$ of $\N$ which is below the least Woodin cardinal of $\N$ and if $\P'$ is the last model of $\T$ then $(\N(z))^{\M_n(x)}=\P'|\d$ where $\d$ is the least Woodin cardinal of $\P'$ (and also $\M_n(x)$). To establish this result one uses the stationarity of the backgrounded constructions which says that in the comparison of $\P$ and $\P'$, the iteration of $\P$ is trivial. One then shows that $\P'=\P$, and here, the important fact is that $\M_n(x)|\d$ is generic over $\P$. This in particular implies that $\P[\M_n(x)|\d]=\M_n(x)$. One then concludes that every set in $\P$ is definable from a finite sequence $s\in \d^{<\omega}$ and a finite sequence of indiscernibles for $\M_n(x)$. Since $\P'|\d=\P|\d$ and $\P'$ has the same property (being a complete iterate of $\N$ below its least Woodin cardinal), it follows that $\P=\P'$. The details of what we have said has appeared in a number of places. The reader may find it useful to consult \cite[Lemma 3.20]{Sandra}, \cite[Definition 1.1]{NegIdeal}, \cite{Varsovian}, \cite[Lemma 2.11]{ATHM}, \cite[Lemma 3.23]{trang2013}, \cite[Lemma 1.3]{Selfiterability} and the discussion after \cite[Lemma 1.4]{Selfiterability}.\\

The following objects will be used in clauses 8-10. Suppose $\P$ is a premouse, $\d$ is a Woodin cardinal of $\P$ and $a\in \P|\d$. Let $\N=({\sf{Le}}(a))^{\P|\d}$ and suppose ${\sf{Ord}}\cap \N=\d$.\footnote{In this construction all extenders used have critical points greater than $\eta$ where $\eta$ is least such that $a\in \P|\eta$.}  
\item Suppose $\k$ is a measurable cardinal of $\N$ as witnessed by the extenders on the sequence of $\N$. Then $\k$ is a measurable cardinal in $\P$. This is essentially because in the fully backgrounded construction all extenders used for backgrounding purposes are total extenders. 
\item Similarly, if $\k$ is a strong cardinal of $\N$ then $\kappa$ is a $<\d$-strong cardinal in $\P$.
\item Suppose $\k$ is a $<\d$-strong strong cardinal in $\P$. Then
\begin{lemma}\label{strongs are fb} $\N|\k=({\sf{Le}}(a))^{\P|\k}$. 
\end{lemma}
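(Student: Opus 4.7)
The plan is to prove by induction on $\xi \leq \k$ that the $\xi$-th model produced by $({\sf{Le}}(a))^{\P|\k}$ agrees with the corresponding $\xi$-th model produced by $({\sf{Le}}(a))^{\P|\d} = \N$. The core-taking steps and base case are identical in the two constructions, since these steps depend only on the current model and not on the ambient universe in which the construction is being carried out. The substantive step is the extender-adding phase: I need to show that for each $\xi<\k$, an extender $E$ with $\lh(E) < \k$ is added in the $\P|\k$-construction at stage $\xi$ if and only if it is added in the $\P|\d$-construction at stage $\xi$.

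One direction is immediate: any background extender $F \in \vec{E}^{\P|\k}$ is also in $\vec{E}^{\P|\d}$, so anything backgrounded in $\P|\k$ is automatically backgrounded in $\P|\d$.

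For the other direction, suppose that in the $\P|\d$-construction we add an extender $E$ at stage $\xi$ witnessed by some background $F \in \vec{E}^\P$ with $\lh(F) < \d$. I must produce $F' \in \vec{E}^\P$ with $\lh(F') < \k$ that also backgrounds $E$. Here I invoke that $\k$ is $<\d$-strong in $\P$: pick $G \in \vec{E}^\P$ with $\cp(G) = \k$ and $\P|(\lh(F)+1) \subseteq \text{Ult}(\P, G)$, and let $\pi_G : \P \to \text{Ult}(\P, G)$ denote the associated ultrapower embedding. Then $\pi_G(\k) > \lh(F)$, and since $\cp(\pi_G) = \k$ and by inductive hypothesis the current-stage model lives in $\P|\k$, both $E$ and that model are fixed by $\pi_G$. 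In $\text{Ult}(\P, G)$, the statement ``there exists $F^* \in \vec{E}^{\pi_G(\P)}$ with $\lh(F^*) < \pi_G(\k)$ backgrounding $E$'' holds, witnessed by $F$ itself. By elementarity of $\pi_G$, the corresponding statement holds in $\P$ with $\pi_G(\k)$ replaced by $\k$, yielding the required $F'$. Since both constructions apply the same extender-selection rule, they add the same extender at stage $\xi$, completing the induction and establishing the equality of models.

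The main obstacle I anticipate is careful bookkeeping of the precise clauses in the definition of ``fully backgrounded construction'' as given in \cite[Chapter 11]{FSIT}: in particular, verifying that the agreement and amenability conditions involving $\nu(F')$ and the totality of $F'$ on $\P$ are preserved under the reflection argument, and that the $\P|\k$-construction does not break down prematurely. These issues should resolve routinely because $G$ fixes all relevant structure below $\k$ and since $\k$, being strong in $\P$, is inaccessible with enough closure for the construction to proceed through stage $\k$; nonetheless they deserve detailed attention.
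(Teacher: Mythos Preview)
Your proposal is correct and uses the same essential idea as the paper: reflect via an extender $G$ with critical point $\k$ witnessing strongness. The difference is purely organizational. You run a direct stage-by-stage induction, reflecting at each extender-adding step the statement ``there is a background $F'$ of length $<\k$ for $E$''. The paper instead argues by contradiction: it takes the least stage $\xi$ at which the $\P|\d$-construction produces a model $\Q$ projecting across $\k$, observes that its core $\Q'$ lies in $\P|\k$ but is not produced by the $\P|\k$-construction, and then reflects the statement ``$\Q'$ is produced by the fully backgrounded construction'' via a strongness extender to obtain the contradiction. Your version is arguably cleaner in that it avoids having to justify why failure of the equality forces some model to project across $\k$, at the cost of tracking the precise Mitchell--Steel extender-selection rule through the induction (which you correctly flag as the main bookkeeping burden).
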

\begin{proof} To see this, suppose not. Set $\N'=({\sf{Le}}(a))^{\P|\k}$ and let $\xi$ be the least such that the $\xi$th model of the fully backgrounded construction of $\P|\d$ over $a$ projects across $\k$. Let $\Q$ be the $\xi$th model of the fully backgrounded construction of $\P$ over $a$ and let $\nu<\d$ be such that $\Q$ is constructed by the fully backgrounded construction of $\P|\nu$ over $a$. Let $\Q'$ be the core of $\Q$. Since $\rho_\omega(\Q)<\k$, we must have that $\Q'\in \P|\k$. We now have that $\Q'$ is not constructed by the fully backgrounded construction of $\P|\k$ over $a$\footnote{Because the construction reaches $\d$ we cannot construct the same model twice at different stages as otherwise the construction will be looping between these two stages.}. However, if $E\in \vec{E}^{\P|\d}$ is any extender with $lh(E)>\nu$, then in $Ult(\P, E)$, $\pi_E(\Q')=\Q'$ is constructed by the fully backgrounded construction of $Ult(\P, E)|\pi_E(\k)$ over $a$.
\end{proof}

\begin{definition}\label{fbcut} We say $\k$ is an $fb$-cut in $\P$ if letting $\d_0$ be the least Woodin cardinal of $\P$, $\k<\d_0$, $\k$ is a $\P$-cardinal and
\begin{center}
${\sf{Le}}^{\P|\d_0}|\k={\sf{Le}}^{\P|\k}$.
\end{center} 

We say $\k$ is a weak $fb$-cut if  whenever $\Q$ is a mouse that appears in the fully backgrounded construction of $\P$ over ${\sf{Le}}^{\P|\k}$ (and hence uses extenders with critical points $>\k$), $\rho_\omega(\Q)\geq \k$\footnote{$\k$ must be a cutpoint in such a $\Q$.}. 
\end{definition}

\item (\textbf{Universality})  Suppose $\P$ is a translatable premouse (see clause 3 above), $\d$ is its least Woodin cardinal and $a\in \P|\d$. Suppose $\Q$ is a $\d+1$-iterable $a$-premouse in $\P$. Then either ${\sf{Le}}(a)$ has a superstrong cardinal or $\Q\insegeq {\sf{Le}}(a)$. Moreover, if $\N$ is some fully backgrounded construction of $\P|\d$ such that $a\in \N$ and ${\sf{Ord}}\cap {\sf{Le}}(a)^\N=\d$ then either ${\sf{Le}}(a)^\N$ has a superstrong cardinal or $\Q\insegeq {\sf{Le}}(a)^\N$. These results are due to Steel and are consequences of universality of the fully backgrounded constructions (e.g. see \cite[Lemma 2.12 and 2.13]{ATHM}).
\item Let $\P=\M_n(x)$ where $n>0$ and $x\in \bR$, and let $\d$ be the least Woodin cardinal of $\P$. Let $\eta<\d$. Then $\P|\eta$ is $\d+1$-iterable inside $\P$. This is because if $\T$ is a correct iteration of $\P|\eta$ of length $\leq \d$ then $\Q(\T)\insegeq \M_{n-1}(\c(\T))$ implying that the correct branch of $\T$ is in $\P$. 
\item We will need the following lemma. We continue with the $\P$ and $\d$ of clause 12, but the results we state are more general. 
\begin{lemma}\label{fbcut is weak fb cut} Suppose $\k$ is an $fb$-cut. Then $\k$ is a weak $fb$-cut.
\end{lemma}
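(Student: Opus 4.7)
\textbf{Plan for \rlem{fbcut is weak fb cut}.}
Let $\k$ be an $fb$-cut of $\P=\M_n(x)$ and suppose toward a contradiction that $\Q$ is a sound mouse in the fully backgrounded construction of $\P$ over ${\sf{Le}}^{\P|\k}$ (with all extenders of critical point $>\k$) such that $\rho_\omega(\Q)<\k$. Choose $\Q$ minimally in the construction order, so that every earlier stage $\Q'$ in this subconstruction satisfies $\rho_\omega(\Q')\geq \k$; in particular the starting model satisfies $\rho_\omega({\sf{Le}}^{\P|\k})=\k$, which is the natural consequence of the $fb$-cut hypothesis ${\sf{Le}}^{\P|\d_0}|\k={\sf{Le}}^{\P|\k}$ together with the fact that the fully backgrounded construction of $\P|\k$ reaches height $\k$ (otherwise the equality at stage $\k$ would fail).

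The first step is to place $\Q$ inside the global construction ${\sf{Le}}^{\P|\d_0}$. By clause~13 of \rrev{basic imt}, $\P|\eta$ is $\d_0+1$-iterable inside $\P$ for every $\eta<\d_0$, and the standard lift from background iterability to foreground iterability for fully backgrounded constructions (cf.\ \cite[Chapter 11]{FSIT}) shows that $\Q$ is $\d_0+1$-iterable in $\P$. Applying universality (clause~11 of \rrev{s constructions}) and noting that ${\sf{Le}}^{\P|\d_0}$ has no superstrong cardinal because $\P=\M_n(x)$ has only $n$ Woodins and no larger cardinals, we conclude $\Q\insegeq {\sf{Le}}^{\P|\d_0}$. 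Combined with the $fb$-cut equality ${\sf{Le}}^{\P|\d_0}|\k={\sf{Le}}^{\P|\k}$ and the fact that $\Q$ has ordinal height strictly greater than $\k$ (it is a nontrivial extension of the starting model using at least one new extender or level), we obtain $\Q={\sf{Le}}^{\P|\d_0}|\beta$ for some $\beta>\k$.

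The second step is the fine-structural contradiction. Every extender on $\vec{E}^\Q$ indexed above $\k$ has critical point $>\k$, so it acts trivially on $V_\k\cap \Q$. Consequently, no $\Sigma_m$-definition over a level $\Q|\gamma$ with $\gamma\in(\k,\beta]$ that invokes such extenders (together with parameters in $\Q|\gamma$) can produce a new subset of any ordinal $<\k$ beyond those already $\Sigma_m$-definable over ${\sf{Le}}^{\P|\k}$. A transfinite induction on $\gamma\in(\k,\beta]$, handling successor levels (rud-closure and extender-indexing) and limit levels separately, then yields $\rho_\omega(\Q|\gamma)\geq \k$ for all such $\gamma$, and in particular $\rho_\omega(\Q)\geq \k$, contradicting our assumption.

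The main obstacle is rigorously executing the fine-structural induction in the third step: a definition over $\Q|\gamma$ can refer to the new $\cp>\k$ extenders, their ultrapowers, and the iterated rud-closures built on top of ${\sf{Le}}^{\P|\k}$, so one must verify that any such $\Sigma_m$-definition of a subset of some $\mu<\k$ factors through a definition already available over ${\sf{Le}}^{\P|\k}$. The key ingredient is that the ultrapower map $\pi_E^{\Q|\gamma}$ for $E$ with $\cp(E)>\k$ is the identity on $\Q|\k={\sf{Le}}^{\P|\k}$, so the generators of $E$ contribute no new information about ordinals below $\k$; combining this with the minimality of $\Q$ (which gives the induction base $\rho_\omega({\sf{Le}}^{\P|\k})=\k$) closes the induction.
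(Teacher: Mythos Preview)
Your first step---using iterability and universality (clause~11 of \rrev{s constructions}) to get $\Q\insegeq {\sf{Le}}^{\P|\d_0}$---is exactly what the paper does. After that, however, the paper finishes in one line and you head into a flawed fine-structural induction.

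The point you are missing is that $\k$ is a $\P$-cardinal; this is part of \rdef{fbcut}, and you never invoke it. Since ${\sf{Le}}^{\P|\d_0}\subseteq \P$, $\k$ is a cardinal of ${\sf{Le}}^{\P|\d_0}$ as well, and in any premouse no level extending past a cardinal projects strictly below it. So once $\Q\insegeq {\sf{Le}}^{\P|\d_0}$ with ${\sf{Ord}}\cap\Q\geq\k$, you immediately get $\rho_\omega(\Q)\geq\k$. That is the entire content of the paper's second sentence.

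Your proposed ``second step'' is not just unnecessary but incorrect as stated. The claim that rud-closure together with extenders of critical point $>\k$ cannot produce new subsets of ordinals $<\k$ is false: rud-closure alone (the $\mathcal{J}$-operator, with no extenders at all) routinely defines new bounded subsets---this is precisely how levels of $L$ project to $\omega$. The observation that $\pi_E$ fixes $V_\k$ pointwise when $\cp(E)>\k$ says nothing about what becomes first-order definable at later $\mathcal{J}$-levels built over the ultrapower. Your induction hypothesis ``$\rho_\omega(\Q|\gg)\geq\k$ for all $\gg<\b$'' does not propagate to $\gg=\b$ without an external reason (such as $\k$ being a cardinal of an ambient model containing $\Q$), which is exactly the ingredient the paper supplies and you omit.
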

\begin{proof} If $\Q$ is an ${\sf{Le}}^{\P|\k}$-premouse which is constructed by the fully backgrounded construction of $\P|\d$ done over ${\sf{Le}}^{\P|\k}$ then by universality $\Q\insegeq {\sf{Le}}^{\P|\d}$ (see clause 11 above). Since $\k$ is an $fb$-cut, we have that $\rho_{\omega}(\Q)\geq \k$. Hence, $\k$ is a weak $fb$-cut.
\end{proof}
\item We will need the following lemma. We continue with the $\P$ and $\d$ as in clause 12.
\begin{lemma}\label{coherence of fb} Let $E\in \vec{E}^{\P|\d}$ be a total extender such that $\nu(E)$ is an inaccessible cardinal of $\P$. Then for any $\tau<\nu(E)$,  $\tau$ is a weak $fb$-cut in $\P$ if and only if $\tau$ is a weak $fb$-cut in $Ult(\P, E)$.
\end{lemma}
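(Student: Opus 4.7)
The plan is to reduce the biconditional to a comparison of two fully backgrounded constructions over a common premouse. By the coherence axiom for extender sequences of premice, the hypothesis $E \in \vec{E}^{\P|\d}$ with natural length $\nu(E)$ gives $\P|\nu(E) = Ult(\P,E)|\nu(E)$. In particular, since $\tau < \nu(E)$, we have ${\sf{Le}}^{\P|\tau} = {\sf{Le}}^{Ult(\P,E)|\tau}$; denote this common premouse by $\N_\tau$.

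I would next compare the fully backgrounded construction $\mathcal{C}_\P$ inside $\P$ over $\N_\tau$ using background extenders with critical points $>\tau$, with the analogous construction $\mathcal{C}_{Ult}$ inside $Ult(\P,E)$. The key claim is that $\mathcal{C}_\P$ and $\mathcal{C}_{Ult}$ produce the same sequence of models at every stage whose output has height $<\nu(E)$. This rests on two points: $\P$ and $Ult(\P,E)$ agree below $\nu(E)$, so a candidate extender of length $<\nu(E)$ has the same interpretation in both ambient models; and since $\nu(E)$ is inaccessible in $\P$, the existence of a background extender for such a candidate reflects to one of index $<\nu(E)$, which lies in the common part $\vec{E}^\P \restriction \nu(E) = \vec{E}^{Ult(\P,E)} \restriction \nu(E)$.

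To finish, suppose $\tau$ fails to be a weak $fb$-cut in $\P$ and fix $\Q$ appearing in $\mathcal{C}_\P$ with $\rho_\omega(\Q) < \tau$. The core $\mathfrak{C}(\Q)$ also appears in $\mathcal{C}_\P$ and inherits the projectum of $\Q$; being sound over $\N_\tau$ with generating parameter of size $\leq |\tau|$, its ordinal height lies below the inaccessible cardinal $\nu(E)$. By the matching established above, $\mathfrak{C}(\Q)$ also appears in $\mathcal{C}_{Ult}$, witnessing that $\tau$ is not a weak $fb$-cut in $Ult(\P,E)$; the reverse direction is symmetric. The principal obstacle is justifying the reflection step: the fbc rule prescribes specific certificate extenders whose canonical index might \emph{a priori} exceed $\nu(E)$, where $\vec{E}^\P$ and $\vec{E}^{Ult(\P,E)}$ diverge. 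I expect to resolve this by noting that the action of a background $F$ on a candidate extender $E'$ of length $<\nu(E)$ depends only on $F \cap \P|(\lh(E')+\omega)$, which lives inside $\P|\nu(E) = Ult(\P,E)|\nu(E)$, and then applying inaccessibility of $\nu(E)$ to produce an equivalent background of index below $\nu(E)$.
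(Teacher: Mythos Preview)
Your approach has a genuine gap, and it differs from the paper's route. Your key matching claim is that $\mathcal{C}_\P$ and $\mathcal{C}_{Ult}$ agree at stages whose output has height $<\nu(E)$, but the constructions in question are the fully backgrounded constructions of $\P|\d$ and of $Ult(\P,E)|\d$ over $\N_\tau$, and these run all the way up to $\d>\nu(E)$. Your reflection-of-certificates argument (even if it goes through) only keeps the two constructions synchronized as long as \emph{every} model so far has height $<\nu(E)$. Once either construction first reaches height $\nu(E)$, the ambient extender sequences $\vec{E}^{\P}$ and $\vec{E}^{Ult(\P,E)}$ diverge and the stage-by-stage agreement is lost. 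A witness $\Q$ to the failure of weak $fb$-cut may very well be produced at a late stage $\xi$ with $N_\xi$ of height $\geq\nu(E)$, after the two constructions have already parted ways; its core $\mathfrak{C}(\Q)$ is indeed small, but nothing in your argument forces $\mathfrak{C}(\Q)$ to occur as a model of $\mathcal{C}_{Ult}$. The obstacle you identify is real, and your proposed resolution only handles the pre-divergence portion of the construction.

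The paper bypasses stage-matching entirely. It uses instead that any $\Q$ built by the backgrounded construction of one model is $\d+1$-iterable inside the other, because the branch oracle $\T\mapsto \powerset(\d(\T))\cap \M_{n-1}(\c(\T))$ is definable over either model at the level of $(\d^+)$ (this is the content of clause~12 and the footnote in the converse direction). Universality (clause~11) then forces $\Q$ to appear in the backgrounded construction of the other model over $\K$, and the weak $fb$-cut hypothesis there gives $\rho_\omega(\Q)\geq\tau$. This is a global comparison argument rather than a local one: it compares the \emph{outputs} of the two constructions via iterability, not their stage-by-stage development.
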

\begin{proof} Assume first that $\tau$ is an inaccessible weak $fb$-cut in $\P$. Let $\K={\sf{Le}}^{\P|\tau}$. Suppose $\Q$ is a $\K$-premouse constructed by the fully backgrounded construction of $Ult(\P, E)|\d$ done over $\K$ and $\rho_\omega(\Q)=\tau$\footnote{Recall that if $\Q$ is a $\K$-premouse then we put all elements of $\K$ in all fine structural hulls. Also, our $\Q$ can also be considered as a premouse.}. Since $\Q$ is $\d+1$-iterable, universality implies that $\Q$ is constructed by the fully backgrounded construction of $\P|\d$ done over $\K$. Hence, considering $\Q$ as a premouse, $\rho_\omega(\Q)\geq \tau$. Thus, $\tau$ is a weak $fb$-cut in $Ult(\P, E)$.

Conversely, suppose $\tau$ is an inaccessible weak $fb$-cut in $Ult(\P, E)$. Again, let $\K={\sf{Le}}^{\P|\tau}$. Suppose $\Q$ is a $\K$-premouse constructed by the fully backgrounded construction of $\P|\d$ done over $\K$ and $\rho_\omega(\Q)=\tau$. Then $\Q$ is $\d+1$-iterable in $Ult(\P, E)$\footnote{This is because if $\T$ is a correct iteration of $\Q$ above $\tau$ then $Ult(\P, E)$ can find the correct branch of $\T$ as the function $\T\mapsto (\powerset(\d(\T))\cap \M_{n-1}(\c(\T)))$ is definable over $Ult(\P, E)|(\d^+)^\P$.}. Hence, again universality implies that, considering $\Q$ as just a premouse, $\rho_\omega(\Q)\geq \tau$.
\end{proof}

 \item $\sf{Cond}$ is the following statement in the language of premice.  $\dot{\V}$ is used for the universe. \\\\
${\sf{Cond:}}$ Suppose $\d$ is the least Woodin cardinal, $\k<\d$ is the least $<\d$-strong cardinal (as witnessed by the extender sequence), $\gg>\d$ is an inaccessible cardinal, $\phi$ is a formula and $\dot{\V}|\gg\models \phi[\d, \vec{a}]$ where $\vec{a}\in [\dot{\V}|\d]^{<\omega}$. There is then $\a<\b<\k$ and $\vec{b}\in [\dot{\V}|\a]^{<\omega}$ such that 
\begin{enumerate}
\item $\dot{\V}|\b\models ``\a$ is the least Woodin cardinal",
\item $\vec{b}\in [\dot{\V}|\a]^{<\omega}$,
\item $\dot{\V}|\b\models \phi[\a, \vec{b}]$, and
\item $\a$ is an inaccessible cardinal of $\dot{\V}$,
\item $\a$ is an $fb$-cut of $\dot{\V}$.
\end{enumerate}

We will need the following lemma. $\P$ and $\d$ are as in clause 12 above. Thus, $\P=\M_n(x)$ where $n>0$ ,  $x\in \bR$, and $\d$ is the least Woodin cardinal of $\P$.
\begin{lemma}\label{getting an fbcut} $\P\models {\sf{Cond}}$.
\end{lemma}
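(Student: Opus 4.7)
The plan is to establish $\sf{Cond}$ for $\P = \M_n(x)$ by a condensation argument, combined with universality of the fully backgrounded $Le$-construction to verify the $fb$-cut clause. Fix an instance: a formula $\phi$, a parameter $\vec{a} \in [\P|\d]^{<\omega}$, and an inaccessible $\gg > \d$ in $\P$ with $\P|\gg \models \phi[\d, \vec{a}]$. Choose $\theta > \gg$ a $\P$-inaccessible, and take a countable elementary substructure $H \prec \P|\theta$ with $\{\d, \gg, \vec{a}\} \subseteq H$, closed under enough Skolem functions that the Mostowski collapse $\bar H$ of $H$ is a sound premouse. Let $\pi : \bar H \to H$ be the inverse of the collapse.

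The key step is to apply the condensation lemma for $\M_n(x)$-style premice (standard fine-structural theory, as in \cite[Chapter 9]{FSIT} and \cite{OIMT}), which yields $\bar H = \P|\b$ for some $\b < \omega_1^V$. Since $\k$ is $<\d$-strong in $\P$ and hence uncountable in $V$, we have $\b < \omega_1^V \leq \k$. Set $\a = \pi^{-1}(\d)$ and $\vec{b} = \pi^{-1}(\vec{a})$. Elementarity of $\pi$ gives $\a < \b < \k$, $\vec{b} \in [\P|\a]^{<\omega}$, $\P|\b \models$ ``$\a$ is the least Woodin cardinal'', $\P|\b \models \phi[\a, \vec{b}]$, and $\a$ is inaccessible in $\P$ (the last transfers from $\P|\b$ once $\bar H = \P|\b$ is an initial segment, since then cardinals of $\P|\b$ remain cardinals of $\P$).

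Finally, for the $fb$-cut clause, I would invoke universality. Since $\a$ is inaccessible and $\P|\a$ is $\d+1$-iterable inside $\P$ (clause 12 of Review~\ref{s constructions}), ${\sf{Le}}^{\P|\a}$ is an iterable $\a$-mouse of height $\a$; by universality of the fully backgrounded construction (clause 11 of Review~\ref{s constructions}), and using that $\M_n(x)$ has no superstrong cardinals, ${\sf{Le}}^{\P|\a} \insegeq {\sf{Le}}^{\P|\d}$, and matching heights yields ${\sf{Le}}^{\P|\d}|\a = {\sf{Le}}^{\P|\a}$. The main obstacle is the condensation step: $H$ must be chosen at the correct level of elementarity and closure so that $\bar H$ is genuinely an initial segment of $\P$ (as opposed to, e.g., an ultrapower of one or a phalanx), which in the standard fine-structural framework amounts to arranging soundness of the collapse; a secondary subtlety is ensuring that the inaccessibility and Woodinness statements pulled back to $\P|\b$ are absolute enough to transfer to the relevant properties in $\P$.
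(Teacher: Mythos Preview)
Your argument has a genuine gap at the crucial step where you conclude $\b<\k$. You claim that ``$\k$ is $<\d$-strong in $\P$ and hence uncountable in $V$, we have $\b < \omega_1^V \leq \k$.'' But $\P=\M_n(x)$ is a mouse: $\M_n^\#(x)$ is a \emph{countable} premouse in $V$, and $\k<\d<{\sf{Ord}}\cap\M_n^\#(x)$, so $\k$ is a countable ordinal in $V$. The fact that $\k$ is $<\d$-strong is an internal property of $\P$ and says nothing about its size in $V$. So taking a $V$-countable hull gets you nowhere near ``below $\k$.'' Relatedly, even if condensation gave $\bar H=\P|\b$, your claim that ``cardinals of $\P|\b$ remain cardinals of $\P$'' is unjustified: an arbitrary initial segment $\P|\b$ need not compute cardinals correctly, so $\a$ being inaccessible in $\P|\b$ does not make it inaccessible in $\P$.

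The paper's proof proceeds differently and uses the strongness of $\k$ in an essential way. First, working \emph{inside} $\P$, one takes a hull $\pi:\Q\rightarrow\P|\gg$ with $\pi\rest\tau=id$ and $\tau=\pi^{-1}(\d)$ chosen to be genuinely inaccessible in $\P$ (this is arranged by choosing the hull so that $H\cap\d=\tau$ for a $\P$-inaccessible $\tau$; since $\d$ is Woodin there are plenty). The elementarity of $\pi$ together with $\pi\rest\tau=id$ then directly gives ${\sf{Le}}^{\P|\tau}={\sf{Le}}^{\P|\d}|\tau$, so $\tau$ is an $fb$-cut --- your universality idea for this part is plausible but is not what the paper does, and in any case it requires $\a$ to be a genuine $\P$-inaccessible, which you had not secured. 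The problem now is only that $\tau$ may lie above $\k$. To push below $\k$, the paper picks an extender $E\in\vec{E}^\P$ with $\cp(E)=\k$ and $\lh(E)$ large enough that $\Q\in\P|\lh(E)$; in $Ult(\P,E)$ the witness $\Q$ now lies below $\pi_E(\k)$, and one checks (via \rlem{coherence of fb}) that $\tau$ remains an $fb$-cut there. Elementarity of $\pi_E$ then reflects the existence of such a witness down below $\k$ in $\P$. This second reflection step --- using an extender with critical point $\k$ --- is the missing idea in your attempt.
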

\begin{proof} Let $\N={\sf{Le}}^{\P|\d}$. Let $\d$ be the least Woodin cardinal of $\P$. Suppose $\gg$, $\phi$ and $a\in \P|\d$ are such that $\gg>\d$ is inaccessible and $\P|\gg\models \phi[\d, a]$. We can find $\Q\insegeq \P|\d$ and an elementary $\pi:\Q\rightarrow \P|\gg$ such that if $\tau=\pi^{-1}(\d)$ then $\tau$ is an inaccessible cardinal of $\P$, $\pi\rest \tau=id$ and $a\in \P|\tau$. We then have that $\tau$ is an $fb$-cut of $\P$. However, we do not know that $\tau<\k$ where $\k$ is the least $<\d$-strong cardinal of $\P$. 

We now want to show that there is $\R\insegeq \P|\k$ such that 
\begin{enumerate}
\item $\R$ has a Woodin cardinal and if $\nu$ is its least Woodin cardinal then $\nu$ is an inaccessible cardinal of $\P|\k$,
\item $\R|\nu=\P|\nu$ and $\nu$ is an $fb$-cut,
\item for some $b\in \R|\nu$, $\R\models \phi[\nu, b]$.
\end{enumerate}

To get such an $\R\insegeq \P|\l$, let $E\in \vec{E}^\P$ be such that $\Q\insegeq \P|\lh(E)$, $\cp(E)=\k$ and $\tau$ is a cutpoint in $Ult(\P, E)$. Then in $Ult(\P, E)$, $\Q$ has the properties that we look for except that we do not know that $\tau$ is an $fb$-cut in $Ult(\P, E)$. We now prove that in fact $\tau$ is an $fb$-cut in $Ult(\P, E)$.

It follows from \rlem{coherence of fb} that $\tau$ is a weak $fb$-cut in $Ult(\P, E)$. To see that $\tau$ is an $fb$-cut notice that the fully backgrounded construction of $Ult(\P, E)$ never adds extenders overlapping $\tau$, and so in fact this construction is the fully backgrounded construction of $Ult(\P, E)$ done over $\N|\tau$.
\end{proof}
\item We will need the following two extender sequences. 
 \begin{definition}\label{appropriate sequence} Suppose $\S$ is a translatable premouse (see clause 3 above) and $\tau$ is its least Woodin cardinal. Let $\N={\sf{Le}}^{\S|\tau}$. We then let $\mathcal{E}^\S_{le}$ be the set of all extenders $E\in \vec{E}^{\S|\tau}$ such that 
 \begin{enumerate}
 \item $\nu(E)$ is an inaccessible cardinal of $\S$,
 \item $\pi_E(\N)|\nu(E)=\N|\nu(E)$,
 \item $\cp(E)$ is an $fb$-cut.
 \end{enumerate}
 
 We let $\mathcal{E}_{sm}^\S$ be the set of $E\in \vec{E}^{\S|\tau}$ such that $\nu(E)$ is a measurable cardinal of $\S$ and $\cp(E)$ is a $<\tau$-strong cardinal of $\S$. 
 \end{definition}
 \begin{lemma}\label{e is appropriate} Continuing with $\S$ and $\tau$ as above, 
$\mathcal{E}_{ms}^\S$ is weakly appropriate and $\mathcal{E}^\S_{le}$ is appropriate (see \rdef{wap}).
 \end{lemma}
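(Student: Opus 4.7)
The plan is to verify the clauses of \rdef{wap} for each of $\mathcal{E}^\S_{sm}$ and $\mathcal{E}^\S_{le}$. The two inputs that will do most of the work are the coherence of $\vec{E}^\S$ and the observation, which follows from \rlem{strongs are fb} applied with $a=\emptyset$, that every $<\tau$-strong cardinal $\k$ of $\S$ satisfies $\N|\k={\sf{Le}}^{\S|\k}$ and hence is an $fb$-cut in the sense of \rdef{fbcut}.

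For $\mathcal{E}^\S_{sm}$ I would first note that clause (a) of \rdef{wap} is immediate since each $\nu(E)$ is measurable in $\S$. For clause (b), fixing $A\subseteq \tau$, I would use that $\tau$ is Woodin in $\S$ to produce a $<\tau$-strong cardinal $\kappa$ of $\S$ together with extenders on $\vec{E}^{\S|\tau}$ of critical point $\kappa$ reflecting $A$ arbitrarily high below $\tau$. Since $\tau$ is a limit of measurable cardinals of $\S$, a standard fine-structural selection then yields such an $E$ with $\nu(E)$ itself measurable in $\S$ and $A\cap\nu(E)=\pi_E(A)\cap\nu(E)$, so $E\in \mathcal{E}^\S_{sm}$.

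For $\mathcal{E}^\S_{le}$, clause (a) is again immediate. For clause (b), the extra requirements beyond the previous argument are that $\cp(E)$ be an $fb$-cut and that $\pi_E(\N)|\nu(E)=\N|\nu(E)$. The first is free since the $\kappa$ chosen above is $<\tau$-strong; the second follows from coherence, which gives $\pi_E(\vec{E}^\S)|\nu(E)=\vec{E}^\S|\nu(E)$ and hence that the fully backgrounded constructions run in $\S|\tau$ and in $Ult(\S,E)|\pi_E(\tau)$ agree below $\nu(E)$. For clause (c), given $E\in \mathcal{E}^\S_{le}$, I would note that $\pi_E(\mathcal{E}^\S_{le})=\mathcal{E}^{Ult(\S,E)}_{le}$ and then check, for each $F\in \vec{E}^{\S|\nu(E)}$, that the three defining conditions of $\mathcal{E}^\S_{le}$ coincide in $\S$ and in $Ult(\S,E)$: coherence gives $\S|\nu(E)=Ult(\S,E)|\nu(E)$, which handles inaccessibility of $\nu(F)$; the agreement $\pi_E(\N)|\nu(E)=\N|\nu(E)$ forces $\N$ and $\pi_E(\N)$ to agree below $\nu(F)$, matching the $\N$-coherence clause for $F$ on both sides; and the $fb$-cut clause for $\cp(F)$ is preserved by the same $\N$-agreement together with \rlem{coherence of fb} and \rlem{fbcut is weak fb cut}. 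The hard part will be the fine-structural selection of $E$ in clause (b) carrying the joint properties $\nu(E)$ measurable and $\cp(E)$ an $fb$-cut; everything else is routine coherence chasing.
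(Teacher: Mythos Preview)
Your plan is correct and matches the paper's approach: the paper also dismisses weak appropriateness of $\mathcal{E}^\S_{sm}$ and clauses (a), (b) for $\mathcal{E}^\S_{le}$ as routine, and concentrates on clause (c), verifying that each defining condition of $\mathcal{E}^\S_{le}$ transfers between $\S$ and $Ult(\S,E)$ using coherence together with the built-in agreement $\pi_E(\N)|\nu(E)=\N|\nu(E)$.

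One small correction: your appeal to \rlem{coherence of fb} and \rlem{fbcut is weak fb cut} for the $fb$-cut clause is off-target, since those lemmas concern \emph{weak} $fb$-cuts and would only yield that $\cp(F)$ is a weak $fb$-cut in $\S$. The argument you actually need (and which the paper gives) is the direct one you already gesture at with ``the same $\N$-agreement'': $\cp(F)$ an $fb$-cut in $Ult(\S,E)$ means $\pi_E(\N)|\cp(F)={\sf{Le}}^{Ult(\S,E)|\cp(F)}$; since $\pi_E(\N)|\cp(F)=\N|\cp(F)$ and $Ult(\S,E)|\cp(F)=\S|\cp(F)$, this reads $\N|\cp(F)={\sf{Le}}^{\S|\cp(F)}$, i.e.\ $\cp(F)$ is an $fb$-cut in $\S$. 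Drop the two lemma citations and the argument is exactly the paper's.
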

 \begin{proof} It is easy to show that $\mathcal{E}_{sm}^\S$ is weakly appropriate. Suppose then $E\in \mathcal{E}^\S_{le}$. Let $\N$ be the fully backgrounded construction of $\S|\tau$ and set $\mathcal{E}=\mathcal{E}^\S_{le}$. We want to see that $\pi_E(\mathcal{E})\cap (\S|\nu(E))=\mathcal{E}\cap (\S|\nu(E))$. 
 We have that $\pi_E(\N)$ is the fully backgrounded construction of $Ult(\S, E)|\tau$ and $\pi_E(\N)|\nu(E)=\N|\nu(E)$. 
 
 Suppose now that $F\in \pi_E(\mathcal{E})\cap (\S|\nu(E))$. It follows that\\\\
(1) $\nu(F)$ is an inaccessible cardinal in $Ult(\S, E)$,\\
(2) $\pi^{Ult(\S, E)}_F(\pi_E(\N))|\nu(F)=\pi_E(\N)|\nu(F)$,\\
 (3) $\cp(F)$ is an $fb$-cut in $Ult(\S, E)$.\\\\
 We want to see that \\\\
(4) $\nu(F)$ is an inaccessible cardinal in $\S$,\\
(5) $\pi_F(\N)|\nu(F)=\N|\nu(F)$,\\
(6) $\cp(F)$ is an $fb$-cut in $\S$.\\\\
(4) is easy as $\nu(E)>\nu(F)$ and $\nu(E)$ is inaccessible in both $\S$ and $Ult(\S, E)$. To see (5) notice that because \begin{center}$\pi_F\rest (\S|(\cp(F)^+)^\S)=\pi_F^{Ult(\S, F)}\rest (Ult(\S, F)|(\cp(F)^+)^\S)$\end{center} and because of (2) we have that $\N|\nu(F)=\pi_F(\N)|\nu(F)$.

We now want to show (6) and we have that $\cp(F)$ is an $fb$-cut in $Ult(\S, F)$. It follows that $\pi_E(\N)|\cp(F)$ is the fully backgrounded construction of $Ult(\S, F)|\cp(F)$. But $\pi_E(\N)|\cp(F)=\N|\cp(F)$ and $Ult(\S, F)|\cp(F)=\S|\cp(F)$. Therefore, 
\begin{center}$\N|\cp(F)$ is the fully backgrounded construction of $\S|\cp(F)$. \end{center}
The proof that $\mathcal{E}\cap (\S|\nu(E))\subseteq \pi_E(\mathcal{E})\cap (\S|\nu(E))$ is very similar.
 \end{proof}
 The same proof can be used to show that the following also holds.

 \begin{lemma}\label{e is appropriateA} Continuing with $\S$ and $\tau$ as above, 
if $E\in \mathcal{E}^\S_{le}$ then for all $\xi<\nu(E)$, $\S\models ``\xi$ is an $fb$-cut" if and only if $Ult(\S, E)\models ``\xi$ is an $fb$-cut". 
 \end{lemma}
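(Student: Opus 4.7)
The plan is to mimic the argument establishing clause (6) from clause (3) in the proof of \rlem{e is appropriate}, applied at an arbitrary $\xi<\nu(E)$ rather than at the critical point of some additional extender $F$. The central ingredient we draw from $E\in\mathcal{E}^\S_{le}$ is clause (b) of \rdef{appropriate sequence}, which gives $\pi_E(\N)|\nu(E)=\N|\nu(E)$ and hence $\pi_E(\N)|\xi=\N|\xi$ since $\xi<\nu(E)$. By elementarity of $\pi_E$, $\pi_E(\N)$ is the fully backgrounded construction of $Ult(\S,E)|\pi_E(\tau)$, so unpacking the definition of $fb$-cut, the lemma reduces to proving that $\N|\xi={\sf{Le}}^{\S|\xi}$ if and only if $\N|\xi={\sf{Le}}^{Ult(\S,E)|\xi}$.

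For the forward direction, assume $\xi$ is an $fb$-cut in $\S$, so $\N|\xi={\sf{Le}}^{\S|\xi}$. \rlem{fbcut is weak fb cut} gives that $\xi$ is a weak $fb$-cut in $\S$, and \rlem{coherence of fb} transfers this to $Ult(\S,E)$, so no mouse $\Q$ appearing in the fully backgrounded construction of $Ult(\S,E)$ done over $\N|\xi$ has $\rho_\omega(\Q)<\xi$. Combined with universality (clause 11 of \rrev{s constructions}), this forces ${\sf{Le}}^{Ult(\S,E)|\xi}$ to be an initial segment of $\pi_E(\N)$, and below $\xi$ this is just $\pi_E(\N)|\xi=\N|\xi$. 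A stationarity argument of the type used to derive clause (6) in \rlem{e is appropriate} then identifies ${\sf{Le}}^{Ult(\S,E)|\xi}$ with $\N|\xi$ itself. The reverse direction is symmetric, using the other half of \rlem{coherence of fb}.

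The main obstacle is handling $\xi$ in the range $\cp(E)<\xi<\nu(E)$, where $\S|\xi$ and $Ult(\S,E)|\xi$ are literally different structures, so the two fully backgrounded constructions could a priori diverge. The coherence $\pi_E(\N)|\nu(E)=\N|\nu(E)$ is what rules this out: since the constructions already agree at the higher level $\nu(E)$, their agreement at any cardinal cut $\xi<\nu(E)$ is controlled only by whether they stabilize (i.e., whether $\xi$ is an $fb$-cut), and this stabilization is recorded identically in $\N|\xi=\pi_E(\N)|\xi$. Once this is granted, the implication in each direction is driven by the weak $fb$-cut transfer of \rlem{coherence of fb} plus universality, exactly as in the previous lemma.
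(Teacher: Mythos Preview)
Your proposal is built around a misconception. You identify as the ``main obstacle'' the case $\cp(E)<\xi<\nu(E)$, where you claim $\S|\xi$ and $Ult(\S,E)|\xi$ are ``literally different structures''. They are not. The coherence axiom in the definition of a fine extender sequence gives $\S|\lh(E)=Ult(\S,E)|\lh(E)$, and since $\xi<\nu(E)\leq\lh(E)$ we have $\S|\xi=Ult(\S,E)|\xi$ outright. Hence ${\sf{Le}}^{\S|\xi}={\sf{Le}}^{Ult(\S,E)|\xi}$ trivially, because the two constructions are run in the same structure.

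The paper's proof is then a three-line chain: setting $\K={\sf{Le}}^{\S|\xi}={\sf{Le}}^{Ult(\S,E)|\xi}$ and using $\N|\xi=\pi_E(\N)|\xi$ (clause (b) of \rdef{appropriate sequence}), one has
\[
\S\models``\xi\text{ is an }fb\text{-cut}"\iff \K=\N|\xi\iff \K=\pi_E(\N)|\xi\iff Ult(\S,E)\models``\xi\text{ is an }fb\text{-cut}".
\]
Your detour through weak $fb$-cuts, \rlem{coherence of fb}, and universality is unnecessary, and in fact your argument quietly uses the very identification you claim to be avoiding: when you write ``the fully backgrounded construction of $Ult(\S,E)$ done over $\N|\xi$'', the definition of weak $fb$-cut in $Ult(\S,E)$ refers to the construction over ${\sf{Le}}^{Ult(\S,E)|\xi}$, and equating that with $\N|\xi$ already presupposes $\S|\xi=Ult(\S,E)|\xi$. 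Once you grant coherence, the whole apparatus collapses to the paper's direct argument.
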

 \begin{proof} Let $\N$ be as in the above proof. We have that $\pi_E(\N)|\nu(E)=\N|\nu(E)$. Set $\K=\sf{Le}^{\S|\xi}$. We have that $\K=\sf{Le}^{Ult(\S, E)|\xi}$. Thus, the following equivalence holds.
 \begin{align*}
\S\models ``\xi\ \text{is an} fb\text{-cut}" &\iff \K=\N|\xi \\ &\iff \K=\pi_E(\N)|\xi \\
&\iff Ult(\S, E)\models ``\xi\ \text{is an} fb\text{-cut}".  
\end{align*}
 \end{proof}
 The following corollary can now easily be proven by induction on the length of the iteration. 
 
 \begin{corollary}\label{cor to above} Continuing with $\S$ and $\tau$ as above, suppose $\T$ is a normal non-dropping (i.e. $\mathcal{D}^\T=\emptyset$) iteration of $\S$ such that for every $\a<\lh(\T)$, $E_\a^\T\in \pi_{0, \a}^\T(\mathcal{E}^\S_{le})$. Then for any $\a<\b<\lh(\T)$, $\M^\T_\b\models ``\cp(E_\a^\T)$ is an $fb$-cut".
 \end{corollary}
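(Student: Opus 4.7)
The plan is to induct on $\b$ with $\a$ fixed, applying \rlem{e is appropriateA} at successor steps and using branch-embedding elementarity at limits. Set $\kappa = \cp(E_\a^\T)$.

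Base case ($\b = \a + 1$): Since $E_\a^\T \in \pi_{0,\a}^\T(\mathcal{E}^\S_{le}) = \mathcal{E}^{\M_\a^\T}_{le}$, the definition of $\mathcal{E}^\S_{le}$ in \rdef{appropriate sequence} gives $\M_\a^\T \models \kappa$ is an $fb$-cut. Taking $\xi = \kappa < \nu(E_\a^\T)$ and invoking \rlem{e is appropriateA} with $\S$ there being $\M_\a^\T$ and $E$ there being $E_\a^\T$ yields $Ult(\M_\a^\T, E_\a^\T) \models \kappa$ is an $fb$-cut. Because $\T$ is a normal non-dropping tree, $\M_{\a+1}^\T = Ult(\M_\gamma^\T, E_\a^\T)$ where $\gamma$ is the $T$-predecessor of $\a+1$; $\M_\gamma^\T$ and $\M_\a^\T$ agree well past $\kappa^+$, which suffices to transfer the conclusion to $\M_{\a+1}^\T$.

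Successor step ($\b = \gamma + 1$, $\gamma > \a$): The induction hypothesis gives $\M_\gamma^\T \models \kappa$ is an $fb$-cut. Elementarity of $\pi_{0,\gamma}^\T$ places $E_\gamma^\T$ in $\mathcal{E}^{\M_\gamma^\T}_{le}$, and normality gives $\kappa < \nu(E_\gamma^\T)$ (the natural lengths are strictly increasing along a normal tree). So \rlem{e is appropriateA} applied to $\M_\gamma^\T$ and $E_\gamma^\T$ with $\xi = \kappa$ gives the conclusion, modulo the same $T$-predecessor agreement used in the base case.

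Limit step: Let $b$ be the branch to $\b$. In a normal MS tree, the critical points of extenders used along $b$ are monotonically increasing, and $b$ is cofinal in $\b$, so we can choose $\delta \in b$ with $\delta > \a$ such that $\cp(E_{\delta'}^\T) > \kappa$ for every $\delta' \in b$ with $\delta \leq \delta'$ and $\delta' + 1 \in b$. Then $\cp(\pi_{\delta,\b}^\T) > \kappa$, so $\pi_{\delta,\b}^\T(\kappa) = \kappa$, and elementarity transfers the induction hypothesis to $\M_\b^\T$. The only nontrivial bookkeeping is the tree-agreement step at successors, where one must ensure that although $E_\gamma^\T$ is sourced from $\M_\gamma^\T$ it is applied to the $T$-predecessor model; this is standard for normal non-dropping MS iterations.
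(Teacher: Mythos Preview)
Your overall induction structure and the limit case are fine, but the successor step (including the base case) has a real gap at the ``agreement'' transfer. You apply \rlem{e is appropriateA} to the pair $(\M_\ga^\T, E_\ga^\T)$ (or $(\M_\gg^\T, E_\gg^\T)$) and obtain that $\k$ is an $fb$-cut in $Ult(\M_\ga^\T, E_\ga^\T)$; you then want to transfer this to $\M_{\ga+1}^\T = Ult(\M_{\gg'}^\T, E_\ga^\T)$ where $\gg' = T\text{-pred}(\ga+1)$, invoking that $\M_{\gg'}^\T$ and $\M_\ga^\T$ agree past $\k^+$. But ``$\k$ is an $fb$-cut'' is not a local property: by \rdef{fbcut} it asserts ${\sf{Le}}^{\P|\d_0}|\k = {\sf{Le}}^{\P|\k}$, which depends on the fully backgrounded construction of $\P$ all the way up to its least Woodin. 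The ultrapowers $Ult(\M_\ga^\T, E_\ga^\T)$ and $Ult(\M_{\gg'}^\T, E_\ga^\T)$ agree only up to $\lh(E_\ga^\T)$, far below their Woodins, so agreement of the base models below $\k^+$ does not transfer the conclusion.

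The fix is to strengthen the induction hypothesis. One clean formulation: prove by induction on $\b$ that for every $\ga < \b$, $\N_\ga|\nu(E_\ga^\T) = \N_\b|\nu(E_\ga^\T)$, where $\N_\gx = \pi_{0,\gx}^\T(\N)$ is the ${\sf{Le}}$ of $\M_\gx^\T$. At the successor step $\b = \gg+1$ with $\gg' = T\text{-pred}(\b)$, the inductive hypothesis applied to the pair $(\gg',\gg)$ gives $\N_{\gg'}|(\cp(E_\gg^\T)^+) = \N_\gg|(\cp(E_\gg^\T)^+)$; this is exactly what is needed to see that $\N_\b|\nu(E_\gg^\T) = \pi_{E_\gg^\T}^{\M_{\gg'}^\T}(\N_{\gg'})|\nu(E_\gg^\T)$ equals $\pi_{E_\gg^\T}^{\M_\gg^\T}(\N_\gg)|\nu(E_\gg^\T) = \N_\gg|\nu(E_\gg^\T)$ (the last equality by condition (b) of $\mathcal{E}_{le}$). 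From this stronger statement the corollary follows immediately, since $\M_\ga^\T|\k = \M_\b^\T|\k$ by normal-tree agreement and hence both sides of the $fb$-cut equation transfer. Your limit-stage argument is correct and needs no change.
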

\item We will need the following lemma.
\begin{lemma}\label{generic over le} Suppose $\S$ is a premouse and $\tau$ is its least Woodin cardinal. Let $\N={\sf{Le}}^{\S|\tau}$ and assume that ${\sf{Ord}}\cap \N=\tau$. Suppose $\a<\b$ are such that 
\begin{enumerate}
\item $\a$ is an $fb$-cut in $\S$ and
\item $\S|\b\models {\sf{ZFC}}+``\a$ is a Woodin cardinal".
\end{enumerate}
Suppose $t$ is a real which satisfies all the axioms of ${\sf{Ea}}^\S_{\tau, \mathcal{E}^\S_{le}}$ that are generated by the extenders in $\S|\a$. Let $\K={\sf{StrLe}}(\S|\b)$. Then $t$ is generic for ${\sf{Ea}}^{\K}_{\a, \mathcal{E}^\K_{sm}}$.
\end{lemma}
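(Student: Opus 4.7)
The plan is to reduce each axiom of $\sf{Ea}^{\K}_{\a, \mathcal{E}^\K_{sm}}$ to an axiom of $\sf{Ea}^\S_{\tau, \mathcal{E}^\S_{le}}$ generated by a background extender lying in $\S|\a$, and then invoke the hypothesis on $t$. First I set up the geometry: because $\a$ is an $fb$-cut, $\K|\a=\N|\a=\sf{Le}^{\S|\a}$, and from $\S|\b\models{\sf{ZFC}}+``\a\text{ is Woodin}"$ together with the coherence of the $S$-construction, $\a$ is a Woodin cardinal of $\K$. In particular every $E\in \vec{E}^{\K|\a}$ is produced by the fully backgrounded construction carried out inside $\S|\a$ and therefore admits a background extender $F\in \vec{E}^\S$ with $\lh(F)<\a$, $\cp(F)=\cp(E)$, and a factor map $\sigma:Ult(\N,E)\to \pi_F^\S(\N)$ having $\cp(\sigma)\geq \nu(E)$.

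Fix an axiom $A_{E,\vec\phi}$ of $\sf{Ea}^{\K}_{\a, \mathcal{E}^\K_{sm}}$, so $\nu(E)$ is measurable in $\K$, $\cp(E)$ is $<\a$-strong in $\K$, and $\vec\phi\in \K|(\cp(E)^+)^\K=\N|(\cp(E)^+)^\N$. I select $F$ as above but refine it to witness $F\in \mathcal{E}^\S_{le}$. Clauses~9 and~10 of \rrev{s constructions} together with \rlem{strongs are fb} turn the $<\a$-strongness of $\cp(E)$ in $\K$ into $<\tau$-strongness in $\S$ and hence into the $fb$-cut property of $\cp(F)=\cp(E)$ in $\S$. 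For the remaining clauses of \rdef{appropriate sequence}, I pick $F$ so that $\nu(F)$ is an inaccessible cardinal of $\S$ at a closure point of the backgrounded construction of $\S|\a$; standard stationarity/coherence of the Mitchell-Steel construction then yields $\pi_F^\S(\N)|\nu(F)=\N|\nu(F)$. Thus $F\in \mathcal{E}^\S_{le}\cap \S|\a$, and the hypothesis on $t$ gives $t\models A_{F,\vec\phi}$.

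Finally I deduce $t\models A_{E,\vec\phi}$. The factor map $\sigma$, being the identity below $\nu(E)$, forces $\pi_E^\K(\vec\phi)\rest \nu(E)=\pi_F^\S(\vec\phi)\rest \nu(E)$, which is a prefix of $\pi_F^\S(\vec\phi)\rest \nu(F)$. If $t\models \bigvee \vec\phi$, say $t\models \phi_\xi$ with $\xi<\lh(\vec\phi)\leq \cp(E)<\nu(E)$, then $\xi<\cp(E)=\cp(F)$ is fixed by both $\pi_E^\K$ and $\pi_F^\S$, so $\pi_E^\K(\vec\phi)(\xi)=\phi_\xi$ and $t\models \bigvee \pi_E^\K(\vec\phi)\rest \nu(E)$. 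Conversely, if $t\models \pi_E^\K(\vec\phi)(\eta)$ for some $\eta<\nu(E)$, then $t\models \pi_F^\S(\vec\phi)(\eta)$ with $\eta<\nu(F)$, and $A_{F,\vec\phi}$ delivers $t\models \bigvee \vec\phi$. This verifies the biconditional $A_{E,\vec\phi}$.

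The main obstacle is step~2: producing a background $F$ that lies in $\mathcal{E}^\S_{le}$ rather than merely serving as a Mitchell-Steel background. Condition (c) of \rdef{appropriate sequence} is handled by the structural argument above, but the inaccessibility of $\nu(F)$ and the coherence $\pi_F^\S(\N)|\nu(F)=\N|\nu(F)$ require that the fully backgrounded construction of $\S|\a$ has sufficiently many closure points below $\a$ where a suitable background can be drawn. This is where \rlem{coherence of fb} and \rlem{e is appropriateA} do the heavy lifting, allowing $F$ to be replaced, if necessary, by a higher-index background of the same short extender while preserving membership in $\mathcal{E}^\S_{le}$.
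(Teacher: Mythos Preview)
Your approach matches the paper's: pick $E\in\mathcal{E}^\K_{sm}$, lift to its background $F\in\vec{E}^{\S|\a}$, verify $F\in\mathcal{E}^\S_{le}$, and transfer the axiom via the factor map; the paper is equally terse on your ``main obstacle,'' writing only that ``$F$, since it coheres $\N$, is in $\mathcal{E}^\S_{le}$'' and citing clause~2 of \rrev{s constructions} for the axiom transfer.

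One minor correction to your route for clause~(c): clause~9 of \rrev{s constructions} only gives that $\cp(E)$ is $<\a$-strong in $\S|\b$ (hence in $\S$), not $<\tau$-strong, so you cannot invoke \rlem{strongs are fb} directly at the level of $\tau$. The paper instead applies \rlem{strongs are fb} inside $\S|\b$ (where $\a$ is Woodin) to get that $\cp(E)$ is an $fb$-cut in $\S|\a$, meaning ${\sf{Le}}^{\S|\a}|\cp(E)={\sf{Le}}^{\S|\cp(E)}$, and then chains this with the hypothesis that $\a$ is an $fb$-cut in $\S$ to conclude ${\sf{Le}}^{\S|\tau}|\cp(E)={\sf{Le}}^{\S|\cp(E)}$. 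Your conclusion is correct; only the intermediate justification needs this adjustment.
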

\begin{proof} Suppose $E\in \vec{E}^{\K|\a}$ such that $\nu(E)$ is a measurable cardinal of $\K$ and $\cp(E)$ is a $<\a$-strong cardinal of $\K$. Let $F$ be the background extender of $E$. It follows that $E=F\cap \K$. Moreover, since $\cp(E)$ is a strong cardinal in $\K|\a$, $\cp(E)$ is an $fb$-cut in $\S|\a$, and since $\a$ itself is an $fb$-cut, $\cp(E)$ is an $fb$-cut in $\S$. Also, $F$, since it coheres $\N$, is in $\mathcal{E}^\S_{le}$. Therefore, since there is a factor map $k:\pi_E^\N(\N|(\cp(E)^+)^\N)\rightarrow \pi_F^\S(\N|(\cp(E)^+)^\N)$ with $\cp(k)\geq \nu(E)$ and since $\pi_E^\N(\N|(\cp(E)^+)^\N)|\nu(E)=\N|\nu(E)$, any axiom generated by $E$ in $\K$ is satisfies by $z$ as it is also an axiom generated by $F$ (see clause 2 of \rrev{s constructions}). 
\end{proof}
\end{enumerate}

\begin{review}\label{sargpre}
\textbf{Review of \cite{SargPre}:}
\end{review}
Unless otherwise specified, we assume ${\sf{AD}}^{L(\bR)}$. The material reviewed below appears in \cite{SargPre} and in \cite{HODCoreModel}. Other treatments of similar concepts appear in \cite{Varsovian} and \cite{Sandra}.
\begin{enumerate}
\item $(u_i: i\in {\sf{Ord}})$ is the sequence of uniform indiscernibles for reals (assuming they exist).\footnote{Assume that every real has a sharp, and for each $x\in \bR$, let $C_x$ be the class of $x$-indiscernibles. Thus, $C_x$ is a class of indiscernibles for $L[x]$. We then say that $u$ is a uniform indiscernible if $u\in \cap_{x\in \bR}C_x$. Thus, the class of uniform indiscernibles is a club and contains all uncountable cardinals.} Set $s_{0}=\emptyset$ and for $m\geq 1$, $s_m=(u_0, ..., u_{m-1})$
\item Fix $n\in \omega$ and $x\in \bR$ and suppose $\P$ is a complete iterate of $\M_{n}(x)$. For $i\leq n$, $\d_i^\P$ is the $i+1$st Woodin of $\P$. Let 
\begin{center}
$\gg_m^\P=\sup(Hull_1^{\P|u_{m}}(s_m)\cap \d_0^\P)$.
\end{center}
Then $\sup_{m<\omega}\gg_m^\P=\d_0^\P$. Given two pairs $(\P, \a)$ and $(\Q, \b)$ such that $\P$ and $\Q$ are complete iterates of $\M_n(x)$, $\a<\gg_m^\P$ and $\b<\gg^\Q_m$, we write $(\P, \a)\leq^{n}_m (\Q, \b)$ if and only if letting $\R$ be the common iterate of $\P$ and $\Q$, $\pi_{\P, \R}(\a)\leq \pi_{\Q, \R}(\b)$. Clearly, $\leq^n_m$ depends on $x$, but not mentioning $x$ makes the notation simpler\footnote{But $\leq^n_m$ does not depend on the choice of indiscernibles.}.  We then have that there is a formula $\phi$ such that for all 
$(\P, \a)$ and $(\Q, \b)$, $(\P, \a)\leq^{n}_m (\Q, \b)$ if and only if
\begin{center}
$ \M_{n-1}(\M_n(x)^\#, (\P, \a), (\Q, \b))\models \phi[\M_n(x)^\#, (\P, \a), (\Q, \b), s_m]$.
\end{center}
The formula $\phi$ essentially records what was said above. It is the conjunction of the following statements:
\begin{enumerate}
\item $\P$ and $\Q$ are complete iterates of $\M_n(x)$.
\item There is $\R$, which is a common iterate of $\P$ and $\Q$ and $\pi_{\P, \R}(\a)\leq \pi_{\Q, \R}(\b)$.
\end{enumerate}
Explaining the above equivalence is beyond the scope of this paper. The reader can consult \cite{SargPre}, \cite{HODCoreModel}, \cite{Varsovian} and \cite{Sandra}. Essentially, the following happens. 

Suppose $\P$ is a complete iterate of $\M_n(x)$. Then, there is an iteration tree $\T\in \M_{n-1}(\M_n(x)^\#, (\P, \a), (\Q, \b))$ such that $\T$ is an iteration tree on $\M_{n}(x)^\#$ according to its unique $(\omega_1+1$-iteration strategy and such that either $\T$ has a last model which is $\P$ or $\P=L(\c(\T))$ and $\d(\T)$ is the largest Woodin cardinal of $\P$ (such iteration trees are called \textit{maximal}). This is the fact used to identify complete iterates of $\M_n(x)^\#$ inside models of the form $\M_{n-1}(\M_n(x)^\#, d)$. 

Suppose next that there is $\R$ such that $\pi_{\P, \R}(\a)\leq \pi_{\Q, \R}(\b)$. We can assume that $\R$ is obtained via the usual comparison procedure of \cite{OIMT}, which means that $\R\in \M_{n-1}(\M_n(x)^\#, (\P, \a), (\Q, \b))$. One then shows that $$\M_{n-1}(\M_n(x)^\#, (\P, \a), (\Q, \b))$$ can correctly compute $\pi_{\P, \R}\rest \gamma^\P_m$ and $\pi_{\Q, \R}\rest \gamma_m^\Q$, even though it cannot in general compute $\pi_{\P, \R}$ and $\pi_{\Q, \R}$.

\item For $x\in \bR$ and $n\in \omega$, set $\gg^{2n+1}_{m, x, \infty}=\pi_{\M_{2n+1}(x), \infty}(\gg_m^{\M_{2n+1}(x)})$.
\item For $x\in \bR$ and $n\in \omega$, set $b_{2n+1, m}=\sup_{x\in \bR}\gg_{m, x, \infty}^{2n+1}$.
\item Let $\k^1_{2n+1}$ be the predecessor of $\d^1_{2n+1}$ (for example see \cite[Theorem 2.18]{Jackson}). It follows from \cite{SargPre} that for each $n$, $\sup_{m\in \omega}b_{2n+1, m}=\k^1_{2n+3}$. In fact, for each $x\in \bR$, $\sup_{m<\omega}\gg_{m, x, \infty}^{2n+1}=\k^1_{2n+3}$. Moreover, for each $n, m\in \omega$, $b_{2n+1, m}$ is a cardinal and $b_{2n+1, 0}>\d^1_{2n+1}$. Thus, $b_{2n+1, 0}\geq \d^1_{2n+2}$. For the proofs of these results see \cite[Theorem 4.1, Corollary 5.23, Lemma 6.1]{SargPre}.
\item It is conjectured in \cite{SargPre} that for all $n$, $b_{2n+1, 0}=\d^1_{2n+2}$. For $n=0$ this is shown in \cite{BoundHjorth}. The case $n\geq 1$ is still open.
\end{enumerate}

\begin{review}\label{gamma stable} \textbf{$\gg$-stability:}
\end{review}
The main technical fact from \cite{SargPre} that we will need in this paper appears in the bottom of page 760 of  \cite{SargPre}. It claims that for each $x\in \bR$, each $n\in \omega$ and for each $\gg<\k^1_{2n+3}$, there is a  $\gg$-stable complete iterate $\P$ of $\M_{2n+1}(x)$. Because our situation is just a little bit different we outline how to obtain $\gg$-stable iterates. The reader may find it useful as well.\\\\ 
\textbf{$\gg$-stable iterates:}
\begin{enumerate}
\item Fix $n\in \omega$ and $x\in \bR$. Let $\M=\M_{2n+1}(x)$ and suppose that $\M^\#_{2n+1}\in \M$\footnote{Any reader who is familiar with the nuts and bolts of inner model theory can see that this condition is not necessary.}. Suppose $\gg$ is an ordinal such that for some $m_0$, $\gg<\gg^{2n+1}_{m_0, \infty}$. Let $\P$ be a complete iterate of $\M$.
\item Let $\nu$ be the least inaccessible of $\P$ above $\d_0^\P$ and let $\mH^\P$ be the direct limit of all iterates of $\M_{2n+1}$ that are in $\P|\nu$\footnote{We say $\Q$ is an iterate of $\M_{2n+1}$ in $\P|\nu$ if $\Q$ is a complete iterate of $\M_{2n+1}$ and letting $\d$ be the largest Woodin cardinal of $\Q$, $\Q|\d\in \P|\nu$.}. The construction of such limits has been carried out in \cite{SargPre}, \cite{Sandra}, \cite{Varsovian}. For example see \cite[Section 5.1]{SargPre}.
\item The results of \cite{SargPre} and other similar calculations done for example in \cite{HODCoreModel} show that $\mH^\P$ is a complete iterate of $\M_{2n+1}$. 
\item We say $\P$ is locally $\gg$-stable if $\gg\in \rge(\pi_{\mH^\P, \infty})$.
\item We say $\P$ is $\gg$-stable if $\P$ is locally $\gg$-stable and if $\xi=\pi_{\mH^\P, \infty}^{-1}(\gg)$ then whenever $\Q$ is a complete iterate of $\P$, 
\begin{center}
$\pi_{\mH^\Q, \infty}(\pi_{\P, \Q}(\xi))=\gg$.
\end{center}
\item The argument on page 760 of  \cite{SargPre} can be used to show the following lemma.  
\begin{lemma}\label{stable p} There is a complete $\gg$-stable iterate $\P$ of $\M_{2n+1}(x)$.
\end{lemma}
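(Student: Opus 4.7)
The plan is to produce $\P$ in two stages: first arrange local $\gg$-stability by iterating $\M$ far enough, then iterate further to stabilize the preimage of $\gg$ via a well-foundedness argument, following the template on page 760 of \cite{SargPre}.

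For the local stage, the relevant fact is that the internal direct limits $\mH^\Q$ converge to $\M_\infty(\M_{2n+1})$ as $\Q$ ranges over complete iterates of $\M$. More precisely, every countable iterate $\R$ of $\M_{2n+1}$ can be realized as an iterate sitting inside $\Q|\nu^\Q$ for a suitable complete iterate $\Q$ of $\M$, so the directed system of the $\mH^\Q$'s is cofinal in the directed system of countable iterates of $\M_{2n+1}$. Since $\gg < \gg^{2n+1}_{m_0,\infty}$ for some $m_0$, the ordinal $\gg$ lies in the range of $\pi_{\R,\infty}$ for cofinally many such $\R$, and hence in the range of $\pi_{\mH^\P,\infty}$ for a sufficiently far iterate $\P$, giving local $\gg$-stability.

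For the stabilization stage, fix a locally $\gg$-stable $\P_0$ with witness $\xi_0 := \pi_{\mH^{\P_0},\infty}^{-1}(\gg)$. For any complete iterate $\Q$ of $\P_0$, the image $\pi_{\P_0,\Q}(\mH^{\P_0})$ is an iterate of $\M_{2n+1}$ inside $\Q|\nu^\Q$, and commutativity of the various direct limit maps yields
\[
\xi(\Q) := \pi_{\mH^\Q,\infty}^{-1}(\gg) \;=\; \pi_{\pi_{\P_0,\Q}(\mH^{\P_0}),\,\mH^\Q}\bigl(\pi_{\P_0,\Q}(\xi_0)\bigr) \;\geq\; \pi_{\P_0,\Q}(\xi_0),
\]
with $\gg$-stability of $\P_0$ being the assertion that equality holds for every $\Q$. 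If equality fails at some $\Q$, we replace $(\P_0,\xi_0)$ by $(\Q,\xi(\Q))$ and repeat. An auxiliary ordinal (for instance, the image of $\xi(\Q)$ under the external iteration map $\pi_{\Q,\infty}:\Q\to\M_\infty(\M)$) is strictly monotone along this process while remaining bounded above by a quantity fixed by the initial data; by well-foundedness the process terminates, and the terminal iterate $\P$ is $\gg$-stable.

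The main obstacle I anticipate is the choice of the auxiliary well-founded ordinal that forces termination, since the natural target $\M_\infty(\M_{2n+1})$ sends each $\xi(\Q)$ to $\gg$ and so provides no discrimination between stages. The resolution, as in \cite{SargPre}, is to pass instead through the larger direct limit $\M_\infty(\M)$: the external iteration maps $\pi_{\Q,\infty}$ produce strictly larger images of the successive $\xi(\Q)$'s whenever equality in the display above fails, and an a priori bound for these images is supplied by applying $\pi_{\P_0,\infty}$ to an ordinal computed from $\xi_0$ together with the initial locally $\gg$-stable data. Verifying this monotonicity and boundedness, using the knowledgability of $\M$ and $\M_{2n+1}$ and the stationarity of their iteration strategies inside the relevant premice, is the technical heart of the argument.
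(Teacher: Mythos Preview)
Your overall strategy---iterate to local $\gg$-stability, then stabilize via a well-foundedness argument---matches the paper's. But the displayed inequality is backwards, and with it your termination argument.

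The equality you claim,
\[
\xi(\Q) \;=\; \pi_{\pi_{\P_0,\Q}(\mH^{\P_0}),\,\mH^\Q}\bigl(\pi_{\P_0,\Q}(\xi_0)\bigr),
\]
would require that $\pi_{\P_0,\Q}\rest \mH^{\P_0}$ coincide with the iteration map $\mH^{\P_0}\to\pi_{\P_0,\Q}(\mH^{\P_0})$, so that the composite through $\mH^\Q$ to $\M_\infty(\M_{2n+1})$ sends $\pi_{\P_0,\Q}(\xi_0)$ to~$\gg$. It does not: the restriction of $\pi_{\P_0,\Q}$ is only an elementary embedding between these two iterates of $\M_{2n+1}$, and by Dodd--Jensen it pointwise \emph{dominates} the iteration map on ordinals. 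The failure of $\gg$-stability is exactly the case where these two maps disagree at~$\xi_0$. Running the Dodd--Jensen comparison the right way (this is the paper's ``key observation'') gives
\[
\pi_{\mH^\Q,\infty}\bigl(\pi_{\P_0,\Q}(\xi_0)\bigr)\;\ge\;\pi_{\mH^{\P_0},\infty}(\xi_0)=\gg,
\]
hence $\pi_{\P_0,\Q}(\xi_0)\ge\xi(\Q)$, with strict inequality precisely when $\Q$ witnesses that $\P_0$ is not $\gg$-stable.

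With the inequality oriented correctly, termination is immediate and needs no auxiliary upper bound: if no iterate is $\gg$-stable, one builds a sequence $(\P_i)$ with $\pi_{\P_i,\P_{i+1}}(\xi(\P_i))>\xi(\P_{i+1})$, so the direct limit of the $\P_i$ is ill-founded, contradicting iterability. By contrast, your proposed mechanism---a strictly increasing sequence of ordinals bounded above---does not terminate by well-foundedness; only a \emph{decreasing} sequence does. The detour through $\M_\infty(\M)$ in your final paragraph is unnecessary once the inequality is fixed, though it is harmless: the images $\pi_{\P_i,\infty}(\xi(\P_i))$ then form a strictly decreasing sequence there.
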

\begin{proof}
(Outline) Towards a contradiction assume not. The key observation is that whenever $\P$ is a complete iterate of $\M_{2n+1}(x)$ and $\Q$ is a complete iterate of $\P$, the Dodd-Jensen argument (see \cite[Chapter 4.2]{OIMT}) implies that for all $\xi\in \mH^\P$, $\pi_{\mH^\P, \infty}(\xi)\leq \pi_{\mH^\Q, \infty}(\pi_{\P, \Q}(\xi))$. 

Suppose now that $\R$ is a complete iterate of $\M_{2n+1}$ such that $\gg\in \rge(\pi_{\R, \infty})$. We  set $\gg_\R=\pi^{-1}_{\R, \infty}(\gg)$. 

Let now $\P$ be a compete iterate of $\M_{2n+1}(x)$ and $\Q$ be a complete iterate of $\P$. What we have observed implies that if $\gg_{\mH^\P}$ and $\gg_{\mH^\Q}$ are defined then $\pi_{\P, \Q}(\gg_{\mH^\P})\geq \gg_{\mH^\Q}$. Moreover, if $\Q$ witnesses that $\P$ is not $\gg$-stable then we in fact have that $\pi_{\P, \Q}(\gg_{\mH^\P})>\gg_{\mH^\Q}$.

 Observe further that given any $(\P, \Q)$  we can iterate $\Q$ to obtain a complete iterate $\R$ of $\Q$ such that $\gg_{\mH^\R}$ is defined. To do this, it is enough to fix some complete iterate $\S$ of $\M_{2n+1}$ for which $\gg_\S$ is defined and iterate $\Q$ to obtain a complete iterate $\R$ of $\Q$ such that $\S$ is generic for $\sf{Ea}^{\R}_{\d_0^\R}$.\footnote{According to our conventions, this means that the extender algebra over which $\S$ is generic has $\l$ many generators where $\l$ is the largest Woodin cardinal of $\S$.} It then follows that $\S$ is a point in the directed system of $\R[\S]$ that converges to $\mH^\R$, and so $\gg_{\mH^\R}$ is defined.
 
 We now get that our assumption that there is no $\gg$-stable complete iterate of $\M$ implies that there is a sequence $(\P_i: i<\omega)$ such that (a) for each $i$, $\P_{i+1}$ is a complete iterate of $\P_i$, (b) for each $i$, $\gg_{\mH^{\P_i}}$ is defined (c) $\P_0$ is a complete iterate of $\M$ and finally (d) for each $i$, 
\begin{center}
$\pi_{\P_i, \P_{i+1}}(\gg_{\mH^{\P_i}})>\gg_{\mH^{\P_{i+1}}}$. 
\end{center}
We thus have that the direct limit of $(\P_i: i<\omega)$ is ill-founded, which is clearly a contradiction.
\end{proof}
\item Suppose now that $\P$ is $\gg$-stable, $\nu<\d_0^\P$ and $p\in {\sf{Ea}}^\P_{\d_0^\P, \nu}$. Suppose $m\leq k$ are natural numbers (so $m\geq 1$). We say $p$ is $(\gg, k, m)$-good if $p$ forces that when the generic is decoded as a $k$-tuple $(u_1, ..., u_k)$ (via one of the standard ways of decoding a real into tuples) $u_m$ codes a complete iterate $\R$ of $\M_{2n+1}$ with the property that $\T_{\M_{2n+1}, \R}$ is below the least Woodin cardinal of $\M_{2n+1}$ and $\gg_{\mH^\P}\in \rge(\pi_{\R, \mH^\P})$. 
\item It is not immediately clear that the definition of $(\gg, k, m)$-good condition is first order over $\P$. This follows from the fact that the directed system can be internalized to $\P[g]$ where $g\subseteq Coll(\omega, \d_0^\P)$ is $\P$-generic. To do this, one uses the concept of $s_n$-iterability and the details of this were carried out in \cite{SargPre}. For example, see \cite[Definition 5.4]{SargPre} and \cite[Chapter 5.1]{SargPre}. The basic ideas that are used in internalizing directed systems are due to Woodin who carried out such internalization in $L[x]$ (see \cite{HODCoreModel}).
\item Because the statement ``$p$ is $(\gg, k,m)$-good" is first order over $\P$, there is a maximal antichain of $(\gg, k, m)$-good conditions. As ${\sf{Ea}}^\P_{\d_0^\P, \nu}$ has $\d_0^\P$-cc, we have a condition $p$ such that $p$ is the disjunction of the conditions of some maximal antichain consisting of $(\gg, k, m)$-good conditions, and as a consequence, $p$ is $(\gg, k, m)$-good and if $g\subseteq {\sf{Ea}}^\P_{\d_0^\P, \nu}$ is generic such that $g$ is a tuple $(g_1, g_2,..., g_k)$ and some $r\in g$ is  $(\gg, k, m)$-good then $p\in g$. We can then let $p_{\gg, \nu, k, m}^\P$ be the $\P$-least $(\gg, k, m)$-good condition with the property that any other $(\gg, k, m)$-good condition is compatible with it. We call $p_{\gg, \nu, k, m}^\P$ the $(\gg, \nu, k, m)$-master condition.
\end{enumerate}

\section{$\Pi^1_n$-iterability}\label{pi iterability} 

 We will use the concept of $\Pi^1_n$-iterability introduced in \cite[Definition 1.4]{PWIM}. Following \cite[Definition ]{PWIM}, we say that a premouse $\M$ is $n$-small if for every $\a\in \dom(\vec{E})$,\footnote{Recall that $\vec{E}$ is a partial function with domain some ordinal $\gg$. $\vec{E}(\a)$ is the extender indexed at $\a$.} $\M|\a\models ``$there does not exist $n$ Woodin cardinals". Thus, $\M_1^\#$ is 2-small and $\M_1$ is 1-small while $\M_2^\#$ is 3-small and $\M_2$ is 2-small. Notice however that all proper initial segments of $\M_1^\#$ are 1-small. We then say that $\M$ is properly $n$-small if $\M$ is $n+1$-small and all of its proper initial segments are $n$-small.
 
  We will use $\Pi^1_{2n+2}$-iterability in conjunction with properly $2n+1$-small premice as well as $2n+1$-small premice. There is a lot to review here, but the details are technical and we suggest that the reader consult \cite{PWIM}. The following is a list of facts that we will need.
 \begin{enumerate} 
 \item We say  $\M$ is $\M_n$-like premouse over $x$ if $\M$ is $n$-small premouse over $x$ with $n$ Woodin cardinals and such that ${\sf{Ord}}\subseteq \M$. Similarly, we say $\M$ is $\M_n^\#$-like premouse over $x$ (or just $x$-premouse) if $\M$ is active, sound, properly $n$-small premouse over $x$ with $n$ Woodin cardinals.  
 \item Given two premice $\M$ and $\M'$ we say $(\T, \T')$ is a coiteration of $(\M, \M')$ if the extenders of $\T$ and $\T'$ are chosen to be the least ones causing disagreement. More precisely, $\lh(\T)=\lh(\T')$ and for every $\a<\lh(\T)$, $\lh(E_\a^\T)$ is the least $\b$ such that $\M_\a^\T|\b=\M_\a^{\T'}|\b$ but $\M_\a^\T||\b\not=\M_\a^{\T'}||\b$, and similarly for $E_\a^{\T'}$. We then must have that $\lh(E_\a^\T)=\lh(E_\a^{\T'})$. It is possible that only one of $E_\a^\T$ and $E_\a^{\T'}$ is defined, in which case we allow padding\footnote{We didn't have to abuse our terminology this way if we didn't insist on having $\lh(\T)=\lh(\T')$, but this simplifies the notation.}. 
 \item We say that the coiteration $(\T, \T')$ of $(\M, \M')$ is successful if $\lh(\T)$ is a successor ordinal and if $\N$ is the last model of $\T$ and $\N'$ is the last model of $\T'$ then either $\N\insegeq \N'$ or $\N'\insegeq \N$. 
 \item Suppose $\M$ is a premouse, $\T$ is an iteration of $\M$ of limit length, $\a<\omega_1$ and $b$ is a maximal branch of $\T$. We say $\N$ is $(\a, b)$-relevant if either $\N=\M^\T_b$ or for some $\P\insegeq \M^\T_b$ and for some $E\in \vec{E}^\P$, $\N$ is the $\a$th linear iterate of $\P$ via $E$. We say that $b$ is $\a$-good if whenever $\N$ is $(\a, b)$-relevant, either $\N$ is well-founded or $\a$ is in the well-founded part of $\N$. We say $\M$ is $\Pi^1_2$-iterable if for every iteration $\T$ of $\M$ of countable limit length and for every ordinal $\a$ there is an $\a$-good maximal branch $b$ of $\T$. 
 \item Recall $\mathcal{G}(\M, 0, 2n+1)$, which is the iteration game introduced on page 83 of \cite{PWIM}. $\mathcal{G}(\M, 0, 1)$  defines $\Pi^1_2$-iterability. In $\mathcal{G}(\M, 0, 1)$, Player I plays a pair $(\T, \b)$ such that $\T$ is an iteration of $\M$ and $\b<\omega_1$. Player II must either accept $\T$ or play a $\b$-good branch of $\T$. If $\lh(\T)$ is a limit ordinal or $\T$ has an ill-founded last model then Player II is not allowed to accept $\T$. If Player II doesn't violate any of the rules of the game then Player II wins the game. We thus have that $\M$ is $\Pi^1_2$-iterable if and only if Player II has a winning strategy in $\mathcal{G}(\M, 0, 1)$.
 \item Suppose $\M$ is as above. We say that $\M$ is $\Pi^1_{2n+2}$-iterable if player $II$ has a winning strategy in $\mathcal{G}(\M, 0, 2n+1)$. The game has $2n+1$ many rounds. We describe the game for $n>0$. We modify the game very slightly and present it as a game that has only two rounds. We start by assuming that $\M$ is a $\d$-mouse in the sense of \cite[Definition 1.3]{PWIM}\footnote{$\d$-mouse is also sometimes called $\d$-sound.}. 
 \begin{enumerate}
 \item In the first round, Player I plays a pair $(\T_0, x_0)$ such that $\T_0$ is an iteration of $\M$ that is above $\d+1$\footnote{For each $\a<\lh(\T)$, $\cp(E_\a)>\d$.} and $x_0\in \bR$. Player II must either accept $\T_0$ or play a maximal well-founded branch $b_0$ of $\T_0$. If $\lh(\T_0)$ is a limit ordinal or $\T_0$ has an ill-founded last model then Player II is not allowed to accept $\T_0$. If Player II accepts $\T_0$ then we set $\d_1=\sup\{\nu(E_\a^{\T_0}): \a+1<\lh(\T_0)\}$ and $\N_1=\M_{\lh(\T_0)-1}^{\T_0}$. If Player II plays $b$ then we set $\d_1=\d(\T_0)$ and $\N_1=\M^{\T_0}_{b_0}$.
 \item The second round is played on $\N_1$, which is now a $\d_1$-mouse. In this round, Player I plays an  iteration $\T_1$ of $\N_1$ above $\d_1$ such that for some $\Pi^1_{2n}$-iterable $\M_{2n-1}^\#$-like $(\T_0, b_0, x_0)$-premouse $\M'$, $\T_1\in \M'|\omega_1^{\M'}$\footnote{The requirement that $\T_1\in \M'|\omega_1^{\M'}$ is similar to the  requirement that $\T_1\in \Delta^{{\sf{HC}}}_{2n}(\T_0, b_0, x_0)$ used in \cite{PWIM}. It all comes down to the fact that for $\vec{s}\in \bR^{<\omega}$, $\exists u \in \Q_{2n+1}(\vec{s})$ is $\Pi^1_{2n+1}(\vec{s})$ uniformly in $\vec{s}$. For this and similar results one can consult \cite{Qtheory} or \cite[Corollary 4.9]{PWIM}.}. Once again Player II must either accept $\T_1$ or play a maximal well-founded branch $b_1$ of $\T_1$. If $\lh(\T_1)$ is a limit ordinal or $\T_1$ has an ill-founded last model then Player II is not allowed to accept $\T_1$. If Player II accepts $\T_1$ then we set $\d_2=\sup\{\nu(E_\a^{\T_1}): \a+1<\lh(\T_1)\}$ and $\N_2=\M_{\lh(\T_1)-1}^{\T_1}$. If Player II plays $b_1$ then we set $\d_2=\d(\T_1)$ and $\N_2=\M^{\T_1}_{b_1}$.
 \end{enumerate}
 Now, Player II wins if Player II doesn't violate any of the rules of the game and $\N_2$ is $\Pi^1_{2n}$-iterable as a $\d_2$-mouse. 
 \item For $n>1$, the statement $``$the real $x$ codes a  $\Pi^1_n$-iterable premouse" is $\Pi^1_n$.
 \item Assuming that $\Pi^1_{2n}$-iterability is a $\Pi^1_{2n}$-condition, it is not hard to see that $\Pi^1_{2n+2}$-iterability is a $\Pi^1_{2n+2}$-condition. \footnote{The proof appears in \cite{PWIM}.}
  \item Suppose $x\in \bR$, $n\in \omega$ and $\M$ is $\M_{2n+1}^\#$-like $\Pi^1_{2n+2}$-iterable $x$-premouse.  Let $\d$ be the least Woodin cardinal of $\M$ and suppose $\T$ is an iteration of $\M$ that is below $\d$. We say $\T$ is correct if $\lh(\T)\leq \omega_1$ and for each limit $\a<\lh(\T)$ if $b_\a=[0, \a)_\T$ then $\Q(b_\a, \T\rest \a)$ exists and $\Q(b_\a, \T\rest \a)\insegeq \M_{2n}(\c(\T\rest \a))$. 
  \item \cite{PWIM} shows that for $x\in \bR$, $\M_n^\#(x)$ is the unique $\Pi^1_{n+2}$-iterable, active, properly $n$-small,  sound premouse over $x$. For example see clause 3 of \cite[Lemma 2.2]{PWIM}, and also the remark after \cite[Corollary 4.11]{PWIM}.
\item It follows from \cite{PWIM} that if $\M$ is $2n$-small, $\Pi^1_{2n+2}$-iterable, sound $\d$-mouse (in the sense of \cite[Definition 1.3]{PWIM}) over a real $x$ then $\M$ is $\omega_1$-iterable above $\d$. For example, see \cite[Lemma 3.3]{PWIM}.
\item \cite[Corollary 4.7]{PWIM} implies that for all $n\in \omega$ and $x\in \bR$, $\M_{2n}(x)$ is $\Sigma^1_{2n+2}$-correct. More precisely, if $\vec{a}\in \bR^{<\omega}\cap \M_{2n}(x)$ and $\phi$ is a $\Sigma^1_{2n+2}$ formula then $\M_{2n}(x)\models \phi[\vec{a}]$ if and only if $V\models \phi[\vec{a}]$. 
\item The following is a consequence of clause 12 above and the proof of \rlem{correctness}.
\begin{proposition}\label{correctness1} Suppose $x\in \bR$ and $\M$ is a complete iterate of $\M_{2n}(x)$ such that $\T_{\M_{2n}(x), \M}$ has a countable length. Let $\exists u \phi(\vec{v})$ be a $\Sigma^1_{2n+2}$ formula. Let $\d_0<\d_1<...<\d_{2n-1}$ be the Woodin cardinals of $\M$. Set $d=(\d_0, ..., \d_{2n-1})$. Suppose $\vec{a}\in \M\cap \bR^k$ is such that $V\models \exists u \phi[\vec{a}]$. There is then $b\in \bR\cap \M_{2n}(x)$ such that $\M\models \phi_{\M, d}[b, \vec{a}]$. 
\end{proposition}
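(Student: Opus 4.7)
The plan is to run the argument of \rprop{correctness} in reverse: use the $\Sigma^1_{2n+2}$-correctness of $\M_{2n}(x)$ to locate the witness $b$, and then use extender algebra to certify the forcing statement $\phi_{\M,d}[b,\vec{a}]$ inside $\M$.

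First, since every extender used along $\T_{\M_{2n}(x), \M}$ has critical point above $\omega_1^{\M_{2n}(x)}$, the iteration embedding fixes reals, so $\bR \cap \M = \bR \cap \M_{2n}(x)$, and in particular $\vec{a} \in \bR^k \cap \M_{2n}(x)$. By the $\Sigma^1_{2n+2}$-correctness of $\M_{2n}(x)$ noted in the preceding section, the hypothesis $V \models \exists u \phi[\vec{a}]$ yields $\M_{2n}(x) \models \exists u \phi[\vec{a}]$; pick any witness $b \in \bR \cap \M_{2n}(x)$. Applying correctness once more to the $\Pi^1_{2n+1}$ formula $\phi$ gives $V \models \phi[b,\vec{a}]$.

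It remains to show $\M \models \phi_{\M,d}[b,\vec{a}]$, and this I would prove by induction on the quantifier depth of $\phi$ using the extender algebra, mirroring the proof of \rprop{correctness}. Writing $\phi = \forall u_1 \psi$ with $\psi$ a $\Sigma^1_{2n}$ formula, the statement $\phi_{\M,d}[b,\vec{a}]$ unfolds as a nested forcing assertion over ${\sf{Ea}}^\M_{\d_0}$, then over an extender algebra at $\d_1$ performed over an $S$-construction, and so on through the Woodins $\d_0 < \cdots < \d_{2n-1}$. By the $\d_0$-c.c., verifying the outermost universal reduces to showing that for every complete iterate $\N$ of $\M$ above $\d_0$ and every $V$-real $y_1$ generic over $\N$ for ${\sf{Ea}}^\N_{\d_0^\N}$, the remaining $\Sigma^1_{2n}$-forcing statement holds of the $S$-construction ${\sf{StrLe}}(\N, y_1)$ inside $\N[y_1]$. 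Since $V \models \forall u_1 \psi[u_1, b, \vec{a}]$, we have $V \models \psi[y_1, b, \vec{a}]$, and by clause 7 of \rrev{s constructions} the $S$-construction produces a complete iterate of an $\M_{2n-1}$-like mouse over $(y_1, b, \vec{a})$ with the remaining Woodins $\d_1, \ldots, \d_{2n-1}$. The inductive hypothesis (in the $\Pi/\Sigma$-alternating form corresponding to $\psi$) then furnishes the required witness for the next existential, and the recursion continues until only the $\Sigma^1_1/\Pi^1_1$ matrix of $\phi$ remains, at which point absoluteness together with $V \models \phi[b,\vec{a}]$ closes the argument.

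The main obstacle is the bookkeeping of Woodin cardinals across the successive $S$-constructions and the need to keep the existential witnesses at each step inside the correct mouse: at each stage one must verify that the $S$-constructed model is a complete iterate of the expected lower $\M_{2k}$-like mouse over the accumulated real parameters, so that the inductive form of the correctness statement applies with matching indices. Once this is in place, the argument is a routine dual of the iteration-and-absorption step in \rprop{correctness}, run outward through the quantifier prefix of $\phi$.
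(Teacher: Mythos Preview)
Your first three steps are exactly what the paper intends: the reals of $\M$ and of $\M_{2n}(x)$ agree, so $\vec a\in\M_{2n}(x)$, and clause~12 (the $\Sigma^1_{2n+2}$-correctness of $\M_{2n}(x)$) produces $b\in\bR\cap\M_{2n}(x)$ with $V\models\phi[b,\vec a]$.

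The problem is your verification of $\M\models\phi_{\M,d}[b,\vec a]$. You pass to iterates $\N$, add a generic $y_1$, and then continue the argument inside ${\sf StrLe}(\N,y_1)$, invoking an inductive hypothesis about $\M_{2n-1}$-like mice. But look again at the definition of $\phi_{\M,d}$ in clause~3 of \rrev{ea}: it is a nested forcing statement entirely inside $\M$ and its generic extensions $\M[g_0],\M[g_0][g_1],\ldots$, using the successive extender algebras ${\sf Ea}_{\d_i,\d_{i-1}}$ of those extensions. No $S$-construction appears anywhere in that recursion, and the inner clause $\psi_{\M[g],d'}$ is evaluated in $\M[g]$, not in ${\sf StrLe}(\N,y_1)$. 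So the ``inductive hypothesis'' you want to apply is about the wrong object; what you would actually need is an inductive statement about $\M[g_0]\cdots[g_{i-1}]$ with its remaining Woodins, not about an $\M_{2k}$-like mouse produced by an $S$-construction. (A minor additional slip: to make a $V$-real generic for ${\sf Ea}^\N_{\d_0^\N}$ one iterates \emph{below} $\d_0$, not above it.)

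The argument the paper has in mind is much shorter and avoids $S$-constructions entirely. By elementarity of $\pi_{\M_{2n}(x),\M}$ and the fact that $\pi$ fixes $b,\vec a$, it suffices to show $\M_{2n}(x)\models\phi_{\M_{2n}(x),d'}[b,\vec a]$, where $d'$ lists the Woodins of $\M_{2n}(x)$. If this failed, then some condition would force the dual statement, and running the proof of \rprop{correctness} verbatim (iterating $\M_{2n}(x)$ to make the relevant $V$-reals generic for the successive extender algebras, exactly as in that proof) would yield $V\models\neg\phi[b,\vec a]$, contradicting the choice of $b$. That is all ``the proof of \rprop{correctness}'' is doing here: it supplies the contrapositive direction, with no need to leave (iterates and generic extensions of) $\M_{2n}(x)$.
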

\end{enumerate}
  
      The following are consequences of \cite{PWIM} that we will need. The proofs of these facts are implicitly present in \cite{PWIM}, but \cite{PWIM} doesn't isolate these facts.
  
    \begin{lemma}\label{cofinal branch}
  Suppose $\M$ is $\Pi^1_2$-iterable and $\M_1^\#$-like, and suppose $\T$ is a correct iteration of $\M$ of countable limit length. Let $\b$ be such that for each limit $\a<\lh(\T)$, if $b_\a=[0, \a)_\T$ then $\Q(b_\a, \T\rest \a)$ exists and $\Q(b_\a, \T\rest \a)\inseg \mathcal{J}_\b(\c(\T\rest \a))$. Suppose $I$ plays $(\T, \b)$ in $\mathcal{G}(\M, 0, 1)$ and $II$ responds, using her winning strategy in $\mathcal{G}(\M, 0, 1)$, with a branch $b$. Then $b$ is a cofinal branch of $\T$. 
  \end{lemma}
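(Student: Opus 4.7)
I would argue by contradiction. Suppose the branch $b$ produced by Player II's winning strategy is not cofinal in $\T$, and set $\a_0 = \sup(b)$, so $\a_0 < \lh(\T)$. Since $b$ is a maximal branch, $\a_0$ must be a limit ordinal in the tree order $T$, and $b$ is then a cofinal branch of $\T\rest\a_0$. Let $c = [0, \a_0)_\T$ be the branch through $\T\rest\a_0$ that $\T$ itself uses to form $\M^\T_{\a_0}$. Because $c$ is extendible in $\T$ (via $\a_0$ and subsequent models) while $b$ is maximal, $b \neq c$.

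By the correctness hypothesis applied at stage $\a_0$, $\Q(c, \T\rest\a_0)$ exists and is an initial segment of $\mathcal{J}_\b(\c(\T\rest\a_0))$; in particular its ordinal height is some $\xi \leq \b$. Since $\M$ is $\M_1^\#$-like and $\T$ is below $\M$'s unique Woodin cardinal, $\d(\T\rest\a_0)$ lies strictly below the image of this Woodin under the iteration along $b$, hence is not Woodin in $\M^\T_b$; so $\Q(b, \T\rest\a_0)$ exists as an initial segment of $\M^\T_b$. The $\b$-goodness of $b$ then puts $\b$ in the well-founded part of $\M^\T_b$, so $\Q(b, \T\rest\a_0)$ is a bona fide premouse, not an ill-founded fragment.

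In the $\M_1^\#$-like correct setting, both Q-structures are $0$-small $\c(\T\rest\a_0)$-premice with no extenders above $\c(\T\rest\a_0)$, i.e., they are initial segments of $L[\c(\T\rest\a_0)]$. Both are uniquely determined as the least level of $L[\c(\T\rest\a_0)]$ witnessing the failure of Woodinness of $\d(\T\rest\a_0)$, so $\Q(b, \T\rest\a_0) = \Q(c, \T\rest\a_0)$. Invoking now the standard branch-identification property of Q-structures (distinct cofinal branches of a single iteration tree cannot share a Q-structure, by a coherence argument on the generators used in $\T\rest\a_0$), we conclude $b = c$, contradicting $b \neq c$.

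The main technical obstacle is precisely this Q-structure uniqueness: ensuring $\Q(b, \T\rest\a_0)$ is $L$-like over $\c(\T\rest\a_0)$ and has ordinal height at most $\b$, so that it can be compared with $\Q(c, \T\rest\a_0)$ inside $L[\c(\T\rest\a_0)]$. This is where $\M_1^\#$-likeness of $\M$ (ruling out extenders above $\c(\T\rest\a_0)$ in any 0-small Q-structure of an iterate) and $\b$-goodness of $b$ (placing the Q-structure in the well-founded portion of $\M^\T_b$) combine to force the two Q-structures to coincide as the unique minimal $L$-level refuting Woodinness of $\d(\T\rest\a_0)$.
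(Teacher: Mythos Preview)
Your argument has a genuine gap at the step where you assert that $\Q(b,\T\rest\a_0)$ has no extenders indexed above $\d(\T\rest\a_0)$ and is therefore an initial segment of $L[\c(\T\rest\a_0)]$. That is true of $\Q(c,\T\rest\a_0)$ by the correctness hypothesis, but for the branch $b$ produced by Player~II you only know $\Q(b,\T\rest\a_0)\insegeq \M^{\T\rest\a_0}_b$, and $\M^{\T\rest\a_0}_b$ can have extenders on its sequence between $\d(\T\rest\a_0)$ and the image of $\M$'s Woodin. Nothing you have said rules this out, and once $\Q(b,\T\rest\a_0)$ is not $L$-like over $\c(\T\rest\a_0)$ your comparison of the two $Q$-structures inside $L[\c(\T\rest\a_0)]$ collapses. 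The paper's proof confronts exactly this possibility: assuming $\Q(b,\T\rest\a_0)\not\insegeq \mathcal{J}_\b(\c(\T\rest\a_0))$, one picks an extender $E\in\vec{E}^{\Q(b,\T\rest\a_0)}$ with $\cp(E)>\d(\T\rest\a_0)$, linearly iterates $\Q(b,\T\rest\a_0)$ by $E$ for $\b$ steps, and invokes the \emph{linear iterate} clause in the definition of $\b$-goodness (the $(\b,b)$-relevant models) to place $\b$ in the well-founded part of the iterate and derive a contradiction. You never use that clause; your appeal to $\b$-goodness only touches $\M^{\T\rest\a_0}_b$ itself.

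Relatedly, you misread $\b$-goodness: it says each $(\b,b)$-relevant model is well-founded \emph{or} has $\b$ in its well-founded part. The first disjunct allows $\M^{\T\rest\a_0}_b$ to be well-founded of height below $\b$, in which case $\b$ is not ``in'' the model at all. You also do not handle the case where $\Q(b,\T\rest\a_0)$ fails to exist (your claim that $\d(\T\rest\a_0)$ lies strictly below $\pi_b(\d)$ is unjustified, and $\pi_b$ need not even be defined if $b$ drops). The paper treats this case separately, iterating $\M^{\T\rest\a_0}_b$ by its top extender $\b$ times and again using the linear-iterate clause of $\b$-goodness. In short, the definition of $\Pi^1_2$-iterability builds in information about linear iterates precisely to make this lemma go through, and your argument ignores that part of the definition.
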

  \begin{proof} Towards a contradiction suppose that $b$ is not a cofinal branch, and set $\a=\sup(b)$. 
  
  Suppose for a moment that $\Q(b, \T\rest \a)$ exists. We claim that then $\Q(b, \T\rest \a)\insegeq \mathcal{J}(\c(\T\rest \a))$. To see this, suppose not. Because $\mathcal{J}_\b(\c(\T\rest \a))\models ``\d(\T\rest \a)$ is not a Woodin cardinal", we have that $\b$ is not in the well-founded part of $\M^{\T\rest \a}_b$. This implies that $\M^{\T\rest \a}_b$ is well-founded and has ordinal height less than $\b$. But since $\Q(b, \T\rest \a)\not \insegeq \mathcal{J}(\c(\T\rest \a))$, $\Q(b, \T\rest \a)$ must have an active level above $\d(\T \rest \a)$. Let $E\in \vec{E}^{\Q(b, \T\rest \a)}$ be such that $\cp(E)>\d(\T)$. Let $\N$ be the $\b$th iterate of $\Q(b, \T\rest \a)$ via $E$. It follows that $\b$ is in the well-founded part of $\N$, and so $\N|\b\models ``\d(\T\rest \a)$ is not a Woodin cardinal". Hence, $\Q(b, \T\rest \a)\insegeq \N|\b$, which is a contradiction as no mouse is an initial segment of its own iterate.  
  
Now, because the existence of $\Q(b, \T\rest \a)$ implies that  $\Q(b, \T\rest \a)\insegeq \mathcal{J}_\b(\c(\T\rest \a))$, we must have that if $\Q(b, \T\rest \a)$ exists then $\Q(b, \T\rest \a)=\Q(b_\a, \T\rest \a)$ which then implies that $b=b_\a$. Thus, because $b\not =b_\a$, $\Q(b, \T\rest \a)$ does not exist. 
  
It now immediately follows that $\b$ cannot be in the well-founded part of $\M^{\T\rest \a}_b$. Thus, $\M^{\T\rest \a}_b$ must be well-founded and $\sup({\sf{Ord}}\cap \M^{\T\rest \a}_b)< \b$. Because $\Q(b, \T\rest \a)$ does not exist, $\pi^{\T\rest \a}_b$ is defined. If now $\N$ is the $\b$th iterate of $\M^{\T\rest\a}_b$ via the last extender of $\M^{\T\rest \a}_b$, $\N\models ``\d(\T\rest \a)$ is not a Woodin cardinal". This contradicts the fact that $\Q(b, \T\rest \a)$ does not exist.
  \end{proof}
  
  \begin{proposition}\label{simple iterability}
  Assume $V$ is closed under the sharp function $x\mapsto x^\#$. Suppose $\M$ is $\Pi^1_2$-iterable and $\M_1^\#$-like, and suppose $\T$ is a correct iteration of $\M$ of countable limit length. Let $\b$ be such that for each $\a<\lh(\T)$, if $b_\a=[0, \a)_\T$ then $\Q(b_\a, \T\rest \a)$ exists and $\Q(b_\a, \T\rest \a)\inseg \mathcal{J}_\b(\c(\T))$. Suppose that there is an $\a$ such that $\mathcal{J}_\a(\c(\T))\models ``\d(\T)$ is a Woodin cardinal" and $\mathcal{J}_{\a+1}(\c(\T))\models ``\d(\T)$ is not a Woodin cardinal", and finally, suppose $I$ plays $(\T, \max(\a+1, \b))$ in $\mathcal{G}(\M, 0, 1)$ and $II$ responds with a branch $b$ (according to her winning strategy). Then $b$ is the unique cofinal well-founded branch of $\T$ such that $\mathcal{J}_{\a}(\c(\T))\insegeq \M^\T_b$.
  \end{proposition}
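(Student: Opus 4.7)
The plan is to extend the argument of \rlem{cofinal branch} using the stronger parameter $\max(\a+1,\b)$ chosen by Player I. First, since $\max(\a+1,\b)\geq\b$, the hypothesis of \rlem{cofinal branch} continues to hold, so II's winning response $b$ must be a cofinal branch of $\T$. Second, because II is playing a winning strategy in $\mathcal{G}(\M,0,1)$, $b$ is in fact $(\max(\a+1,\b))$-good: every $(\max(\a+1,\b),b)$-relevant premouse---namely $\M^\T_b$ and every linear iterate of any initial segment of $\M^\T_b$ by an extender on its sequence---has $\a+1$ in its well-founded part. This ensures that $\M^\T_b | (\a+1)$ is a genuine premouse structure and grants the iterability needed to compare it against the $\mathcal{J}$-hierarchy over $\c(\T)$.

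The heart of the proof is to show $\mathcal{J}_\a(\c(\T))\insegeq \M^\T_b$, i.e., that $\M^\T_b$ and the $\mathcal{J}$-hierarchy over $\c(\T)$ agree through level $\a$. Since $b$ is cofinal, they already agree on $\c(\T)=\M^\T_b| \d(\T)$. Because $\M$ is $\M_1^\#$-like it is properly $1$-small, so no proper initial segment of $\M^\T_b$ has a Woodin cardinal; in particular, if $\M^\T_b$ indexed an extender $E$ at some $\xi\in(\d(\T),\a]$, then a $\Q$-structure style comparison---mimicking the linear-iteration argument used inside the proof of \rlem{cofinal branch}, together with closure of $V$ under sharps---would produce a well-founded structure in which $\d(\T)$ fails to be Woodin strictly below $\a$, contradicting the hypothesis $\mathcal{J}_\a(\c(\T))\models ``\d(\T)$ is Woodin$"$. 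Hence no such extender exists, so $\M^\T_b | \a=\mathcal{J}_\a(\c(\T))$, yielding the desired inclusion; moreover, the definable witness to failure of Woodinness appearing at level $\a+1$ of the $\c(\T)$-hierarchy then forces $\Q(b,\T)=\mathcal{J}_\a(\c(\T))$.

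Uniqueness follows from clause 14 of \rrev{basic imt}: if $c$ were another cofinal well-founded branch with $\mathcal{J}_\a(\c(\T))\insegeq \M^\T_c$, the same analysis would give $\Q(c,\T)=\mathcal{J}_\a(\c(\T))=\Q(b,\T)$, contradicting the fact that distinct cofinal branches with existing $\Q$-structures produce distinct $\Q$-structures. The main obstacle is the middle step: translating the external hypothesis on $\mathcal{J}_{\a+1}(\c(\T))$ into an internal statement ruling out offending extenders of $\M^\T_b$ indexed in $(\d(\T),\a]$. This hinges on the properly $1$-small structure of $\M$, the $(\a+1)$-good well-foundedness of the relevant iterates, and the closure of $V$ under sharps to secure the auxiliary iterability required by the $\Q$-structure comparison.
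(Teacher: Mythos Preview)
Your argument has a genuine gap: you never show that $\M^\T_b$ is well-founded. Recall that a branch being $\max(\a+1,\b)$-good only guarantees that every relevant structure is either well-founded \emph{or} has $\max(\a+1,\b)$ in its well-founded part. So after your second paragraph you have at best that $\mathcal{J}_\a(\c(\T))\insegeq\M^\T_b$ and $\Q(b,\T)=\mathcal{J}_\a(\c(\T))$, while $\M^\T_b$ may still be ill-founded above $\a$. This is precisely the configuration the paper works hardest to rule out: one reduces to the $\Sigma^1_2$ statement ``there is an iteration $\U$ of (a long linear iterate of) $\M$ with a cofinal ill-founded branch $c$ such that $\Q(c,\U)=\mathcal{J}_\a(\c(\U))$'', pulls it inside a generic extension of $\M$ by $\Sigma^1_2$-absoluteness, and then derives a contradiction via the Martin--Steel realizability theorem. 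The hypothesis that $V$ is closed under sharps is used exactly here, not for any ``$\Q$-structure comparison'' of the sort you describe.

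Your handling of the well-founded case is also off. You claim that an extender $E$ indexed at some $\xi\in(\d(\T),\a]$ would, after linear iteration, yield a structure in which $\d(\T)$ fails to be Woodin strictly below $\a$; but iterating such an $E$ only produces taller models that still think $\d(\T)$ is Woodin. The paper's argument runs in the opposite direction: from $\mathcal{J}_\a(\c(\T))\not\insegeq\M^\T_b$ one first deduces that $\pi^\T_b$ is defined (so $\M^\T_b$ is $\M_1^\#$-like with a top extender), then iterates $\M^\T_b$ by its \emph{top} extender $\max(\a+1,\b)$ times to obtain a well-founded $\N$ of height $>\a$ with $\mathcal{J}_{\a+1}(\c(\T))\insegeq\N$; since $\N$ therefore sees that $\d(\T)$ is not Woodin, so already does $\M^\T_b$, contradicting the assumption.
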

  \begin{proof}
We have that \rlem{cofinal branch} implies that $b$ is cofinal. Also, there can be at most one such well-founded cofinal branch. Towards a contradiction assume that either $b$ is ill-founded or $\mathcal{J}_{\a}(\c(\T))\not \insegeq \M^\T_b$.  Suppose for a moment that $b$ is well-founded. Then we must have that $\mathcal{J}_{\a}(\c(\T))\not \insegeq \M^\T_b$, which implies that $\pi^\T_b$ is defined. But now if $\N$ is the $\max(\a+1, \b)$th iterate of $\M^\T_b$ via its last extender then $\mathcal{J}_{\a+1}(\c(\T))\insegeq\N$. Hence, $\N\models ``\d(\T)$ is not a Woodin cardinal" implying that $\mathcal{J}_{\a+1}(\c(\T)) \insegeq \M^\T_b$. 

We thus have that $b$ is an ill-founded branch, and this implies that $\max(\a+1, \b)$ is in the well-founded part of $\M^\T_b$, which then implies that $\Q(b, \T)$ exists and $\Q(b, \T)=\mathcal{J}_{\a}(\c(\T))$. Thus, we have that\\\\
(*) there is a cofinal branch $b$ of $\T$ such that $\Q(b, \T)$ exists, $\Q(b, \T)=\mathcal{J}_{\a}(\c(\T))$ and this branch is ill-founded. \\\\

Let $\M'$ be the result of iterating the last extender of $\M$ out of the universe and let $\gg>\max(\a, \b)$ be such that the ill-foundedness of $b$ can be witnessed by functions in $\M'|\gg$\footnote{Notice that after straighforward re-organization, $\T$ can be viewed as an iteration $\T^*$ of $\M'$. In the presence of sharps, the illfoundedness of $b$ as a branch of $\T$ is equivalent to the illfoundedness of $b$ as a branch of $\T^*$.}. We can now express (*)  by the following formula:\\\\ 
  (**)  there are reals $x, y$ and $z$ such that
  \begin{enumerate}
  \item $x$ codes an iteration $\U$ of $\M'|\gg$
  \item $y$ codes a well-founded model $\N=\mathcal{J}_{\a}(\c(\U))$ such that $J_{1}[\N]\models ``\d(\U)$ is not a Woodin cardinal",
  \item  $z$ codes a cofinal ill-founded branch $c$ of $\U$ such that $\N\insegeq \M^\U_c$.
  \end{enumerate}
  (**) is a $\Sigma^1_2$ statement in any code of $(\M'|\gg, \a)$, and so we get such a $\U$ in $\M[g]$ where $g\subseteq Coll(\omega, \b)$ is generic. But then we can use the proof of \cite[Corollary 4.17]{HODCoreModel} to get a contradiction\footnote{Here is a little bit more for those who are familiar with such proofs. We fix $\theta$ larger than $\gg$ and let $\pi: \P\rightarrow \M'|\theta$ be a countable elementary hull of $\M'|\theta$ inside $\M'$ such that $(\gg, \a)\in \rge(\pi)$. Let then $g\in \M'$ be $\P$-generic for $Coll(\omega, \pi^{-1}(\gg))$. Let $(\T', \N')\in \P[g]$ be a pair satisfying (**). Using Martin-Steel realizability theorem (see \cite[Theorem 4.3]{IT}) we can show that the branch $c'$ of clause 3 of (**) is the $\pi$-realizable branch of $\T'$ and hence, it is well-founded. The fact that $c'\in \P[g]$ follows from absoluteness and the fact that $\N'$ uniquely identifies $c'$.}.
    \end{proof}

\begin{proposition}\label{correct branch} Suppose $n\in \omega$ and $\M$ is $\M^\#_{2n+1}$-like $\Pi^1_{2n+2}$-iterable $x$-premouse.  Let $\Sigma$ be a strategy for player $II$ in $\mathcal{G}(\M, 0, 2n+1)$. Let $\d$ be the least Woodin cardinal of $\M$ and suppose $\T$ is a correct iteration of $\M$ below $\d$ such that $\lh(\T)$ is a limit ordinal. For a limit $\a<\lh(\T)$ let $b_\a=[0, \a)_\T$.  Let $y_0$ be a real that codes the sequence $(\Q(b_\a, \T\rest \a): \a<\lh(\T)\wedge \a\in Lim)$ and $\M_{2n}^\#(\c(\T))$.  Finally set $b=\Sigma(\T, y)$ where $y$ is any real that is Turing above $y_0$ and also set $\N=\M_{2n}(\c(\T))$. Then the following holds.
\begin{enumerate}
\item $b$ is a cofinal branch. 
\item Suppose further that $\N\models ``\d(\T)$ is not a Woodin cardinal". Then $\Q(b, \T)$ exists and $\Q(b, \T)\insegeq \N$.
\item Suppose that $\N\models ``\d(\T)$ is a Woodin cardinal". Then $\N|(\d(\T)^+)^{\N}\insegeq \M^\T_b$.
\end{enumerate}
\end{proposition}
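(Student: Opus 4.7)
The plan is to extend the arguments of \rlem{cofinal branch} and \rprop{simple iterability} to higher levels, leveraging the multi-round structure of $\mathcal{G}(\M, 0, 2n+1)$ and the uniqueness of $\Q$-structures provided by $\N := \M_{2n}(\c(\T))$. The key point is that Player I is allowed to feed in the real $y$ which codes both the sequence $(\Q(b_\a, \T\rest\a) : \a < \lh(\T) \wedge \a \in Lim)$ and $\M_{2n}^\#(\c(\T))$; consequently any response $b$ produced by the winning strategy $\Sigma$ must be consistent with this data, both in terms of well-foundedness up to ordinals coded in $y$ and in terms of the $\Q$-structure $b$ realizes on $\T$.

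For clause (1), I argue by contradiction. Assume $\a := \sup(b) < \lh(\T)$. Since $\T$ is correct, $\Q(b_\a, \T\rest \a)$ exists and is an initial segment of $\M_{2n}(\c(\T\rest\a))$. Viewing $b$ as a maximal branch of $\T\rest \a$ distinct from $b_\a$, I follow the dichotomy of \rlem{cofinal branch}. Either $\Q(b, \T\rest \a)$ exists, in which case a linear iteration of its last extender (if any) forces it to be an initial segment of $\M_{2n}(\c(\T\rest\a))$ as well, so $\Q$-structure uniqueness yields $b = b_\a$, a contradiction. Or $\Q(b, \T\rest \a)$ fails to exist, in which case $\pi_b^{\T\rest\a}$ is defined and iterating the last extender of $\M^{\T\rest\a}_b$ produces an ill-founded tower that violates the $y$-goodness of $b$, since $y$ codes enough to witness a sufficiently large ordinal.

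For clauses (2) and (3), $b$ is now cofinal and I analyze how $\M^\T_b$ interacts with $\N$ using the second round of $\mathcal{G}(\M, 0, 2n+1)$. Under (2), there is a unique $\Q^* \insegeq \N$ with $\Q^* \models ``\d(\T)$ is Woodin" and $\mathcal{J}_1[\Q^*] \models ``\d(\T)$ is not Woodin"; I aim to show $\Q(b,\T)$ exists and equals $\Q^*$. The round-2 challenge is a comparison tree $\T_1$ above $\d(\T)$ on $\M^\T_b$ with $\Q^*$, where the $\M_{2n-1}^\#$-like $\Pi^1_{2n}$-iterable auxiliary certifying admissibility of $\T_1$ is obtained by an $S$-construction over $\M_{2n}^\#(\c(\T))$ (coded in $y$). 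Uniqueness of $\Q$-structures applied to the round-2 winning response then forces $\Q(b,\T) = \Q^*$. Under (3), $\N \models ``\d(\T)$ is Woodin", so dually I run a round-2 comparison of $\M^\T_b$ with $\N|(\d(\T)^+)^\N$ above $\d(\T)$; the universality of the fully backgrounded construction inside $\N$ (clause 11 of \rrev{s constructions}) combined with the winning response of $\Sigma$ in round 2 forces $\N|(\d(\T)^+)^\N \insegeq \M^\T_b$.

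The main obstacle, as I see it, will be clause (3), and more generally verifying the admissibility of the round-2 challenges under the rules of $\mathcal{G}(\M, 0, 2n+1)$: from $(\T, b, y)$ alone one must produce the requisite $\M_{2n-1}^\#$-like $\Pi^1_{2n}$-iterable auxiliary whose $\omega_1$ bounds the round-2 iteration tree. This is precisely where $\M_{2n}^\#(\c(\T))$ (coded in $y$) is essential: the auxiliary is extracted from it via an $S$-construction, and its $\Pi^1_{2n}$-iterability is supplied by clause 11 of the $\Pi^1_n$-iterability list above. Once admissibility is granted, standard $\Q$-structure comparison should finish both (2) and (3).
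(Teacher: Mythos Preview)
Your approach to clauses (2) and (3) via round-2 comparison is broadly correct in spirit, but your argument for clause (1) has a genuine gap, and this is the heart of the proposition.

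For $n\geq 1$ the game $\mathcal{G}(\M,0,2n+1)$ carries no ordinal-goodness requirement in round 1: Player II must merely produce a maximal well-founded branch, not a $\beta$-good one. So your Case-2 sentence ``iterating the last extender of $\M^{\T\rest\a}_b$ produces an ill-founded tower that violates the $y$-goodness of $b$'' has nothing to violate. Likewise your Case-1 linear-iteration argument only works in the $n=0$ setting of \rlem{cofinal branch}, where the candidate $\Q$-structures live inside some $\mathcal{J}_\b(\c(\T\rest\a))$. For $n\geq 1$ both $\Q(b,\T\rest\a)$ and $\Q(b_\a,\T\rest\a)$ can be genuine $2n$-small $\d(\T\rest\a)$-mice, and showing they agree requires a full coiteration, not a linear iterate of a single extender.

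The paper resolves clause (1) by using round 2 already here. With $\R=\Q(b,\T\rest\a)$ (or $\M^{\T\rest\a}_b$ if the $\Q$-structure fails to exist) and $\S=\Q(b_\a,\T\rest\a)$, one coiterates $(\R,\S)$ inside $\M_{2n-1}(\R,\S)$, choosing branches on the $\R$-side by matching $\Q$-structures with those on the $\S$-side (which is genuinely iterable). If the coiteration fails, the $\R$-side tree $\U$ is a legal round-2 challenge for $I$, since $y$ encodes $\S$ and hence $\U\in\M_{2n-1}(y,\T,b)|\omega_1^{\M_{2n-1}(y,\T,b)}$. Player II's round-2 response, together with the winning condition that the resulting model be $\Pi^1_{2n}$-iterable, is then confronted with \cite[Lemmas 2.2(3) and 3.1]{PWIM} to produce a contradiction (either two Woodin cardinals in something that must be $1$-small, or $\S\insegeq\R$ forcing the non-Woodinness of $\d(\T\rest\a)$). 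Clauses (2) and (3) are then handled by the same comparison, now with $\S\insegeq\N$.

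A smaller point: your references to $S$-constructions and universality (clause 11 of \rrev{s constructions}) for manufacturing the round-2 auxiliary and finishing clause (3) are misplaced. The auxiliary is simply $\M_{2n-1}^\#(y,\T,b)$, and clause (3) is concluded not by universality but by the same comparison-plus-\cite[Lemma 3.1]{PWIM} mechanism used in the cofinality argument.
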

\begin{proof} 
Set $\a=\sup(b)$. We do the proof for the prototypical case $n=1$. 
 \begin{lemma} $b$ is cofinal.
 \end{lemma}
 \begin{proof} Towards a contradiction suppose that $\a<\lh(\T)$. \\\\
 \textbf{Case 1.}  $\Q(b, \T\rest \a)$ exists. \\\\
 Set $\R=\Q(b, \T\rest \a)$ and $\S=\Q(b_\a, \T\rest \a)$. Note that $\d(\T\rest \a)$ is a Woodin cardinal in both $\R$ and $\S$. We claim that $\R=\S$. To see this, we compare them inside $\M_1(\R, \S)$. If there is a successful coiteration of $\R$ and $\S$ then we indeed  have that $\R=\S$ (e.g. see \cite[Lemma 1.11]{PWIM}), and so any attempt to coiterate them is doomed to failure. 
 
 We attempt to coiterate $(\R, \S)$ in $\M_1(\R, \S)$ by building a coiteration $(\U, \W)$ in which the branches are picked as follows. Suppose $\l$ is a limit ordinal and we have defined $\U\rest \l$ and $\W\rest \l$. We want to describe our procedure for picking a branch of $\U\rest \l$ and $\W\rest \l$. As $\S$ is $\omega_1+1$-iterable inside $\M_1(\R, \S)$, we have a branch $d$ of $\W\rest \l$ that is according to the unique strategy of $\S$. Also, we must have that $\Q(d, \W\rest \l)$ exists as $\S$ is a $\d(\T\rest \a)$-mouse (in the sense of \cite[Definition 1.3]{PWIM})\footnote{Or just $\d(\T\rest \a)$-sound.}. We now seek a branch $c$ of $\U$ such that $\Q(c, \U\rest \l)$ exists and $\Q(c, \U\rest \l)=\Q(d, \W\rest \l)$. If there is such a branch then we choose it and continue the coiteration. Otherwise we stop the coiteration. As we cannot successfully coiterate $\R$ and $\S$, we must end up with a coiteration $(\U, \W)$ such that $\lh(\U)=\lh(\W)$ is a limit ordinal $< \omega_1^{\M_1(\R, \S)}$\footnote{One wrinkle here is to show that $\lh(\U)<\omega_1^{\M_1(\R, \S)}$. This follows from the fact that we always have a branch on the $\S$-side and so if the coiteration lasts $\omega_1^{\M_1(\R, \S)}$ steps then we could prove that $\U$ has a branch by a standard reflection argument.} and if $d$ is the branch of $\W$ according to $\S$'s unique strategy then there is no branch $c$ of $\U$  such that $\Q(c, \U)$ exists and $\Q(c, \U)=\Q(d, \W)$.
 
 We are now in the lucky situation that $\U$ is an allowed move for $I$ in the second round of $\mathcal{G}(\M, 0, 3)$. Indeed, some real recursive in $y$ codes $\S$, and therefore, $\U\in \M_1(y, \T, b)|\omega_1^{\M_1(y, \T, b)}$\footnote{Recall that for any $u\in \bR$ and any $\Pi^1_2$-iterable $\M_1$-like $\M$, $\M_1(u)|\omega_1^{\M(u)}\insegeq \M$. See \cite[Lemma 2.2]{PWIM}.}. Let then $I$ play $\U$ and $II$ play a branch $c$ such that $\M^\U_c$ is well-founded. Once again, we only know that $c$ is maximal. But now $\M^\U_c$ is $\Pi^1_2$-iterable above $\d(\U)$. Let $\gg=\sup(c)$, $\nu=\d(\U\rest \gg)$ and set $\R'=\Q(c, \U\rest \gg)$ if it exists and otherwise set $\R'=\M^{\U\rest \gg}_c$. If $\gg<\lh(\U)$ we set $\S'=\Q(c', \U\rest \gg)$ where $c'=[0, \gg)_{\U}$. Otherwise let $\S'=\Q(d, \W)$. In either of the cases, we have that $\S'$ is $\omega_1$-iterable, 1-small, $\d(\U)$-mouse and neither $\R'\not \insegeq \S'$ nor $\S'\not \insegeq \R'$. Moreover, $\nu$ is a Woodin cardinal in both $\R'$ and $\S'$. Thus, $\R'$ and $\S'$ have at least 2 Woodin cardinals, $\d(\T\rest \a)$ and $\nu$. 
 
 Because $\R'$ is $\Pi^1_2$-iterable above $\nu$ and $\S'$ is iterable, if $\R'=\Q(c, \U\rest \gg)$ then clause 3 of \cite[Lemma 2.2]{PWIM} implies that $\R'=\S'$,  which is a contradiction. Thus, $\R'=\M^{\U\rest \gg}_c$ and $\Q(c, \U\rest \gg)$ doesn't exist. It follows that $\pi^{\T^\frown \U}$ is defined, and hence, $\R'$ is not 1-small. But now \cite[Lemma 3.1]{PWIM} implies that $\S'\insegeq \R'$. Hence, after all, $\d(\U)$ is not a Woodin cardinal in $\R'$, contradiction. \\\\
 \textbf{Case 2.} $\Q(b, \T\rest \a)$ doesn't exist.\\\\
  Set now $\R=\M^{\T\rest \a}_b$ and $\S=\Q(b_\a, \T\rest \a)$. Let $\nu$ be the second least Woodin cardinal of $\R$. We now coiterate $\R_0=\R|(\nu^+)^\R$ and $\S$ inside $\M_1(\R_0, \S)$ using the same procedure as before, i.e., if $(\U_0, \W)$  is the coiteration that we build then at each limit stage $\l<\lh(\U_0)=\lh(\W)$, $d=[0, \l)_\W$ is the branch of $\W\rest \l$ according to the unique strategy of $\S$ as a $\d(\T\rest \a)$-mouse, and $c=[0,\l)_{\U_0}$ is the unique well-founded cofinal branch of $\U\rest \l$ such that $\Q(c, \U_0\rest \l)$ exists and $\Q(c, \U_0\rest \l)=\Q(d, \W\rest \l)$. 
 
Assume for a moment that the coiteration is successful. Thus $\U_0$ and $\W$ have last models, say $\R'_0$ and $\S'$. If $\S'\insegeq \R'_0$ then we in fact have that $\S\inseg \R_0$ and so $\d(\T\rest \a)$ is not a Woodin cardinal in $\R$. Thus, it must be the case that $\R'_0\insegeq \S'$. This means that $\pi^{\U_0}$ exists. Let now $\U$ be the copy of $\U_0$ on $\R$ using the identity map. The extenders of $\U$ and the tree structure of $\U$ are the same as the extenders of $\U_0$ and the tree structure of $\U_0$. Let $\R'$ be the last model of $\U$. 

Now $\U$ again is a valid move for $I$ in $\mathcal{G}(\M, 0, 3)$, and so we let $I$ play it. $II$ has to now either accept $\U$ or play a maximal branch of $\U$. If $II$ plays a maximal non-cofinal branch then we have two different branches $c_0, c_1$ of some $\U\rest \l$ such that both $\Q(c_0, \U\rest \l)$ and $\Q(c_1, \U\rest \l)$ exist, one of them is $\omega_1+1$ iterable and the other is $\Pi^1_2$-iterable. It then follows from clause 3 of \cite[Lemma 2.2]{PWIM} that  $\Q(c_0, \U\rest \l)=\Q(c_1, \U\rest \l)$ implying that $c_0=c_1$, contradiction. Thus, $II$ must accept it. But now we are in the second scenario of this repetitive argument, namely if $\nu'$ is the second Woodin cardinal of $\R'$ then $\R'$ as a $\nu'$-mouse is $\Pi^1_2$-iterable and not 1-small while $\S'$ as $\nu'$-mouse is $\omega_1+1$-iterable and is 1-small. Thus, \cite[Lemma 3.1]{PWIM} implies that  $\S'\insegeq \R'$, which in fact implies that $\S\insegeq \R$ and hence, $\d(\T\rest \a)$ is not a Woodin cardinal in $\R$.

We thus have that the coiteration $(\U_0, \W)$ of $(\R_0, \S)$ that we described above is not successful, which can only happen if we fail to find a branch $c$ of $\U_0$ such that $\Q(c, \U_0)$ exists and $\Q(c, \U_0)=\Q(d, \W)$, where once again $d$ is the unique branch of $\W$ that is according to the unique strategy of $\S$. We again let $\U$ be the copy of $\U_0$ onto $\R$ via identity and let $I$ play it in the second round of $\mathcal{G}(\M, 0, 3)$. Clause 3 of \cite[Lemma 2.2]{PWIM} implies that $II$ must play a cofinal branch $c$ of $\U$ such that $\M^\U_c$ is well-founded. Clause 3 of \cite[Lemma 2.2]{PWIM} implies that $\Q(c, \U)$ cannot exists while \cite[Lemma 3.1]{PWIM} implies that $\Q(d, \W)\insegeq \M^\U_c$.
\end{proof}
We thus have that $b$ is a cofinal branch. Assume then $\N\models ``\d(\T)$ is not a Woodin cardinal". Let $\S\insegeq \N$ be the longest such that $\S\models ``\d(\T)$ is a Woodin cardinal". Assume first that $\Q(b, \T)$ exists and set $\R=\Q(b, \T)$. We want to see that $\R=\S$, and this can be achieved by repeating the proof of Case 1 of the Lemma above. Assume then $\Q(b, \T)$ doesn't exist. We then let $\R=\M^\T_b$ and argue as in Case 2 of the Lemma above to conclude that $\S\insegeq \R$, contradicting the fact that $\Q(b, \T)$ doesn't exist.

Assume next that $\N\models ``\d(\T)$ is a Woodin cardinal". Let $\S\insegeq \N$ be such that $\S$ is a $\d(\T)$-mouse. We want to see that $\S\insegeq \M^\T_b$. Assume first that $\Q(b, \T)$ exists and set $\R=\Q(b, \T)$. We then compare $\R$ and $\S$ as in Case 1 of the Lemma and conclude that $\S\insegeq \R$ as $\R$ witnesses that $\d(\T)$ is not a Woodin cardinal while $\S$ does not. Suppose then $\Q(b, \T)$ doesn't exist and set $\R=\M^\T_b$. We then compare $\R$ with $\S$ as in Case 2 of the Lemma above. Just like in that proof, the conclusion once again is that $\S\insegeq \R$. 
\end{proof}

\section{The proof of \rthm{main theorem}}\label{the proof}

After setting up some notation, we will present the main ideas of the proof in \rsec{the main ideas}

Fix $n$. Below we prove \rthm{main theorem} for $n$. As \cite{Hjorth} proves \rthm{main theorem} for $n=0$, we might just as well assume that $n\geq 1$. As was already mentioned in clause 5 of \rrev{sargpre}, we have that
\begin{center}
$\sup_{m<\omega}\gg^{2n+1}_{m, \infty}=\k^1_{2n+3}$.
\end{center}
 Fix then $m<\omega$ such that $\gg^{2n+1}_{m, \infty}\geq \d^1_{2n+2}$. 
We then let  $(\S_0, \xi_0)$ be such that
\begin{enumerate}
\item $\S_0$ is a complete iterate of $\M^\#_{2n+1}$ such that $\T_{\M^\#_{2n+1}, \S_0}$ is below the least Woodin cardinal of $\M^\#_{2n+1}$,
\item $\xi_0\leq \gg_{m}^{\S_0}$,
\item $\pi_{\S_0, \infty}(\xi_0)=\d^1_{2n+2}$.
\end{enumerate}
Let $x_0$ be a real coding $(\S_0, \xi_0)$ and let ${\sf{Code}}$ be the set of reals $y$ such that $y$ codes a pair $(\R_y, \tau_y)$ with a property that 
\begin{enumerate}
\item $\R_y$ is a complete iterate of $\S_0$ such that $\T_{\S_0, \R_y}$ is below the least Woodin cardinal of $\S_0$,
\item $\tau_y<\pi_{\S_0, \R_y}(\xi_0)$. 
\end{enumerate}
For $x, y\in {\sf{Code}}$ set $x\leq^* y$ if and only if letting $\Q$ be the common complete iterate of $\R_x$ and $\R_y$, $\pi_{\R_x, \Q}(\tau_x)\leq \pi_{\R_y, \Q}(\tau_y)$. 
We have that there is a formula $\phi$ such that for all $(x, y)\in \bR^2$, 
\begin{center}
$x\leq^* y \iff \M_{2n}(x, y, x_0)\models \phi[x, y, s_m]$.
\end{center}
It follows that $\leq^*$ is $\Delta^1_{2n+3}(x_0)$ (see clause 10 of \rsec{pi iterability}). For $x\in \dom(\leq^*)$, let $\gg_x=\pi_{\R_x, \infty}(\tau_x)$. We have that the length of $\leq^*$ is $\d^1_{2n+2}$, and for each ordinal $\a<\d^1_{2n+2}$, there is $x\in \dom(\leq^*)$ such that $\gg_x=\a$.

Towards a contradiction, suppose $(A_\a: \a<\d^1_{2n+2})$ is a sequence consisting of distinct $\bS^1_{2n+2}$-sets.  
 Let $U\subseteq \bR^2$ be a universal $\Sigma^1_{2n+2}$ formula\footnote{I.e., for each $A\in \bS^1_{2n}$ there is $y\in \bR$ such that $A=\{u\in \bR: U(y, u)\}=_{def}U_y$ and moreover, if $y'\in \bR$ and $A\in \Sigma^1_{2n}(y')$ then there is a real $y$ recursive in $y'$ such that $A=U_y$. $U(y, u)$ essentially says that ``if $\phi$ is the $\Sigma^1_{2n}$-formula whose G\"odel number is $y(0)$ then $\phi[y', u]$ holds where $y'(n)=y(n+1)$".} and let for $\a<\d^1_{2n+2}$, $B_\a=\{ y: \{ z: U(y, z)\}=A_\a\}$.
Using the ${\sf{Coding\ Lemma}}$ (see \cite[Theorem 2.12]{Jackson}) we can find a real $z$ Turing above $x_0$ and a $\Sigma^1_{2n+3}(z)$ set $D^*\subseteq  \dom(\leq^*)\times \bR$ such that 
\begin{enumerate}
\item for each $x\in \dom(\leq^*)$, $D^*_x\not =\emptyset$,
\item for each $(x, y)\in D^*$, $y\in B_{\gg_x}$ (therefore, $U_y=A_{\gg_x}$).
\end{enumerate}
Let $D\in \Pi^1_{2n+2}(z)$ be such that $D\subseteq \bR^3$ and $(x, y)\in D^*\iff \exists u (x, y, u)\in D$. To get that $\dom(D^*)=\dom(\leq^*)$ we use the fact that $\Sigma^1_{2n+3}(z)$ is closed under $\exists^\bR$. For details, see the discussion above \cite[Lemma 2.13]{Jackson}.

We now set $\M=_{def}\M_{2n+1}(z)$. Given a complete iterate $\N$ of $\M$, let $\d_\N$ be the least Woodin cardinal of $\N$ and $\k_\N$ be the least $<\d_\N$-strong cardinal of $\N$. 

 If $x\in \bR$ codes a countable premouse then we let $\R_x$ be this premouse and let ${\sf{C}}'$ be the set of reals that code a countable $\Pi^1_{2n+2}$-iterable $\M_{2n+1}^\#$-like premouse over some real. For $x\in {\sf{C}}'$ we let $\nu_x$ be the least Woodin of $\R_x$ and $\mu_x$ be the least $<\nu_x$-strong of $\R_x$. Because $\R_x$ may not be iterable, $\R_x$ may not satisfy condensation (see \cite[Theorem 5.1]{OIMT}). Because for $u\in \bR$, $\M_{2n+1}^\#(u)$ is iterable it does have condensation, and in particular, it follows from \rlem{getting an fbcut} that $\M_{2n+1}(u)\models {\sf{Cond}}$. Let now ${\sf{C}}$ be the set of $x\in {\sf{C}}'$ such that 
\begin{enumerate}
\item $\R_x\models {\sf{Cond}}$,
\item $\R_x$ is translatable (see clause 3 of \rrev{s constructions}),
\item for all $\eta<\d$, $\R_x|\eta$ is $\eta+1$-iterable in $\R_x$ (see clause 11 of \rrev{s constructions}).
\end{enumerate}
 It follows from clause 1, 3 and 11 of \rrev{s constructions} that if $x$ codes $\M_{2n+1}^\#(u)$ then $x\in {\sf{C}}$.

 We now work towards applying Hjorth's reflection argument to produce a code of each $A_\gg$ below $\k_\M$. Let $w\subseteq \omega$ be a $\Pi^1_{2n+3}(z)$ real that is not $\Sigma^1_{2n+3}(z)$. Let $\psi$ be a $\Sigma^1_{2n+2}$ formula such that
\begin{center}
$n\in w\iff \forall z' \psi[n, z, z']$
\end{center}

\subsection{The main ideas of the proof}\label{the main ideas}

Before we go on, we give an outline of the proof that follows. The main idea of the proof is that each $A_\a=U_{w_0}$ for some $w_0$ that can be obtained ``below $\k_\M$". We say $w_0$ is a $U$ code for $A_\a$. This means that for each $\a<\d^1_{2n+2}$ we would like to find some complete iterate $\N$ of $\M$, $\xi<\k_\N$ and a $g\subseteq Coll(\omega, \xi)$ such that there is a $U$-code for $A_\a$ in $\N[g]$. 

Notice that, using genericity iterations, for each $\a<\d^1_{2n+2}$, we can find a complete iterate $\N$ of $\M$ such that $A_\a$ has a $U$-code that is $\N$-generic for the extender algebra at $\d_\N$. Because $\N\models {\sf{Cond}}$, this allows us to obtain a $\xi<\k_\N$ such that $A_\a$ has a $U$-code that is generic over the extender algebra of $\N|\xi$. To make this work, we need to make sure that the $U$-code of $A_\a$ can be identified in a  first order way over $\N$, and this is achieved by the formula $\theta_0$ introduced below.

However, the above is not enough. We need to find $\N$ as above which is $\a$-stable in the sense that for any complete iterate $\Q$ of $\N$ there is such a $w_0$ in $\Q^{Coll(\omega, \pi_{\N, \Q}(\xi))}$. Such a uniformity is not easy to achieve, and here, we use our reals $w$ and $w'$ introduced below. $w$ is a $\Pi^1_{2n+3}(z)$ real which is not $\Sigma^1_{2n+3}(z)$ while $w'$ is a $\Sigma^1_{2n+3}(z)$ subset of $w$. 

The $\Pi^1_{2n+2}(z)$-formula $\theta$ is the key. $w'$ is the real defined by $\exists \vec{y}\theta(x, \vec{y})$. The formulas are defined in a way that assures the following. If $k\in w'$ then the least initial segment of $\M$ that ``proves" that $k\in w$ is below the least initial segment of $\M$ that has a $U$-code for $A_\a$. Thus, if $k\in w-w'$ then for any $\a$ (roughly speaking), the least initial segment of $\M$ that has a $U$-code for $A_\a$ is below the least initial segment that proves that $k\in w$. This means that if we pick $\zeta$ to be the least such that $\M|\zeta$ proves that $k\in w$ then each $A_\a$ will have a $U$-code below $\zeta$. 

Above by ``the least initial segment of $\M$ that proves $k\in w$" we mean that the formula $``k\in w"$ is first order over $\M$ as explained in \rprop{correctness pi}, and therefore, because $\M\models {\sf{Cond}}$, we have some $\xi<\k_\M$ such that $``k\in w"$ is first order over $\M|\xi$. The least such $\xi$, which we call $\zeta$, determines the least level of $\M$ that proves that $k\in w$.

Because of ${\sf{Cond}}$, we can assume that $\zeta$ is a cutpoint of $\M$. We then finish the proof by showing that if $\Q=\M|\zeta$ then (roughly) each $A_\a$ has a $U$-code in a generic extension of a complete iterate of $\Q$. But the set of complete iterates of $\Q$ is too simple (this follows from calculations presented in \rlem{complexity}), and one could not define $\d^1_{2n+2}$ many distinct $\mathbf{\Sigma}^1_{2n+2}$ sets whose $U$-codes can be obtained in generic extensions of complete iterates of $\Q$. This then gives a contradiction as $(A_\a: \a<\d^1_{2n+2})$ consists of distinct $\mathbf{\Sigma}^1_{2n+2}$ sets whose $U$-codes can be obtained in generic extensions of complete iterates of $\Q$.
 
\subsection{The formula $\theta$}\label{sec:theformtheta}

We will need the following lemma. Recall that $U$ is our fixed universal $\Sigma^1_{2n+2}$ set.

\begin{lemma}\label{existence of conditions} Suppose $\N$ is a $\M_{2n+1}^\#$-like $\Pi^1_{2n+2}$-iterable $z$-premouse, $(\nu_0, \nu_1,..., \nu_{2n})$ are the Woodin cardinals of $\N$ enumerated in increasing order, $\mathcal{E}$ is a weakly appropriate set of extenders (relative to $\nu_0$), $\nu<\nu_0$ and suppose $(w_0,( w_1, w_2, w_3))$ is a sequence of reals such that 
\begin{enumerate}
\item $D(w_1, w_2, w_3)$ and $U(w_2, w_0)$,
\item $w_0$ and $(w_1, w_2, w_3)$ are generic over $\N$ for ${\sf{Ea}}^\N_{\nu_0, \nu, \mathcal{E}}$. 
\end{enumerate}
There is then a condition $(p, q)\in {\sf{Ea}}^\N_{\nu_0, \nu, \mathcal{E}}\times {\sf{Ea}}^\N_{\nu_0, \nu, \mathcal{E}}$ such that 
\begin{enumerate}
\item $w_0\models p$ and $(w_1, w_2, w_3)\models q$,
\item $\N\models (p, q)\forces ``D({\sf{ea}}^r_1, {\sf{ea}}^r_2, {\sf{ea}}^r_3) \wedge U( {\sf{ea}}^r_2, {\sf{ea}}^l)"$.
\end{enumerate}
\end{lemma}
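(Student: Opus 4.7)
My approach would combine Hjorth's product $\d$-c.c.\ result for the extender algebra (clause 5 of \rrev{ea}) with mouse correctness. First I would view the pair $(w_0, (w_1, w_2, w_3))$ as a product generic $G = (G_0, G_1)$ for ${\sf{Ea}}^\N_{\nu_0, \nu, \mathcal{E}} \times {\sf{Ea}}^\N_{\nu_0, \nu, \mathcal{E}}$ over $\N$; the $\nu_0$-c.c.\ of the product ensures that this is genuinely a product generic filter, so the names ${\sf{ea}}^l$ and $({\sf{ea}}^r_1, {\sf{ea}}^r_2, {\sf{ea}}^r_3)$ are evaluated as $w_0$ and $(w_1, w_2, w_3)$ respectively.

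Next I would argue that $\N[G]$ is $\Sigma^1_{2n+2}$-correct. Since $\N$ is $\M_{2n+1}^\#$-like with Woodin cardinals $\nu_0 < \nu_1 < \cdots < \nu_{2n}$ and the product forcing has size $\nu_0$ with the $\nu_0$-c.c., the cardinals $\nu_1, \ldots, \nu_{2n}$ remain Woodin in $\N[G]$. Combined with the $\Pi^1_{2n+2}$-iterability of $\N$ and the correctness theory of \cite{PWIM}, this should yield $\Sigma^1_{2n+2}$-correctness of $\N[G]$, and hence absoluteness between $V$ and $\N[G]$ for both the $\Sigma^1_{2n+2}$ statement $U$ and the $\Pi^1_{2n+2}$ statement $D$. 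Thus hypothesis 1 transfers: $\N[G] \models D(w_1, w_2, w_3) \wedge U(w_2, w_0)$. The forcing theorem inside $\N$ then yields a condition $(p, q) \in G$ with $\N \models (p, q) \forces D({\sf{ea}}^r_1, {\sf{ea}}^r_2, {\sf{ea}}^r_3) \wedge U({\sf{ea}}^l, {\sf{ea}}^r_2)$, and membership of $(p,q)$ in $G$ unwinds to exactly $w_0 \models p$ and $(w_1, w_2, w_3) \models q$.

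The hard part will be justifying $\Sigma^1_{2n+2}$-correctness of $\N[G]$ from only $\Pi^1_{2n+2}$-iterability of $\N$, rather than from full $\omega_1+1$-iterability, and in particular tracking how the standard correctness proofs survive when the $\M_{2n}$-like factors that appear in unpacking $D$ and $U$ are themselves only $\Pi^1_{2n}$-iterable. If correctness of $\N[G]$ proves awkward to state directly, an alternative route I would pursue is to bypass $\N[G]$ altogether: rewrite $\theta := D \wedge U$ as a $\Pi^1_{2n+3}$ formula, apply the $\phi_{\N, d}$-translation of clause 3 of \rrev{ea} with $d = (\nu, \nu_0, \nu_1, \ldots, \nu_{2n})$, and then use an existence-form converse of \rprop{correctness pi} (in the spirit of \rprop{correctness1}) to produce a pair of names in the nested extender-algebra forcing directly from $V \models \theta[w_0, w_1, w_2, w_3]$, finally converting these nested names to a product condition using the product $\d$-c.c.\ once more.
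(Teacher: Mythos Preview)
Your main approach has a genuine gap at the very first step: you assume that $(w_0,(w_1,w_2,w_3))$ can be viewed as a product-generic filter $G=(G_0,G_1)$ over $\N$, but hypothesis~2 only tells you that $w_0$ and $(w_1,w_2,w_3)$ are \emph{each} generic over $\N$ for ${\sf Ea}^\N_{\nu_0,\nu,\mathcal{E}}$, not that they are mutually generic. The $\nu_0$-c.c.\ of the product (Hjorth's result) tells you the product is a nice forcing, but it does not upgrade two separately generic filters to a product-generic pair; indeed, in the applications of this lemma (to $\R_u$ via clause~2 of \rrev{s constructions}) the reals $w_0,w_1,w_2,w_3$ all live inside the premouse and are each made generic by the $S$-construction argument, with no mutual genericity guaranteed. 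Without a genuine product generic, $\N[G]$ is not even well-defined, so the correctness and forcing-theorem steps never get off the ground. Your worry about correctness from mere $\Pi^1_{2n+2}$-iterability is actually not the obstacle: clause~11 of \rsec{pi iterability} gives that $\N$ is $\omega_1$-iterable above $\nu_0$, so $\N=\M_{2n}^\#(\N|\nu_0)$ and correctness of small generic extensions follows.

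The paper's proof sidesteps the mutual-genericity problem by exploiting the specific semantic content of $D$ and $U$ rather than treating $D\wedge U$ as a black-box $\Pi^1_{2n+2}\wedge\Sigma^1_{2n+2}$ statement. First it works in $\N[(w_1,w_2,w_3)]$ alone to find $q_0$ forcing both $D({\sf ea}_1,{\sf ea}_2,{\sf ea}_3)$ and that ${\sf ea}_1$ codes the same internal ordinal $\gg$ that $w_1$ does. Then it discards $(w_1,w_2,w_3)$ and takes a \emph{fresh} generic $(g_1,g_2,g_3)\models q_0$ that is chosen generic over $\N[w_0]$, so now mutual genericity with $w_0$ holds by construction. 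Since $q_0$ pins down $\gg$, we get $\gg_{g_1}=\gg_{w_1}$ in $V$, hence $w_0\in A_{\gg_{g_1}}=U_{g_2}$, and correctness of $\N[w_0][(g_1,g_2,g_3)]$ then yields $U(g_2,w_0)$ internally. The forcing theorem now produces the desired $(p,q)$. The point is that the ordinal $\gg$ is the invariant that survives the swap from $(w_1,w_2,w_3)$ to $(g_1,g_2,g_3)$; a purely syntactic correctness argument like yours cannot make such a swap.
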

\begin{proof} First of all, notice that $\N=\M_{2n}^\#(\N|\nu_0)$ (see clause 11 of \rsec{pi iterability}). Suppose $w_1$ codes a pair $(\R, \xi)$. Working in $\N[w_1, w_2, w_3]$, let $\gg=\pi_{\R, \mH^\N}(\xi)$. We then have that there is a condition $q_0\in {\sf{Ea}}^\N_{\nu_0, \nu, \mathcal{E}}$ such that $(w_1, w_2, w_3)\models q_0$ and  $q_0\forces ``$if $(\S, \b)$ is the pair coded by $\sf{ea}_1$ then $\pi_{\S, \mH}(\b)=\check{\gg}$"\footnote{Here we use the fact that the directed system can be internalized to $\N$. See clause 8 of \rrev{gamma stable}.} and also, $q_0\forces D({\sf{ea}}_1, {\sf{ea}}_2, {\sf{ea}}_3)$. Let now $(g_1, g_2, g_3)$ be $\N[w_0]$-generic for ${\sf{Ea}}^\N_{\nu_0, \nu, \mathcal{E}}$ such that $(g_1, g_2, g_3)\models q_0$. Because $\N=\M_{2n}^\#(\N|\nu_0)$ (implying that $\N$ is $\Sigma^1_{2n+2}$-correct), it follows that $g_1\in {\sf{Code}}$ and that $w_0\in A_{\gg_{g_1}}$,\footnote{Notice that because $w_1\in {\sf{Code}}$ we really do have that $\R$ is a complete iterate of $\M_{2n+1}^\#$. Because $\mH^\N$ is a complete iterate of $\R$, this implies $\pi_{\mH^\N, \infty}(\gg)=\gg_{w_1}$. Because $D(g_1, g_2, g_3)$ holds (by absoluteness, see \rrev{ea}), we have that if $(\R', \xi')$ is the pair coded by $g_1$ then $\R'$ is a complete iterate of $\M_{2n+1}^\#$ and because $\pi_{\R', \mH^\N}(\xi')=\gg$, we have that $\gg_{g_1}=\gg_{w_1}$. Thus, $w_0\in A_{\gg_{g_1}}$.},\footnote{The fact that $\mH^\N$ is a complete iterate of $\R$ follows from generic comparisons (this point also appeared in \rlem{stable p}). Recall that $\mH^\N$ is the direct limit of all complete iterates of $\M_{2n+1}^\#$ that are in $\N|\nu$ where $\nu$ is the least inaccessible cardinal of $\N$ above the least Woodin cardinal of $\N$. Just like in the case of HOD analysis of $L[x][G]$ presented in \cite{HODCoreModel}, this direct limit can be shown to coincide with the direct limit of all iterates of $\M_{2n+1}^\#$ that appear in $N[H]$ where $H\subseteq Coll(\omega, <\nu)$ generic over $\N$. These points are explained in detail in various publications such as  \cite{SargPre}, \cite{Sandra}, \cite{Varsovian} and \cite{HODCoreModel}.}. Then we must have that $\N[w_0][(g_1, g_2, g_3)]\models U(g_2, w_0)$. The desired $(p, q)$ can now be found using the forcing theorem.
\end{proof}
\textbf{The formula $\theta_0$:}\\\\
Let $\theta_0(u, w_0, w_1, w_2, w_3)$ be the  $\Pi^{1}_{2n+2}(z)$-formula that is the conjunction of the following clauses:
\begin{enumerate}
\item $D(w_1, w_2, w_3)$\footnote{Notice that $z$ is implicitly build into $D$.},
\item $u\in {\sf{C}}$ and $\R_u$ is $\Pi^1_{2n+2}$-iterable,
\item $(z, w_0, w_1, w_2, w_3)\in \R_u\cap \bR^5$,
\item $R_u\models U(w_2, w_0)$.
\end{enumerate}

At this point the reader may find it useful to review \rprop{correctness1}, clause 3 and 6 of \rrev{ea}, and clause 2 of \rrev{s constructions}. 

Let $U'$ be a $\Pi^1_{2n+1}$ set such that $U(\vec{a})\iff \exists b U'(b,\vec{a})$. 

Suppose now that $\theta_0(u, w_0, w_1, w_2, w_3)$ holds. Let $\K={\sf{StrLe}}(\R_u, z)$ and $\mathcal{E}=\mathcal{E}_{sm}^\K$ (see  \rdef{appropriate sequence}). It follows from \rlem{existence of conditions} that there is a condition $(p, q)\in {\sf{Ea}}^\K_{\nu_u, \mathcal{E}}\times {\sf{Ea}}^\K_{\nu_u, \mathcal{E}}$ such that
\begin{enumerate} 
\item $w_0\models p$,
\item $(w_1, w_2, w_3)\models q$, and
\item $(p, q)\forces \exists b( U'_{\K[({\sf{ea}}^l, {\sf{ea}}^r)], d}(b, {\sf{ea}}^r_2, {\sf{ea}}^l))$ where letting $(\nu_u, \xi_1, ..., \xi_{2n})$ be the Woodin cardinals of $\K$ enumerated in increasing order, $d=(\xi_1, \xi_2,..., \xi_{2n})$.\footnote{See item 3 of \ref{ea}. According to the terminology defined there, we have that $U'_{\K[({\sf{ea}}^l, {\sf{ea}}^r)], d}(b, {\sf{ea}}^r_2, {\sf{ea}}^l)$ is an instances of $\phi_{\M, d}$ with $\phi$ the $\Pi^1_{2n+1}$ formula defining $U'$ and $\M=\K[({\sf{ea}}^l, {\sf{ea}}^r)]$.}
\end{enumerate}
Clause 3 above is a consequence of \rprop{correctness1} and the fact that because $\R_u$ is $\Pi^1_{2n+2}$-iterable, it is $\omega_1+1$-iterable above $\nu_u$ (see clause 11 of \rsec{pi iterability} and recall that under $\sf{AD}$, $\omega_1$-iterability implies $\omega_1+1$-iterability). Notice that to verify $U'$ we skip the extender algebra ${\sf{Ea}}^\K_{\xi_1, \nu_u}$. However, $\xi_1$ is not a lazy Woodin cardinal, it has a noble role of making sure that the witness $b$ can be found inside $\K[({\sf{ea}}^l, {\sf{ea}}^r)]$. The fact that we have skipped $\xi_1$ will be used in the proof of \rlem{w is subset}.
 
Because $\R_u\models {\sf{Cond}}$, we have that if $\theta_0(u, w_0, w_1, w_2, w_3)$ holds then in fact there are $\a<\b<\nu_u$ such that
\begin{enumerate}
\item $\R_u|\b\models {\sf{ZFC}}+``$there are $2n+1$ many Woodin cardinals",
\item $\R_u|\b\models ``\a$ is the least Woodin cardinal",
\item $\a$ is an $fb$-cut of $\R_u$ (see \rdef{fbcut}, in particular, $\a$ is an $\R_u$-cardinal),
\item setting $\K={\sf{StrLe}}(\R_u|\b, z)$ and $\mathcal{E}=\mathcal{E}^\K_{sm}$, there is a condition $(p, q)\in {\sf{Ea}}^\K_{\a, \mathcal{E}}\times {\sf{Ea}}^\K_{\a, \mathcal{E}}$ such that 
\begin{enumerate}
\item $w_0\models p$,
\item $(w_1, w_2, w_3)\models q$, and
\item $(p, q)\forces \exists b(U'_{\K[({\sf{ea}}^l, {\sf{ea}}^r)], d}({\sf{ea}}^r_2, {\sf{ea}}^l))$  where letting $(\a, \xi_1, ..., \xi_{2n})$ be the Woodin cardinals of $\K$ enumerated in increasing order, $d=(\xi_1, \xi_2,..., \xi_{2n})$.
\end{enumerate}
\end{enumerate}
If $s=(u, w_0, w_1, w_2, w_3)$ then we say that $(\a, \b)$ witnesses $\theta_0$-reflection for $s$ if $(\a, \b)$ satisfies clause 1-4 above. Assuming $\theta_0(u, w_0, w_1, w_2, w_3)$ holds with $s=(u, w_0, w_1, w_2, w_3)$, we let $(\a_s, \b_s)$ be the lexicographically least witnessing $\theta_0$-reflection for $s$. Because $\R_u\models {\sf{Cond}}$ we have that  $(\a_s, \b_s)\in \mu_u^2$. Notice that $(\a_s, \b_s)$ is definable from $(z, w_0, w_1, w_2, w_3)$ over $\R_u$ in the sense that there is a formula $\theta'$ such that $(\a_s, \b_s)$ is the unique pair $(\a, \b)$ such that \begin{center}
$\R_u\models \theta'[(\a, \b), z, w_0, w_1, w_2, w_3]$.
\end{center}
\textbf{The formula $\theta$:}\\\\
Let $\psi'$ be a $\Pi^1_{2n+1}$ such that $\psi(...)\iff \exists v\psi'(..., v)$ ($\psi$ is the formula used to define $w$, see just before \rsec{sec:theformtheta}).
Let $\theta(k, u, w_0, w_1, w_2, w_3)$ be the  $\Pi^1_{2n+2}(z)$-formula that is the conjunction of the following clauses:
\begin{enumerate}
\item $k\in \omega$,
\item $\theta_0(u, w_0, w_1, w_2, w_3)$,
\item letting $s=(u, w_0, w_1, w_2, w_3)$, in $\R_u$,\footnote{Here we need to add ``if $(\a_s, \b_s)$ is such that $\R_u\models \theta'[(\a, \b),z, w_0, w_1, w_2, w_3]$ then".} there are ordinals $\a'<\b'<\a_s$ such that 
\begin{enumerate}
\item $\R_u|\b'\models ``{\sf{ZFC}}+``$there are $2n+1$ many Woodin cardinals" +$``\a'$ is the least Woodin cardinal",
\item $\a'$ is an $fb$-cut in $\R_u|\a_s$ (and hence, in $\R_u$), and\\\\
setting $\K={\sf{StrLe}}(\R_u|\b')$, $\mathcal{E}=\mathcal{E}_{sm}^{\K|\a'}$ and for $i\in [1, 2n]$, letting $\xi_i$ be the $i+1$st Woodin cardinal of $\K$ and $d=(\xi_1, \xi_2, ..., \xi_{2n})$ the following holds:
\item If $p\in {\sf{Ea}}^{\K}_{\a', \mathcal{E}}$ then $\K\models p\forces \exists v(\psi'_{\K[{\sf{ea}}], d}[k, z, {\sf{ea}}, v])$.
\end{enumerate}
\end{enumerate}
\subsection{The real $w'$}
Set
\begin{center}
 $k\in w' \iff \exists u, w_0, w_1, w_2, w_3 (\theta[k,u, z, w_0, w_1, w_2, w_3])$.
 \end{center}
 
Clearly $w'$ is $\Sigma^1_{2n+3}(z)$.  The proof of the next lemma is similar to the proof of Claim 2 that appears in the proof of \cite[Theorem 3.3]{Hjorth}.
 
 \begin{lemma}\label{w is subset} $w'\subseteq w$.
 \end{lemma}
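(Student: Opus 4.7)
My plan is to unpack the hypothesis $k\in w'$, then perform a genericity iteration to reflect the existential statement into $V$. Fix $u, w_0, w_1, w_2, w_3$ witnessing $k\in w'$ and set $s=(u, w_0, w_1, w_2, w_3)$. The definition of $\theta$ provides ordinals $\a'<\b'<\a_s$ with $\R_u|\b'\models{\sf{ZFC}}+``\a'$ is the least of $2n+1$ Woodin cardinals$"$, $\a'$ an $fb$-cut in $\R_u$, and, writing $\K={\sf{StrLe}}(\R_u|\b')$ and $d=(\xi_1,\dots,\xi_{2n})$ for the Woodin cardinals of $\K$ above $\a'$, every $p\in {\sf{Ea}}^\K_{\a', \mathcal{E}^\K_{sm}}$ forces over $\K$ that $\exists v\, \psi'_{\K[{\sf{ea}}], d}[k, z, {\sf{ea}}, v]$.

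To conclude $k\in w$, I fix an arbitrary $z'\in\bR$ and aim to produce $v$ with $\psi'[k, z, z', v]$. I apply the $\Pi^1_{2n+2}$-iteration strategy of $\R_u$ to run a standard $z'$-genericity iteration for the extender algebra at $\a'$, using extenders drawn from $\mathcal{E}^{\R_u}_{le}$ whose critical points and lengths lie below the running image of $\a'$. Since $\R_u|\b'\models``\a'$ is Woodin$"$ and $\mathcal{E}^{\R_u}_{le}$ is appropriate (\rlem{e is appropriate}), the iteration terminates in fewer than $\a'$ steps. Let $\T$ be the resulting iteration with final model $\R_u^*$, and set $\a_*=\pi^\T(\a')$, $\b_*=\pi^\T(\b')$. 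By \rcor{cor to above}, $\a_*$ is an $fb$-cut in $\R_u^*$, so \rlem{generic over le}, applied to $\S=\R_u^*$ with $\tau=\pi^\T(\nu_u)$ and the pair $(\a_*, \b_*)$, gives that $z'$ is generic over $\K^*={\sf{StrLe}}(\R_u^*|\b_*)$ for ${\sf{Ea}}^{\K^*}_{\a_*, \mathcal{E}^{\K^*}_{sm}}$.

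The forcing statement of clause (c) of $\theta$ is first-order over $\K$, and by elementarity of $\pi^\T$ it holds of $\K^*$ with $d$ replaced by $d^*=\pi^\T(d)$. Hence, letting $g\subseteq{\sf{Ea}}^{\K^*}_{\a_*, \mathcal{E}^{\K^*}_{sm}}$ be the generic determined by $z'$, we get $\K^*[g]\models\exists v\, \psi'_{\K^*[g], d^*}[k, z, z', v]$. Applying \rprop{correctness pi} to $\K^*$, whose iterability follows from that of $\R_u^*$ via the backgrounded construction (cf.\ clause 6 of \rrev{s constructions}), I obtain $\psi'[k, z, z', v]$ in $V$, whence $\psi[k, z, z']$. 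Since $z'$ was arbitrary, $k\in w$.

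I expect the main obstacle to be the transfer of the forcing statement from $\K$ to $\K^*$ along $\T$: this demands that the $S$-construction ${\sf{StrLe}}$ commutes with the genericity iteration and that the appropriate extender sets are preserved. The commutativity comes down to the preservation of the $fb$-cut structure under iteration by extenders in $\mathcal{E}^{\R_u}_{le}$, which is handled by \rcor{cor to above} together with \rlem{coherence of fb} and \rlem{e is appropriateA}. Once this is in place, the final $\Pi^1_{2n+1}$-correctness is routine via \rprop{correctness pi}.
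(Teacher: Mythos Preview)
Your argument has two genuine gaps, both stemming from the fact that $\R_u$ is only $\Pi^1_{2n+2}$-iterable, not $\omega_1$-iterable.

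First, and most seriously, your final appeal to \rprop{correctness pi} does not go through. That proposition requires the model to be $\omega_1{+}1$-iterable. You justify iterability of $\K^*$ via clause 6 of \rrev{s constructions}, but that clause transfers $\omega_1$-iterability from $\P$ to ${\sf{StrLe}}(\P,z)$; here you would need $\P=\R_u^*|\b_*$ to be $\omega_1$-iterable. It is not: $\R_u^*|\b_*$ is an initial segment lying strictly below the least Woodin of $\R_u^*$, and $\Pi^1_{2n+2}$-iterability (a game notion in which player II may play merely maximal branches) does not yield genuine $\omega_1$-iterability of such initial segments. The paper avoids this entirely: instead of invoking \rprop{correctness pi}, it feeds $\T\rest\xi{+}1$ into the first round of $\mathcal{G}(\R_u,0,2n+1)$, has player II accept, and then plays iterations of $\K[z']$ above $\a'$ as legal second-round moves (clause 3(c) of \rsublem{cut back}). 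The remaining rounds of the game then verify $\psi'[k,z,z',v]$; this is the content of \rsublem{psi holds}.

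Second, your termination claim for the genericity iteration below the running image of $\a'$ is unjustified. The ordinal $\a'$ is Woodin only in $\R_u|\b'$, not in $\R_u$, so the standard $\delta$-c.c.\ termination argument does not apply. More fundamentally, $\Pi^1_{2n+2}$-iterability does not guarantee that a genericity iteration can be kept correct all the way to termination: even the paper's iteration below the genuine Woodin $\nu_u$ may reach a limit stage where $\M_{2n}(\c(\T))\models``\d(\T)$ is Woodin$"$ and no correct continuation exists (this is precisely (Pos2)). The paper therefore treats (Pos1) and (Pos2) separately in \rsublem{cut back}, and only \emph{after} the iteration locates the appropriate $\xi,\a',\b'$ inside the iterate; fixing $\a',\b'$ in advance and iterating below them, as you do, does not sidestep this case analysis.
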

 \begin{proof} 
 Suppose $k\in w'$. Let $\Sigma$ be a winning strategy for $II$ in $\mathcal{G}(\R_u, 0, 2n+1)$.  We want to argue that $k\in w$. Fix $s=(u, w_0, w_1, w_2, w_3)$ such that $\theta[k,u, z, w_0, w_1, w_2, w_3]$ holds. We want to show that for each $z'$, $\psi[k, z, z']$ holds. Fix then such a $z'$. 
 
  We have that $\R_u$ is a properly $2n+1$-small $\Pi^1_{2n+2}$-iterable premouse such that $(z, w_0, w_1, w_2, w_3)\in \R_u\cap \bR^5$. We now want to use $\Pi^1_{2n+2}$-iterability of $\R_u$ to make $z'$ generic over the image of ${\sf{Ea}}^{\R_u}_{\nu_u, \mathcal{E}^{\R_u}_{le}}$\footnote{Recall our notation: $\nu_u$ is the least Woodin cardinal of $\R_u$.}. We iterate $\R_u$ producing iteration $\T$ of $\R_u$ by using the rules of $(z', \nu_u, 0)$-genericity iteration to pick extenders (see clause 2 of \rrev{ea}) and by picking branches so that the iteration stays correct. More precisely, if $\gg<\lh(\T)$ and $c=[0, \gg)_{\T}$ then $\Q(c, \T\rest \gg)$ exists and $\Q(c, \T\rest \gg)\insegeq \M_{2n}(\c(\T\rest \gg))$. Thus the resulting iteration $\T$ of $\R_u$ is correct, has no drops and is below $\nu_u$\footnote{Recall our setup, $\nu_u$ is the least Woodin cardinal of $\R_u$.}. These are the two possibilities. Either\\\\
(Pos1) $\T$ has a last model $\R'$ and $z'$ is generic over $\R'$ for ${\sf{Ea}}^{\R'}_{\mu, \mathcal{E}^{\R'}_{le}}$ where $\mu=\pi^{\T}(\nu_u)$, or\\
(Pos2) $\T$ does not have a last model and $\lh(\T)$ is a limit ordinal. \\
\begin{sublemma}\label{cut back} There is $\xi<\lh(\T)$ and $\a'<\b'$ such that the following conditions hold:
\begin{enumerate}
\item  The generators of $\T\rest \xi$ are contained in $\a'$.
\item  $\M^\T_\xi|\b'\models {\sf{ZFC}}+``$there are $2n+1$ many Woodin cardinals" +$``\a'$ is the least Woodin cardinal".
\item Setting $\K={\sf{StrLe}}(\M^\T_\xi|\b')$, $\mathcal{E}=\mathcal{E}_{sm}^{\K|\a'}$ and for $i\in [1, 2n]$, letting $\xi_i$ be the $i+1$st Woodin cardinal of $\K$ and $d=(\xi_1, \xi_2, ..., \xi_{2n})$ the following holds:
\begin{enumerate}
\item If $p\in {\sf{Ea}}^{\K}_{\a', \mathcal{E}}$ then $\K\models p\forces \exists v(\psi'_{\R_u[{\sf{ea}}], d}[k, z, {\sf{ea}}, v])$.
\item $z'$ satisfies all the axioms of ${\sf{Ea}}^{\K}_{\a', \mathcal{E}}$.
\item If player $I$ plays $(\T\rest \xi+1, y)$ where $y\in \bR$ is any real coding $\T\rest \xi+1$ then $\Sigma(\T\rest \xi+1, y)={\sf{accept}}$ and the iterations of $\K[z']$ that are above $\a'$ are legal moves for player $I$ in the second round of $\mathcal{G}(\R_u, 0, 2n+1)$.
\end{enumerate}

\end{enumerate}
\end{sublemma}
\begin{proof}\\\\
\textbf{Suppose first that (Pos1) holds.} Set $(\a, \b)=\pi^\T(\a_s, \b_s)$. We have that $\a<\b<\mu$ and $\a$ is an $fb$-cut in $\R'$. Because $k\in w'$ we have $\a'<\b'<\a$ such that\\\\
(A1) $\R'|\b'\models {\sf{ZFC}}+``$there are $2n+1$ many Woodin cardinals" +$``\a'$ is the least Woodin cardinal",\\
(B1) $\a'$ is an $fb$-cut in $\R'$, and\\\\
setting $\K={\sf{StrLe}}(\R'|\b')$, $\mathcal{E}=\mathcal{E}_{sm}^{\K|\a'}$ and for $i\in [1, 2n]$, letting $\xi_i$ be the $i+1$st Woodin cardinal of $\K$ and $d=(\xi_1, \xi_2, ..., \xi_{2n})$ the following holds:\\\\
(C1) If $p\in {\sf{Ea}}^{\K}_{\a', \mathcal{E}}$ then $\K\models p\forces \exists v(\psi'_{\R_u[{\sf{ea}}], d}[k, z, {\sf{ea}}, v])$.\\\\
We also have that if $\mathcal{E}'=\mathcal{E}^{\R'|\a}_{le}$ then $z'$ satisfies all the axioms of ${\sf{Ea}}^{\R'|\a}_{\a,\mathcal{E}'}$, and hence,\\\\
(D1) $z'$ satisfies all the axioms of ${\sf{Ea}}^{\K}_{\a', \mathcal{E}}$. \\\\
To see (D1), we use \rlem{generic over le} and the fact that both $\a$ and $\a'$ are $fb$-cuts of $\R'$.

 Let $\xi<\lh(\T)$ be the least such that $\M_\xi^\T|\a'=\R'|\a'$. It follows that\\\\
 (E1) the generators of $\T\rest \xi$ are contained in $\a'$ and\\
 (F1) if $y$ is a real coding $\T\rest \xi+1$ then $\Sigma(\T\rest \xi+1, y)={\sf{accept}}$ (see  \rprop{correct branch}). \\\\
 (E1) and (F1) then imply the following.\\\\
(G1) If player $I$ plays $(\T\rest \xi+1, y)$ where $y\in \bR$ is any real coding $\T\rest \xi+1$, $\Sigma(\T\rest \xi+1, y)={\sf{accept}}$ and iterations of $\K[z']$ that are above $\a'$ are legal moves for player $I$ in the second round of $\mathcal{G}(\R_u, 0, 2n+1)$.\\\\
Because $\M_\xi^\T|\b'=\R'|\b'$ we have that (A1)-(G1) imply clauses (1)-(3) of \rsublem{cut back}. Indeed, (1) is a consequence of (E1), (2) is a consequence of (A1), (3a) is a consequence of (C1), (3b) is a consequence of (D1) and (3c) is a consequence of (G1). \\\\
\textbf{Suppose next that (Pos2) holds.} Set $\N=\M_{2n}(\c(\T))$ and $\k=\d(\T)$. \\\\
\textit{Claim.} Let $y\in \bR$ code the pair $(\T, \N|(\k^+)^\N)$ and set $b=\Sigma(\T, y)$. Then
\begin{enumerate}
\item $\N\models ``\k$ is a Woodin cardinal" and $\N|(\k^+)^\N\insegeq \M^\T_b$ and
\item $\k$ is an $fb$-cut of $\M^\T_b$.
\end{enumerate}
\begin{proof} Clause 1 is a consequence of \rprop{correct branch}. To see clause 2 let $\X$ be the fully backgrounded construction of $\c(\T)=\M^\T_b|\k$. Suppose there is $\Y$ which extends $\X$, is constructed by the fully backgrounded construction of $\M^\T_b|\pi^\T_b(\nu_u)$ and $\rho_\omega(\Y)<\k$. Let $F$ be an extender used in $b$ such that $\cp(F)>\rho_\omega(\Y)$\footnote{It might help to review \rcor{cor to above}}. Let $\xi$ be such that $E_\xi^\T=F$. Then $\cp(F)$ is an $fb$-cut of $\M^\T_{\xi+1}$ and $\cp(\pi_{\xi+1, b}^\T)>\cp(F)$ implying that in fact $\cp(F)$ is an $fb$-cut in $\M^\T_b$. Hence, $\rho_\omega(\Y)\geq \cp(F)$, contradiction. 
\end{proof}

Using condensation (applied in $\N$) and \rlem{existence of conditions}, we can find $\b\in (\k, (\k^+)^\N)$ such that
\begin{enumerate}
\item $\N|\b\models {\sf{ZFC}}+``$there are $2n+1$ many Woodin cardinals",
\item setting $\K={\sf{StrLe}}(\N|\b, z)$ and $\mathcal{E}=\mathcal{E}_{sm}^\K$, there is a condition $(p, q)\in {\sf{Ea}}^\K_{\k, \mathcal{E}}\times {\sf{Ea}}^\K_{\k, \mathcal{E}}$ such that 
\begin{enumerate}
\item $w_0\models p$,
\item $(w_1, w_2, w_3)\models q$, and
\item $(p, q)\forces \exists b(U'_{\K[({\sf{ea}}^l, {\sf{ea}}^r)], d}({\sf{ea}}^r_2, {\sf{ea}}^l))$ where letting $(\k, \xi_1, ..., \xi_{2n})$ be the Woodin cardinals of $\K$ enumerated in increasing order, $d=(\xi_1, \xi_2,..., \xi_{2n})$.
\end{enumerate}
\end{enumerate}
Let now $y_0\in \bR$ code the pair $(\T, \N|(\k^+)^\N)$ and set $b=\Sigma(\T, y_0)$. We then have that $\N|\b\insegeq \M^\T_b$ (see clause 1 of the Claim). Notice that because $\k$ is an $fb$-cut of $\M^\T_b$ (see clause 2 of the Claim) and because $\N|\b\insegeq \M^\T_b$, the above clauses imply that $\pi^\T_b(\a_s)\leq \k$.  Because $k\in w'$, we must have $\a'<\b'<\pi^\T_b(\a_s)\leq \k$ such that\\\\
(A2) $\M^\T_b|\b'\models ``{\sf{ZFC}}+``$there are $2n+1$ many Woodin cardinals" +$``\a'$ is the least Woodin cardinal",\\
(B2) $\a'$ is an $fb$-cut in $\M^\T_b$, and\\\\
setting $\K={\sf{StrLe}}(\M^\T_b|\b')$, $\mathcal{E}=\mathcal{E}_{sm}^{\K|\a'}$ and for $i\in [1, 2n]$, letting $\xi_i$ be the $i+1$st Woodin cardinal of $\K$ and $d=(\xi_1, \xi_2, ..., \xi_{2n})$ the following holds:\\\\
(C2) If $p\in {\sf{Ea}}^{\K}_{\a', \mathcal{E}}$ then $\K\models p\forces \exists v(\psi'_{\R_u[{\sf{ea}}], d}[k, z, {\sf{ea}}, v])$.\\\\
Let then $\K={\sf{StrLe}}(\M^\T_b|\b')$. Because \\\\
(a) $z'$ satisfies all the axioms of ${\sf{Ea}}^{\c(\T)}_{\d(\T), \mathcal{E}'}$ where $\mathcal{E'}=\mathcal{E}^{\c(\T)}_{le}$, and \\ 
(b) $\a'$ is an $fb$-cut of $\M^\T_b$\\\\
(D2) $z'$ satisfies all the axioms of ${\sf{Ea}}^{\K}_{\a', \mathcal{E}}$. \\

Let $\xi<\lh(\T)$ be the least such that $\M_\xi^\T|\a'=\M^\T_b|\a'$ (notice that because $\a'<\d(\T)$, we really do have such a $\xi<\lh(\T)$). It follows that\\\\
(E2) the generators of $\T\rest \xi$ are contained in $\a'$.\\\\
(F2) if $y$ is a real coding $\T\rest \xi+1$ then $\Sigma(\T\rest \xi+1, y)={\sf{accept}}$ (see  \rprop{correct branch}). \\\\
 (E2) and (F2) then imply the following.\\\\
(G2) If player $I$ plays $(\T\rest \xi+1, y)$ where $y\in \bR$ is any real coding $\T\rest \xi+1$, $\Sigma(\T\rest \xi+1, y)={\sf{accept}}$ and iterations of $\K[z']$ that are above $\a'$ are legal moves for player $I$ in the second round of $\mathcal{G}(\R_u, 0, 2n+1)$.\\\\
As in the case of (Pos1), (A2)-(G2) imply that (1)-(3) of \rsublem{cut back} hold.
\end{proof}
Let then $\xi$, $d=(\xi_1, \xi_2, ..., \xi_{2n})$ and $\K$ be as in the conclusion of \rsublem{cut back}. We now want to conclude that $\psi[k, z, z']$ holds. Let $v\in \K[z']$ be such that $\K[z']\models \psi'_{\K[z'], d}[k, z, z', v]$. It is then enough to show that $\psi'[k, z, z', v]$ holds, and this will be established by \rsublem{psi holds}.\\
 
 \begin{sublemma}\label{psi holds}
 $\psi'[k, z, z', v]$ holds.
 \end{sublemma}
 \begin{proof}
 $\psi'[k, z, z', v]$ is a $\Pi^1_{2n+1}$-formula, so we can find a $\Sigma^1_{2n}$ formula $\psi''$ such that $\psi'[k, z, z', v]\iff \forall t\psi''[k, z, z', v, t]$. Fix $t\in \bR$. We now want to argue that $\psi''[k, z, z', v, t]$ holds. Let $I$'s first move in  $\mathcal{G}(\R, 0, 2n+1)$ be $(\T\rest \xi+1, t')$ where $t'$ is a real coding $t$ and $\T\rest \xi+1$. As discussed above, $II$ must accept $\T\rest \xi+1$.  
 
 Notice now that $\K$ as a $\K|\xi_1$-mouse is $2n-1$-small. Working inside $\M_{2n-1}(t')$, let $\U$ be an iteration of $\K$ such that 
 \begin{enumerate}
 \item $\U$ is a $(t, \xi_2, \xi_1)$-genericity iteration,
 \item for each limit ordinal $\l<\lh(\U)-1$, if $c=[0, \l)_{\U}$ then $\Q(c, \U)$ exists and is $\Q(c, \U\rest \l)\insegeq \M_{2n-2}(\c(\U\rest \l))$.
 \end{enumerate}
As always we have two possibilities: either\\\\
(1) $\U$ has a last model $\K_1$ such that $t$ is generic over $\K_1$ for ${\sf{Ea}}^{\K_1}_{\pi^{\U}(\xi_2), \xi_1}$ or\\
(2) $\U$ is of limit length and there is no branch $c$ of $\U$ such that  $\Q(c, \U)\insegeq \M_{2n-2}(\c(\U))$\footnote{Here we should say that ``there is no branch $c\in \M_{2n-1}(t')$ such that..." but this is not necessary as if there was such a branch then it had to be in $\M_{2n-1}(t')$.}.\\\\
We now let $I$ play $\U$ in the second round of $\mathcal{G}(\R, 0, 2n+1)$. If $II$ accepts it then let $\K_1$ be the last model of $\U$. If $II$ plays a maximal well-founded branch $c$ then let $\K_1=\M^{\U\rest \sup(c)}_c$\footnote{While we do not need this, it can be argued, using the minimality of $\K$, that $II$ cannot play a branch.}. 

Suppose for a moment that $II$ plays a branch $c$. Set $\iota=\sup(c)$. We want to argue that $\Q(c, \U\rest \iota)$ doesn't exists. If it does then it is $2n-2$-small $\d(\U\rest \iota)$-mouse which is $\Pi^1_{2n}$-iterable above $\d(\U\rest \iota)$. It follows from clause 11 of \rrev{pi iterability} that $\Q(c, \U\rest \iota)$ is iterable and hence, $\Q(c, \U\rest \iota)\insegeq \M_{2n-2}(\c(\U\rest \iota))$, which is a contradiction. 

Let now $j:\K\rightarrow \K_1$ be the iteration map given by $\pi^{\U}$ or $\pi^{\U\rest \iota}_{c}$ depending on which case was used to define $\K_1$. In both of these cases we have that $t$ is generic over $\K_1$ for ${\sf{Ea}}^{\K_1}_{j(\xi_2), \xi_1}$. Let $d'=(j(\xi_2), j(\xi_3), ..., j(\xi_{2n}))$. We have that $\K_1[z', t]\models \psi''_{\K_1[z', t], d'}[k, z, z', v, t]$. Moreover, $\K_1$ above $j(\xi_2)$ is $2n-2$-small and $\Pi^1_{2n}$-iterable. It follows from clause 11 of \rrev{pi iterability} that $\K_1$ is iterable above $j(\xi_2)$ and hence, applying \rprop{correctness} to $\K_1[z', t]$, we get that $\psi''[k, z, z', v, t]$ holds. 
  \end{proof}
 \end{proof}
 
 \subsection{Hjorth's reflection}\label{sec:href}
 
Recall that $w$ is $\Pi^1_{2n+3}(z)$ but not $\Sigma^1_{2n+3}(z)$, and since $w'$ is $\Sigma^1_{2n+3}(z)$, we have that $w-w'$ is not empty. Fix now $k\in w-w'$ and  let  $\a<\b<\k_\M$ be the least such that 
 \begin{enumerate}
 \item $\M|\b\models {\sf{ZFC}}$+``there are $2n+1$ Woodin cardinals",
 \item $\M|\b\models ``\a$ is a Woodin cardinal",
 \item $\a$ is both an inaccessible $fb$-cut of  $\M$ and a cutpoint of $\M$,
 \item letting $(\a, \tau_1, ..., \tau_{2n})$ be the Woodin cardinals of $\M|\b$ enumerated in the increasing order, $d=(\tau_1, ..., \tau_{2n})$ and $\K=\sf{StrLe}(\M|\b)$, whenever $q\in {\sf{Ea}}^{\K}_{\a, sm}$, $\K\models q\forces \exists v\psi'_{\M|\b, d}[k, z, {\sf{ea}}, v]$.
 \end{enumerate}
 Notice that to get $\b<\k_\M$ we are using that $\M\models {\sf{Cond}}$. Also, notice that if $\N$ is a complete iterate of $\M$ then setting $\pi_{\M, \N}(\a, \b)=(\iota_\N, \zeta_\N)$, we have that $(\iota_\N, \zeta_\N)$ has the same definition over $\N$ as $(\a, \b)$ over $\M$. 
Set $\Q=\M|(\zeta_\M^+)^\M$\footnote{Notice that $(\iota_\M^+)^\M=(\zeta_\M^+)^\M$.}. Given a complete iterate $\N$ of $\M$ we let $\Q_\N=\pi_{\M, \N}(\Q)$. Recall that we set $B_\gg=\{ y: U_y=A_\gg\}$. To implement Hjorth's reflection we will need the following general lemma which is based on \cite{PWIM}.

\begin{lemma}\label{complexity} Suppose $\R$ is a complete iterate of $\M_{2n+1}$ such that $\lh(\T_{\M_{2n+1}, \R})<\omega_1$. Let $\a\leq \k_\R$ be a cutpoint of $\R$ and set $\S=\R|(\a^+)^\R$. Let $x\in \bR$ code $\S$. Then the following statements hold:
\begin{enumerate}
\item The statement that $``y$ codes a complete iterate of $S$" is $\Sigma^1_{2n+2}(x)$.
\item The statement that $``y$ codes a pair $(\N, \xi)$, $y'$ codes a pair $(\N', \xi')$, $\N$ and $\N'$ are complete iterates of $\S$ and if $\N''$ is the common complete iterate of $\N$ and $\N'$ then $\pi_{\N, \N''}(\xi)\leq \pi_{\N', \N''}(\xi')$" is $\Sigma^1_{2n+2}(x)$. 
\end{enumerate}
\end{lemma}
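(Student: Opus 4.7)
Since $\a \le \k_\R < \d_\R$, the premouse $\S = \R|(\a^+)^\R$ has no Woodin cardinals, and $\a$ is a cutpoint of $\S$. These properties are preserved under iteration, so every iterate $\N$ of $\S$ also has no Woodin cardinals and a cutpoint at the image of $\a$. Consequently, at every limit $\l < \lh(\T)$ of an iteration tree $\T$ on $\S$, the correct branch $b$ is identified, as in the analysis behind \rprop{correct branch}, by the requirement that $\Q(b, \T\rest\l)$ (or $\M^{\T\rest\l}_b$ itself, when no proper $\Q$-structure exists) be an initial segment of $\M_{2n}(\c(\T\rest \l))$. Since $\Pi^1_{2n+2}$-iterability of an $\M_{2n}^\#$-like premouse is a $\Pi^1_{2n+2}$ condition (clause 8 of \rsec{pi iterability}), witnessing such correctness by exhibiting a $\Pi^1_{2n+2}$-iterable $\M_{2n}^\#$-like certificate is a $\Sigma^1_{2n+2}$ assertion.

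For clause (1), I would express ``$y$ codes a complete iterate of $\S$'' as: there exist reals $\T$ and $w$ such that $\T$ codes a countable non-dropping iteration tree on $\S$ whose last model is coded by $y$; $w$ codes a $\Pi^1_{2n+2}$-iterable $\M_{2n}^\#$-like premouse over a real encoding $\S$ together with $\T$; and at every limit $\l < \lh(\T)$ the $\Q$-structure of $[0,\l)_\T$ coincides with an initial segment of $\M_{2n}(\c(\T\rest \l))$ as computed inside $w$. The outer quantifier is $\exists^{\bR}$ over the pair $(\T, w)$, the iterability of $w$ contributes $\Pi^1_{2n+2}$, and the remaining clauses are arithmetic in the codes; hence the overall complexity is $\Sigma^1_{2n+2}(x)$.

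For clause (2), I would proceed analogously, quantifying over reals $\T, \T', \U, \U', w$ such that, writing $y$ for the code of $(\N, \xi)$ and $y'$ for that of $(\N', \xi')$: $\T$ (resp.\ $\T'$) is a correct iteration of $\S$ producing $\N$ (resp.\ $\N'$); $(\U, \U')$ is a successful coiteration of $\N$ with $\N'$ of countable length---guaranteed by the ${\sf{Comparison\ Theorem}}$, \cite[Theorem 3.11]{OIMT}---with common last model $\N''$; $\pi^\U(\xi) \le \pi^{\U'}(\xi')$; and $w$ is a single correctness certificate (a $\Pi^1_{2n+2}$-iterable $\M_{2n}^\#$-like premouse over a real encoding all of $\S, \T, \T', \U, \U'$) verifying the branch-identification condition at every limit stage of the four trees simultaneously. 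The final inequality is arithmetic in the codes, so the total complexity is again $\Sigma^1_{2n+2}(x)$.

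The main technical obstacle is to set up the certificate $w$ in a uniform manner: a single $\Pi^1_{2n+2}$-iterable $\M_{2n}^\#$-like premouse should simultaneously certify correctness at every limit stage of every tree involved. This is handled by taking $w$ to code $\M_{2n}^\#$ over a single real that encodes all the trees in question, and then recovering the required $\Q$-structures from the $\M_{2n}(\cdot)$-construction internal to $w$, with the branch-identification analysis of \rprop{correct branch} guaranteeing uniqueness of the identified branches.
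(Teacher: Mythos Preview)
Your overall strategy is the right one and matches the paper's: express ``$y$ codes a complete iterate of $\S$'' as the existence of a tree $\T$ whose branches at limit stages are certified by the condition $\Q(b,\T\rest\l)\insegeq \M_{2n}(\c(\T\rest\l))$. However, your complexity count is off by one level, and this is exactly the point of the lemma.

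You propose to certify correctness by exhibiting a real $w$ coding a $\Pi^1_{2n+2}$-iterable $\M_{2n}^\#$-like premouse and then computing the $\Q$-structures inside $w$. But ``$w$ is $\Pi^1_{2n+2}$-iterable'' is a $\Pi^1_{2n+2}$ condition (clause 8 of \rsec{pi iterability}), so $\exists w\,[\Pi^1_{2n+2}(w)\wedge\text{arithmetic}]$ is $\Sigma^1_{2n+3}$, not $\Sigma^1_{2n+2}$. Your sentence ``the iterability of $w$ contributes $\Pi^1_{2n+2}$, \dots\ hence the overall complexity is $\Sigma^1_{2n+2}(x)$'' is simply wrong: an existential real quantifier in front of a $\Pi^1_{2n+2}$ matrix yields $\Sigma^1_{2n+3}$.

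The paper avoids this by not using an $\M_{2n}^\#$-level certificate at all. Instead it observes that each $\Q(b,\T\rest\l)$ is a $2n$-small $\d(\T\rest\l)$-sound mouse, and for such mice $\Pi^1_{2n+1}$-iterability already implies full iterability (this is the content of the footnote citing \cite[Lemma 3.3]{PWIM}; equivalently, via \cite[Corollary 4.9]{PWIM}, a real coding the $\Q$-structure lies in the largest countable $\Sigma^1_{2n+2}$ set over the relevant parameter). Thus the branch-correctness clause is $\Pi^1_{2n+1}$ (or at worst $\Sigma^1_{2n+2}$) in the code $u$ for $\T$, and the outer $\exists u$ then gives $\Sigma^1_{2n+2}(x)$ as required. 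The fix to your argument is therefore to drop the single global certificate $w$ and instead assert, for each limit $\l$, that the already-present $\Q$-structure $\Q([0,\l)_\T,\T\rest\l)$ is $\Pi^1_{2n+1}$-iterable; this is $\Pi^1_{2n+1}$ uniformly in $u$, and the same adjustment works for clause (2).
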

\begin{proof}
To say that $``y$ codes a complete iterate of $\S$" it is enough to say the following:
\begin{enumerate}
\item $y$ codes a premouse $\N$,
\item there is a real $u$ such that $u$ codes an iteration $\T$ of $\S$ such that 
\begin{enumerate}
\item $\T$ has a last model $\N$,
\item $\pi^\T$ is defined,
\item for every limit $\a<\lh(\T)$, letting $b=[0, \a)_\T$, $\Q(b, \T\rest \a)$ exists and $\Q(b, \T\rest \a)\inseg \M_{2n}(\c(\T\rest \a))$. 
\end{enumerate}
\end{enumerate}
The complexity of the statement comes from clause 2.3, and \cite[Corollary 4.9]{PWIM} implies that it is $\Sigma^1_{2n+2}(u)$. The reason is that for any $v\in \bR$, $\bR\cap \M_{2n+2}(v)$ is the largest countable $\Sigma^1_{2n+2}(v)$ set\footnote{Another way of seeing this is just that if for every $x\in \bR$, $\M_{2n}(x)^\#$ exists then if $\W$ is a $2n$-small $\Pi^1_{2n+1}$-iterable $\d$-mouse (in the sense of \cite[]{PWIM}) then $\W$ is $\omega_1+1$-iterable. This can be shown by appealing to \cite[Lemma 3.3]{PWIM}.}. It is then not hard to see that $``y$ codes a complete iterate of $\S$" is indeed $\Sigma^1_{2n+2}(x)$.
A very similar calculation shows that the statement in clause 2 is also $\Sigma^1_{2n+2}(x)$.
\end{proof}

Recall that \rlem{stable p} says that for every $\gg<\d^1_{2n+2}$ there is a $\gg$-stable complete iterate $\N$ of $\M$. Before we state and prove Hjorth's Reflection argument, we will need the following lemma.
\begin{lemma}\label{fact we need} There is a complete $\gg$-stable iterate $\N$ of $\M$ such that if $\P={\sf{StrLe}}(\N)$ then $\P$ is a $\gg$-stable complete iterate of $\M$. 
\end{lemma}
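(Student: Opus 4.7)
The plan reduces the lemma to a single technical observation, namely that $\P := {\sf{StrLe}}(\N, z)$ is a complete iterate of $\N$ inside the directed system $\mathcal{F}_\M$, equipped with a canonical iteration embedding $\pi_{\N, \P} : \N \to \P$. Granted this, the $\gamma$-stability of $\N$ propagates to $\P$ automatically, and no further iteration beyond Lemma~\ref{stable p} is needed.

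In detail, using Lemma~\ref{stable p}, fix a complete $\gamma$-stable iterate $\N$ of $\M$ and set $\P := {\sf{StrLe}}(\N, z)$. To see that $\P$ is $\gamma$-stable, let $\Q$ be an arbitrary complete iterate of $\P$. Then $\Q$ is also a complete iterate of $\N$, via $\pi_{\N, \Q} = \pi_{\P, \Q} \circ \pi_{\N, \P}$, and the $\gamma$-stability of $\N$ yields
\[
\pi_{\mH^\Q, \infty}\bigl(\pi_{\N, \Q}(\xi_{\mH^\N})\bigr) \; = \; \gamma.
\]
Specializing to $\Q = \P$ and using injectivity of $\pi_{\mH^\P, \infty}$ forces $\xi_{\mH^\P} = \pi_{\N, \P}(\xi_{\mH^\N})$, so $\P$ is locally $\gamma$-stable; the displayed identity for a general $\Q$ then becomes exactly the definition of $\gamma$-stability of $\P$.

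The main obstacle is the opening claim: that ${\sf{StrLe}}(\N, z)$ sits in $\mathcal{F}_\M$ as a complete iterate of $\N$, with a canonical embedding $\pi_{\N, \P}$ compatible with the directed system. The analogue of this fact for countable iterates is clause 7 of Review~\ref{s constructions}, and the plan is to adapt that argument to our class-sized $\N$. Specifically, one runs the comparison of $\P$ and $\N$ (working inside $\N$) and invokes stationarity of the fully backgrounded construction to conclude that the $\P$-side of the comparison is trivial, so that $\P$ is reached from $\N$ by a normal non-dropping iteration below the least Woodin cardinal; the key input is that $\M_{2n+1}^\#(z)|\delta_\N$ is generic over $\P$, which identifies the two sides. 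The induced iteration embedding is then $\pi_{\N, \P}$, and the argument above closes.

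The step I expect to require the most care is verifying the comparison/stationarity argument for our class-sized $\N$, since clause 7 of Review~\ref{s constructions} is stated for countable iterates; but this is routine given the machinery already reviewed in that clause, and once it is in place the conclusion follows immediately from the $\gamma$-stability of $\N$ provided by Lemma~\ref{stable p}.
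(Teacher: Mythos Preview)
There is a genuine gap in your proposal. Your central claim---that $\P$ is a complete iterate of $\N$ in $\mathcal{F}_\M$, with a canonical map $\pi_{\N,\P}$---does not follow from the adaptation of clause~7 of \rrev{s constructions} that you sketch. That clause shows ${\sf{StrLe}}(\M_n(x),z)$ is a complete iterate of a mouse $\S$ \emph{provided a real recursive in $x$ codes $\S$}: the point is that the background universe $\M_n(x)$ is strictly larger than $\S$, and the comparison of $\S$ with the construction is carried out \emph{inside $\M_n(x)$}, using the iterability of $\S$ available there. In your setup you propose to compare $\N$ with $\P$ ``working inside $\N$'', but $\N$ has no access to its own iteration strategy and cannot iterate itself. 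What the stationarity argument does give (comparing $\M_{2n+1}^\#(z)\in\N$ with the background construction inside $\N$) is only that $\P$ is a complete iterate of $\M$; it says nothing about $\N\leq_\M\P$. Both $\N$ and $\P$ sit in $\mathcal{F}_\M$ as iterates of $\M$, but there is no reason one should lie above the other. Your remark that ``$\M_{2n+1}^\#(z)|\delta_\N$ is generic over $\P$'' is a garbled version of the genericity of $\N|\delta_\N$ over $\P$, and that fact alone does not produce an iteration map $\N\to\P$.

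The paper's proof sidesteps exactly this obstruction by first passing to an auxiliary mouse $\R=\M_{2n+1}(\M,\N')$ built over (a real coding) an already $\gg$-stable $\N'$. Now clause~7 applies cleanly with background model $\R$: since $\R$ codes $\N'$, the $S$-constructions extracted from $\R$ (and then from $\N$) are complete iterates of $\N'$, and $\gg$-stability is inherited. The extra step of manufacturing a larger background $\R$ that \emph{codes} the $\gg$-stable mouse is precisely what your argument is missing; without it, there is no model in which one can run the comparison needed to exhibit $\P$ as an iterate of something $\gg$-stable.
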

\begin{proof} Let $\N'$ be any $\gg$-stable complete iterate of $\M$ and set $\R=\M_{2n+1}(\M, \N')$. Let $\N={\sf{StreLe}}(\R)$ and let $\P={\sf{StreLe}}(\N)$. Then both $\N$ and $\P$ are complete iterates of both $\M$ and $\N'$. Therefore, both $\N$ and $\P$ are $\gg$-stable. 
\end{proof}
\begin{lemma}[Hjorth's Reflection]\label{hjorths trick} Suppose $\gg<\d^1_{2n+2}$ and $\N$ is a $\gg$-stable complete iterate of $\M$ such that ${\sf{StrLe}}(\N)$ is also a complete $\gg$-stable iterate of $\M$. Then whenever $g\subseteq Coll(\omega, \Q_\N)$ is $\N$-generic, $B_\gg\cap \N[g]\not=\emptyset$. 
\end{lemma}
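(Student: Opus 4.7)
The plan is to argue by contradiction: suppose $B_\gg\cap\N[g]=\emptyset$ and manufacture, inside $\N[g]$, a real in $B_\gg$ by exploiting the forcing property (4) from the definition of $(\a,\b)$. Concretely, we transport $(\a,\b)$ from $\M$ to $\N$, use the collapse generic $g$ to make the relevant forcings countable, and then take a product-extender-algebra generic over an appropriate initial segment of $\N$ whose right-side second coordinate is forced to code $A_\gg$.

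Set $(\iota_\N,\zeta_\N)=\pi_{\M,\N}(\a,\b)$ and $\K_\N={\sf{StrLe}}(\N|\zeta_\N)$; enumerate the Woodin cardinals of $\K_\N$ as $\iota_\N=\xi_0<\xi_1<\cdots<\xi_{2n}$ and put $d_\N=(\xi_1,\ldots,\xi_{2n})$. By elementarity of $\pi_{\M,\N}$, the $\N$-analogue of property (4) gives that every $q\in {\sf{Ea}}^{\K_\N}_{\iota_\N,\mathcal{E}^{\K_\N}_{sm}}$ forces over $\K_\N$ that $\exists v\,\psi'_{\N|\zeta_\N, d_\N}[k,z,{\sf{ea}},v]$. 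Because $g$ collapses $\Q_\N=\N|(\zeta_\N^+)^\N$, both $\K_\N$ and the product extender algebra ${\sf{Ea}}^{\K_\N}_{\iota_\N,\mathcal{E}^{\K_\N}_{sm}}\times {\sf{Ea}}^{\K_\N}_{\iota_\N,\mathcal{E}^{\K_\N}_{sm}}$ become countable in $\N[g]$; by Hjorth's product lemma (\rrev{ea}) this product is $\iota_\N$-cc, so any generic filter containing a prescribed condition exists in $\N[g]$.

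Next, I use $\gg$-stability of $\P={\sf{StrLe}}(\N)$ together with the internalization of the directed system (\rrev{gamma stable}, clause 8) to make the predicate ``$w$ codes a pair $(\R,\tau)$ with $\gg_{(\R,\tau)}=\gg$'' first-order over $\P$. Choose external witnesses $w_0^*\in A_\gg$ and $(w_1^*,w_2^*,w_3^*)\in V$ with $D(w_1^*,w_2^*,w_3^*)$, $\gg_{w_1^*}=\gg$ and $w_2^*\in B_\gg$ (existing by non-emptiness of $A_\gg$, the Coding Lemma, and the definition of $D^*$). Apply \rlem{existence of conditions} to an $\M_{2n+1}^\#$-like substructure of $\K_\N$ to obtain $(p,q)$ in the product extender algebra with $w_0^*\models p$, $(w_1^*,w_2^*,w_3^*)\models q$, and $(p,q)\forces_{\K_\N} D({\sf{ea}}^r_1,{\sf{ea}}^r_2,{\sf{ea}}^r_3)\wedge U({\sf{ea}}^l,{\sf{ea}}^r_2)$. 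Strengthen $q$ to $q'\leq q$ additionally forcing $\gg_{{\sf{ea}}^r_1}=\gg$, using the first-order definability of this predicate just obtained.

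In $\N[g]$, pick a $\K_\N$-generic filter $(G^l,G^r)\ni(p,q')$, yielding reals $(v_0,v_1,v_2,v_3)\in\N[g]$. By the forcing theorem the internal statements $D_{\K_\N[(G^l,G^r)],d_\N}(v_1,v_2,v_3)$ and $U_{\K_\N[(G^l,G^r)],d_\N}(v_0,v_2)$ hold, and applying \rprop{correctness1} to $\K_\N$ (whose $\Pi^1_{2n+2}$-iterability above $\iota_\N$ is inherited from $\N$'s full iterability via universality of the backgrounded construction, clause 11 of \rrev{s constructions}, and clauses 11--13 of \rsec{pi iterability}) these transfer to $V$, giving $D(v_1,v_2,v_3)$ and $U(v_0,v_2)$. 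Since $\gg_{v_1}=\gg$ by the strengthened condition, $v_2\in B_{\gg_{v_1}}=B_\gg$, and as $v_2\in\N[g]$ this contradicts our assumption. The principal obstacle will be justifying the internal-to-external transfer of the forced $\Pi^1_{2n+2}$-statements at the Woodin-cardinal configuration $d_\N$, which requires verifying $\K_\N$ has the requisite $\Pi^1_{2n+2}$-iterability with correctness bootstrapping through \rprop{correctness1}; a related delicate point is the construction of $q'$, which depends essentially on the hypothesis that both $\N$ and $\P={\sf{StrLe}}(\N)$ are $\gg$-stable, so that the directed-system internalization operates uniformly inside $\P$ in the presence of the product-extender-algebra generic.
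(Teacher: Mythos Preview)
Your approach has a genuine gap, and it diverges substantially from the paper's argument.

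\textbf{The gap.} The crucial step is producing a condition $q'\in {\sf Ea}^{\K_\N}_{\iota_\N,\mathcal{E}^{\K_\N}_{sm}}$ that simultaneously forces $D({\sf ea}^r_1,{\sf ea}^r_2,{\sf ea}^r_3)$ and pins the ordinal $\gg_{{\sf ea}^r_1}$ to $\gg$. Your appeal to \rlem{existence of conditions} does not produce such a condition: that lemma \emph{assumes} the witnessing reals $w_0^*,(w_1^*,w_2^*,w_3^*)$ are already generic over the structure for the extender algebra in question. Your external witnesses are arbitrary reals in $V$; they are not generic over $\K_\N$ (or over $\N$) for ${\sf Ea}^{\K_\N}_{\iota_\N}$ without first iterating. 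Once you iterate to make them generic, the conditions you find live in the extender algebra of the \emph{iterate}, not in ${\sf Ea}^{\K_\N}_{\iota_\N}$, and there is no way to pull them back to a condition in $\N[g]$ (the iterate and the iteration map are not in $\N[g]$). Without such a $q'$, you cannot run the generic-filter argument in $\N[g]$. A secondary confusion: the ``property (4) from the definition of $(\a,\b)$'' that you invoke concerns $\psi'$ and the real $w$; it has nothing to do with $D$, $U$, or $B_\gg$, so it does not help manufacture an element of $B_\gg$.

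\textbf{How the paper proceeds instead.} Rather than directly constructing a real in $B_\gg$ via a single generic, the paper shows that $A_\gg$ itself admits a $\Sigma^1_{2n+2}$ definition from any real coding $\Q_\N$. It does this by defining, in $\N$, a set $\mathcal{A}$ of quadruples $(\a',\b',q,r)$ recording local witnesses (with $r$ compatible with the $(\gg,0,3,1)$-master condition $p^{{\sf StrLe}(\N)}_{\gg,0,3,1}$ of ${\sf StrLe}(\N)$), and then proving (\rsublem{sub1}) that $x\in A_\gg$ iff there is a complete iterate $\S$ of $\N$ below $\iota_\N$ and some $(\a',\b',q,r)\in\pi_{\N,\S}(\mathcal{A})$ with $x\models q$ and $x$ generic for the associated small extender algebra. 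The forward direction does iterate to absorb external witnesses, but the point is that the resulting description of $A_\gg$ quantifies only over iterates of $\Q_\N$, which is $\Sigma^1_{2n+2}$ in a code of $\Q_\N$ by \rlem{complexity}. The universal set $U$ then yields a code $y'$ for $A_\gg$ recursive in a code of $\Q_\N$, and $y'\in B_\gg\cap\N[g]$. The $\gg$-stability of ${\sf StrLe}(\N)$ is used precisely to make the master-condition clause in $\mathcal{A}$ correctly track $\gg$ across iterates---this is where your hypothesis enters, not in internalizing the directed system into a small substructure like $\K_\N$.
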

\begin{proof}
 Set $p=p^{{\sf{StrLe}}(\N)}_{\gg, 0, 3, 1}$ (see clause 9 of \rrev{gamma stable}, here we define $p^{{\sf{StrLe}}(\N)}_{\gg, 0, 3, 1}$ relative to $\mathcal{E}^{{\sf{StrLe}}(\N)}_{sm}$). Let $\mathcal{A}\in \N$ consist of quadruples $(\a', \b', q, r)$ such that
\begin{enumerate}
\item $\a'\leq \iota_\N$, $\N|\b'\models {\sf{ZFC}}+``\a'$ is the least Woodin cardinal"+``there are $2n+1$ many Woodin cardinals",
\item  $\a'$ is an $fb$-cut of $\N$, and\\\\
 letting $\K={\sf{StrLe}}(\N|\b')$ and $\mathcal{E}=\mathcal{E}^{\K}_{sm}$, there is $(q, r)\in {\sf{Ea}}^{\K}_{\a', \mathcal{E}}\times {\sf{Ea}}^{\K}_{\a', \mathcal{E}}$,
\item $r$ is compatible with $p$,
\item $(q, r)\forces \exists b( U'_{\K[{\sf{ea}}^l, {\sf{ea}}^r], d}({\sf{ea}}^r_2, {\sf{ea}}^l, b))$ where letting $(\a', \xi_1, ...\xi_{2n})$ be the Woodin cardinals of $\K$ enumerated in increasing order, $d=(\xi_1, \xi_2,..., \xi_{2n})$. 
\end{enumerate}

\begin{sublemma}\label{sub1} Suppose $x\in \bR$. Then the following are equivalent.
\begin{enumerate}
\item $x\in A_\gg$.
\item There is a complete iterate $\S$ of $\N$ such that $\T_{\N, \S}$ is below $\iota_\N$  and for some $(\a', \b', q, r)\in \pi_{\N, \S}(\mathcal{A})$,
$x\models q$ and letting $\K={\sf{StrLe}}(\S|\b')$ and $\mathcal{E}=\mathcal{E}^\K_{sm}$, $x$ is generic over ${\sf{Ea}}^{\K}_{\a', \mathcal{E}}$.
\end{enumerate}
\end{sublemma}
\begin{proof}
(\textbf{Clause 1 implies Clause 2:}) Suppose $x\in A_\gg$. Let $x_1\in {\sf{Code}}$ be such that $\gg_{x_1}=\gg$, $x_2\in B_\gg$ and $x_3$ be such that $D(x_1, x_2, x_3)$.  Let $\P=\M_{2n+1}(z, x, (x_1, x_2, x_3))$. Notice that if $u$ is a real coding $\P$ then we in fact have that $\theta_0(u, x, x_1, x_2, x_3)$ holds.

We now compare $\P$ and $\N$ to obtain $\P'$ and $\N'$ such that \\\\
(a) the least Woodin cardinals of $\P'$ and $\N'$ coincide,\\
(b) if $\k$ is the least Woodin cardinal of $\P'$ and $\N'$ then the fully backgrounded construction of $\P|\k$ coincides with the fully backgrounded construction of $\N'|\k$\footnote{The reason that the $\P'$ and $\N'$ have the same Woodin cardinal is that if $\Q$ is some $\M_{2n+1}$-like premouse over some set $a$ then the least Woodin cardinal of $\Q$ is the least Woodin cardinal of the fully backgrounded construction. If now for example the least Woodin cardinal $\eta$ of $\P'$ is strictly smaller than the least Woodin cardinal $\nu$ of $\N'$ then because $\M_{2n}({\sf{Le}}(\P'|\eta)\models ``\eta$ is a Woodin cardinal" we have that $\eta$ must be a Woodin cardinal of ${\sf{Le}}^{\N'|\nu}$, contradiction.}\\\\
Let $i: \P\rightarrow \P'$ be the iteration embedding. \\\\
\textit{Claim.}
 $ i(\a_s)\leq \iota_{\N'}$.\\\\
 \begin{proof}
 Suppose that $i(\a_s)>\iota_{\N'}$. Because the fully backgrounded constructions of $\P'$ and $\N'$ are the same and because $\iota_{\N'}$ is an $fb$-cut of $\N'$, we have that $\iota_{\N'}$ is an $fb$-cut of $\P'$.  It now follows that $\theta(k, u, x, x_1, x_2, x_3)$ holds, and therefore, $k\in w'$.
  \end{proof}
  
Now, set $\K={\sf{StrLe}}(\N'|i(\b_s))={\sf{StrLe}}(\P'|i(\b_s))$ and $\mathcal{E}=\mathcal{E}^{\K|i(\a_s)}_{sm}$. Let $(q, r)\in {\sf{Ea}}^{\K}_{i(\a_s), \mathcal{E}}\times {\sf{Ea}}^{\K}_{i(\a_s), \mathcal{E}}$ be such that
\begin{enumerate}
\item $x\models q$,
\item $(x_1, x_2, x_3)\models r$,
\item $(q, r)\forces \exists b( U'_{\K[{\sf{ea}}^l, {\sf{ea}}^r], d}({\sf{ea}}^r_2, {\sf{ea}}^l, b))$ where letting $(\a_s, \xi_1,..., \xi_{2n})$ be the Woodin cardinals of $\K$ enumerated in increasing order, $d=(\xi_1, \xi_2,..., \xi_{2n})$. 
\end{enumerate}
Assuming that $\pi_{\N, \N'}(p)$ is compatible with $r$, we get that $(i(\a_s), i(\b_s), q, r)\in \pi_{\N, \N'}(\mathcal{A})$. To finish we need to see that in fact
$\pi_{\N, \N'}(p)$ is compatible with $r$.  Notice now that because $\gg_{x_1}=\gg$, $(x_1, x_2, x_3)\models \pi_{\N, \N'}(p)$\footnote{We have that $(x_1, x_2, x_3)$ is generic over ${\sf{StrLe}}(\P')$ and since ${\sf{StrLe}}(\P')={\sf{StrLe}}(\N')$, we have that $(x_1, x_2, x_3)\models \pi_{\N, \N'}(p)$.}.
Because $(x_1, x_2, x_3)\models r$, we must have that $r$ is compatible with $\pi_{\N, \N'}(p)$. Thus, $(i(\a_s), i(\b_s), q, r)\in \pi_{\N, \N'}(\mathcal{A})$. 

Set now $\T=\T_{\N, \N'}$ and $\tau=\lh(\T)-1$. Because $\iota_\N$ is a cutpoint of $\N$, we can find $\xi<\lh(\T)$ such that 
\begin{enumerate}
\item $\M_\xi^\T|\iota_{\N'}=\N'|\iota_{\N'}$,
\item the generators of $\T\rest \xi$ are contained in $\iota_{\N'}$,
\item $\xi\in [0, \tau]_{\T}$,
\item all the extenders used in $[0, \tau)_\T$ after stage $\xi$ have critical points $>\iota_\N$. 
\end{enumerate}
Set $\S=\M_\xi^\T$. We then have that $\pi_{\N, \N'}(\mathcal{A})=\pi_{\N, \S}(\mathcal{A})$, and the pair $(\T\rest \xi+1, (i(\a_s), i(\b_s), q, r))$ witnesses clause 2 of \rlem{sub1}.\\

(\textbf{Clause 2 implies Clause 1.}) Conversely, suppose $\S$ is a complete iterate of $\N$ such that $\T_{\N, \S}$ is based on $\Q_\N$ and for some $(\a', \b', q, r)\in \pi_{\N, \S}(\mathcal{A})$,
$x\models q$ and setting $\K={\sf{StrLe}}(\S|\b')$ and $\mathcal{E}=\mathcal{E}^{\K}_{sm}$, $x$ is generic over ${\sf{Ea}}^{\K}_{\a', \mathcal{E}}$. We now further iterate $\S$ above $\a'$ to get $\S'$ such that $x$ is generic over ${\sf{Ea}}^{\S'}_{\d_{\S'}, \a'}$\footnote{This is the extender algebra that uses extenders with critical points $>\a'$.}. We then have that $x$ is actually generic over $\P=_{def}{\sf{StrLe}}(\S')$ (see Clause 2 of \rrev{s constructions}). It follows that there is a $\P[x]$-generic $(x_1, x_2, x_3)$ such that $(x_1, x_2, x_3)\models r\wedge \pi_{\N, \S'}(p)$. Therefore, we must have that $\gg_{x_1}=\gg$ and $x_2\in B_\gg$. If we now show that $U(x_2, x)$ holds then we will get that  $x\in A_\gg$.

To show that $U(x_2, x)$ holds it is enough to show that $(x_1, x_2, x_3)$ is generic over $\K[x]$ for ${\sf{Ea}}^{\K}_{\a', \mathcal{E}}$. Assuming this, we have that since
\begin{center} $(q, r)\forces \exists b( U'_{\K[{\sf{ea}}^l, {\sf{ea}}^r], d}({\sf{ea}}^r_2, {\sf{ea}}^l, b))$ \end{center} and $\K[x, (x_1, x_2, x_3)]$ is $\Sigma^1_{2n+2}$ correct, in fact $\exists b U'(x_2, x, b)$ holds in $V$. Therefore, $U(x_2, x)$ holds. 

The fact that $(x_1, x_2, x_3)$ is generic over $\K[x]$ for ${\sf{Ea}}^{\K}_{\a', \mathcal{E}}$ follows from the Claim in the proof of \cite[Theorem 2.2]{Hjorth}. To apply that Claim, set $g$ to be $(x_1, x_2, x_3)$, $\psi_0$ to be $r\wedge \pi_{\N, \S}(p)$ and $y$ to be $x$. Here we use the fact that $\a'$ is an $fb$-cut of $\S'|\iota_{\S'}$ and $\iota_{\S'}$ is an $fb$-cut of $\S'$, implying that in fact $\a'$ is an $fb$-cut of $\S'$. Thus, $\K|\a'\inseg \P$ and $\s'$ is a cardinal of $\P$.
\end{proof}
To finish the proof, notice that if $g\subseteq Coll(\omega, (\iota_\N^+)^\N)$ is generic over $\N$, $y\in \N[g]$ codes $\Q_\N$ and $\sigma$ is the formula displayed in clause 2 of \rsublem{sub1} then in fact $A_\gg\in \Sigma^1_{2n+2}(y)$ as witnessed by $\sigma$ (here we use clause 1 of \rlem{complexity}). Therefore, we can find $y'$ that is Turing reducible to $y$ and such that $u\in A_\gg \iff U(y', u)$. Hence, we have that $y'\in B_\gg\cap \N[g]$.
\end{proof}

\subsection{Removing the use of Kechris-Martin}\label{remove km}
Recall that $\Q=\M|(\iota_\M^+)^\M$ (where $\iota_\M, \zeta_\M$ were introduced in the begining of \rsec{sec:href}) and if $\S$ is a complete iterate of $\M$ then let $\Q_\S=\pi_{\M, \S}(\Q)$. We have that $\Q\inseg \M|\k_\M$ and for each $\gg$ there is a complete iterate $\S$ of $\M$ such that $\T_{\M, \S}$ is below $\iota_\M$ and if $g\subseteq Coll(\omega, \pi_{\M, \S}(\Q))$ is generic over $\S$ then $\S[g]\cap B_\gg\not=\emptyset$.  Fix some recursive enumeration $(\phi_e: e\in \omega)$ of recursive functions. 


Fix $\gg<\d^1_{2n+2}$ and let $\S$ be a complete $\gg$-stable iterate of $\M$. Let $g\subseteq Coll(\omega, \Q_\S)$ be $\S$-generic, $a_g=\{(i, j)\in \omega^2: g(i)\in g(j)\}$ and $e$ be such that if $v=\phi_e(a_g)$ then $U_v=A_\gg$. Let $\dot{\mathbb{P}}$ be a $Coll(\omega, \Q_\S)$-name for ${\sf{Ea}}^{\S[g]}_{\d_\S}$ and let  $\dot{s}\in \S$ be a $Coll(\omega, \Q_\S)$-name for $p_{\gg, 0, 3, 1}^{\S[g]}$, the $(\gg, 0, 3, 1)$-master condition in $\S[g]$ (see clause 9 of \rrev{gamma stable}). It follows from clause 12 of \rsec{pi iterability} that \\\\
(*) if $(\dot{s}, \dot{q})\in \dot{\mathbb{P}}\times \dot{\mathbb{P}}$ then $\S[g]\models (\dot{s}_g, \dot{q}_g)\forces ``D({\sf{ea}}^r_1, {\sf{ea}}^r_2,{\sf{ea}}^r_3) \rightarrow (U_v= U_{{\sf{ea}}^r_2})"$. \\\\
Let $\dot{a}$ be $Coll(\omega, \Q_\S)$-name for $a_g$. It follows from (*) that there is a condition $p\in Coll(\omega, \Q_\S)$ and $e\in \omega$ such that\\\\
(**) whenever $\dot{q}\in \dot{\mathbb{P}}$, $\S\models (p, \dot{s}, \dot{q})\forces ``D({\sf{ea}}^r_1, {\sf{ea}}^r_2,{\sf{ea}}^r_3) \rightarrow (U_{\phi_e(\dot{a})}= U_{{\sf{ea}}^r_2})"$.\\\\
 Let $( p_\gg^\S, e^\S_\gg)$ be the lexicographically least pair $(p, e)$ satisfying (**). 

Suppose now that $\S'$ is a complete iterate of $\S$. Then because $\S$ is $\gg$-stable we have that $\pi_{\S, \S'}(p_\gg^\S, e^\S_\gg)=(p_\gg^{\S'}, e^{\S'}_{\gg})$. Set then $(p_{\gg, \infty}, e_{\gg, \infty})=\pi_{\S, \infty}(p_\gg^\S, e_\gg^\S)$.

We now claim that if $\gg\not=\gg'$ then $(p_{\gg, \infty}, e_{\gg, \infty})\not =(p_{\gg', \infty}, e_{\gg', \infty})$. Suppose to the contrary that $(p_{\gg, \infty}, e_{\gg, \infty})=(p_{\gg', \infty}, e_{\gg', \infty})$. Let $\S$ be a complete iterate of $\M$ which is both $\gg$ and $\gg'$ stable. It is enough to show that $(p^{\S}_{\gg}, e_\gg^\S)\not=(p_{\gg'}^\S, e_{\gg'}^\S)$. Suppose then that $(p^{\S}_{\gg}, e_\gg^\S)=(p_{\gg'}^\S, e_{\gg'}^\S)=_{def}(p, e)$. Let $g\subseteq Coll(\omega, \Q_\S)$ be $\S$-generic such that $p\in g$ and let $v=\phi_e(a_g)$. Let $s\in {\sf{Ea}}^{\S[g]}_{\d_\S}$ be the $(\gg, 0 , 3, 1)$-master condition and let $s'\in {\sf{Ea}}^{\S[g]}_{\d_\S}$ be the $(\gg', 0, 3, 1)$-master condition.

 Because $A_\gg\not =A_{\gg'}$, we can, without losing generality, assume that there is $x\in A_\gg-A_{\gg'}$. Fix such an $x$. 

We now have that\\\\
(***) there is a condition $(r, r')\in {\sf{Ea}}^{\S[g]}_{\d_\S}\times {\sf{Ea}}^{\S[g]}_{\d_\S}$ such that $(r, r')$ extends $(s, s')$ and 
\begin{center}
$\S[g]\models (r, r')\forces D({\sf{ea}}^l_1, {\sf{ea}}^l_2,{\sf{ea}}^l_3)\wedge D({\sf{ea}}^r_1, {\sf{ea}}^r_2,{\sf{ea}}^r_3)$.
\end{center}
(***) can be shown by iterating $\S[g]$ to make two triples $(y_1, y_2, y_3)$ and $(y_1', y_2', y_3')$ generic where the triples are chosen so that
\begin{enumerate}
\item $D(y_1, y_2, y_3)$ and $D(y'_1, y'_2, y'_3)$,
\item $\gg_{y_1}=\gg$ and $\gg_{y_1'}=\gg'$. 
\end{enumerate}
Then elementarity would imply (***).  Fix such a condition $(r, r')$. 

Suppose $\R$ is a complete iterate of $\S[g]$ such that $x$ is generic for ${\sf{Ea}}^{\R[g]}_{\d_\R}$ over $\R[g]$.
Let $(x_1, x_2, x_3)$ be any $\R[g, x]$-generic for ${\sf{Ea}}^{\R[g]}_{\d_\R}$ with the property that $(x_1, x_2, x_3)\models \pi_{\S[g], \R[g]}(r)$ and let $(x_1', x_2', x_3')$ be any $\R[g, x]$-generic for ${\sf{Ea}}^{\R[g]}_{\d_\R}$ with the property that $(x_1', x_2', x_3')\models  \pi_{\S[g], \R[g]}(r')$. It follows from (***) that \begin{enumerate}
\item $D(x_1, x_2, x_3)$ and $D(x_1', x_2', x_3')$,
\item $\gg_{x_1}=\gg$ and $\gg_{x_1'}=\gg'$. 
\end{enumerate} 
We now have that (**) implies that\\\\
(1) if $q\in {\sf{Ea}}^{\R[g]}_{\d_\R}$ then $\R[g, (x_1, x_2, x_3)]\models q\forces ``U_v=U_{x_2}"$, \\
(2) if $q\in {\sf{Ea}}^{\R[g]}_{\d_\R}$ then $\R[g, (x_1', x_2', x_3')]\models q\forces ``U_v=U_{x_2'}"$, \\\\
Therefore, we get that\\\\
(3) $\R[g, (x_1, x_2, x_3)][x]\models ``U_v=U_{x_2}"$ and $\R[g, (x_1', x_2', x_3')][x]\models  ``U_v=U_{x_2'}"$.\\\\
We now use (3) and clause 12 of \rsec{pi iterability} to make the following implications showing that $x\in A_{\gg'}$ (recall that we chose $x\in A_\gg-A_{\gg'}$):
\begin{align*}
U_{x_2}=A_\gg &\rightarrow x\in (U_{x_2})^{\R[g, x, (x_1, x_2, x_3)]}\\
&\rightarrow  x\in (U_v)^{\R[g, x, (x_1, x_2, x_3)]}\\
& \rightarrow x\in U_v\\
& \rightarrow x\in (U_v)^{\R[g, x, (x_1', x_2', x_3')]}\\
& \rightarrow x \in (U_{x_2'})^{\R[g, x, (x_1', x_2', x_3')]}\\
&\rightarrow x\in U_{x_2'}\\
&\rightarrow x\in A_{\gg'}
\end{align*}
Since we now have that the function $f(\gg)=(p_{\gg, \infty}, e_{\gg, \infty})$ is an injection, $\card{\pi_{\M, \infty}(\Q)}=\d^1_{2n+2}$.

 However, because ${\sf{Ord}}\cap \Q$ is a cutpoint of $\M$, $\pi_{\M, \infty}(\Q)$ is the direct limit of all countable iterates of $\Q$, which we denoted by $\M_\infty(\Q)$ (see clause 3 of \rrev{ds}). It then follows from clause 2 of \rlem{complexity} that $\card{\M_\infty(\Q)}<\delta^1_{2n+2}$. Indeed, consider the set $E=\{ u\in \bR: u$ codes a pair $(\R_u, \iota_u)$ such that  $\R_u$ is a complete iterate of $\Q$ and $\iota_u\in {\sf{Ord}}\cap \R_u\}$ and for $u, u'\in E$, set $u\leq^*_0u'$ if and only if letting $\R$ be the complete common iterate of $\R_u$ and $\R_{u'}$, $\pi_{\R_u, \R}(\iota_u)\leq \pi_{\R_{u'}, \R}(\iota_{u'})$. It now follows from clause 2 of \rlem{complexity} that $\leq^*_0$ is $\Sigma^1_{2n+2}(u_0)$ where $u_0$ codes $\Q$. Hence, the ordinal length of $\leq^*_)$ is $<\d^1_{2n+2}$\footnote{Here we are using Kunen-Martin theorem, see \cite[Theorem 2.6]{Jackson}.}, and therefore $\card{\M_\infty(\Q)}<\d^1_{2n+2}$.

\bibliographystyle{plain}
\bibliography{HjorthsRef}
\end{document}